\theoremstyle{plain}
\newtheorem{theorem}{Theorem}[section]
\newtheorem{defin}[theorem]{Definition}
\newtheorem{prop}[theorem]{Proposition}
\newtheorem{lemma}[theorem]{Lemma}
\newtheorem{remark}[theorem]{Remark}
\newcommand{\dd}{\delta_p}
\newcommand\numberthis{\addtocounter{equation}{1}\tag{\theequation}}
\newcommand{\phinsig}{\phi_{e^{-t\sigma}, t}^{(n)}}
\newcommand{\Rp}{\mathbb{R}_{>0}}
\newcommand{\Z}{\mathbb{Z}}
\newcommand{\Zn}{\Z_{\geq 0}}
\newcommand{\Zp}{\Z_{\geq 1}}
\newcommand{\phf}{p^{\text{hf}}}
\newcommand{\II}{\mathcal{I}}
\newcommand{\IIt}{\widetilde{\mathcal{I}}}
\newcommand{\KK}{\mathcal{K}}
\newcommand{\JJ}{\mathcal{J}}
\newcommand{\R}{\mathbb{R}}
\newcommand{\RL}{\R^L}
\newcommand{\row}{\mathsf{r}}
\newcommand{\Be}{\mathsf{B}}
\newcommand{\M}{\mathfrak{M}}
\newcommand{\lt}{\exp\big(-s \ZZhf(2t, 0) e^\frac{t}{12}\big)}
\renewcommand{\tt}{t}
\newcommand{\Ai}{\text{Ai}}
\renewcommand{\AA}{\mathcal{A}_p}
\newcommand{\MMone}{\mathsf{M}_1}
\newcommand{\MMtwo}{\mathsf{M}_2}
\newcommand{\AAA}{\mathsf{A}} 
\newcommand{\AAp}{\mathcal{A}'_p}
\newcommand{\AApp}{\mathcal{A}''_p}
\newcommand{\AAone}{\AAA_1}
\newcommand{\AAtwo}{\AAA_2}
\newcommand{\D}{\mathsf{D}} 
\newcommand{\f}{F}
\newcommand{\pfll}{\text{Pf}\big[K(x_i, x_j)\big]_{i, j = 1}^L}
\newcommand{\ft}{\f_{\frac{1}{3}, 2}}
\newcommand{\ftt}{\f_{\frac{2}{3}, 2}}
\newcommand{\BB}{\mathcal{B}}
\newcommand{\BBpL}{\BB_{p, L}}
\newcommand{\As}{\mathsf{E}_{1}}
\newcommand{\Ass}{\mathsf{E}_{2}}
\newcommand{\qq}{\mathsf{q}}
\newcommand{\C}{C}
\newcommand{\EEp}{\mathsf{E}'}
\newcommand{\UU}{U}
\newcommand{\VV}{V}
\newcommand{\GOE}{F_{\text{GOE}}}
\newcommand{\HHhf}{\mathcal{H}^{\text{hf}}}
\newcommand{\ZZhf}{\mathcal{Z}^{\text{hf}}}
\begin{document}
	\numberwithin{equation}{section}
	\title{Lyapunov exponents of the half-line SHE}
	\author{Yier Lin}
	\address{Y.\ Lin,
		Department of Mathematics, Columbia University,
		2990 Broadway, New York, NY 10027}
	\email{yl3609@columbia.edu}
	\begin{abstract}
		We consider the half-line stochastic heat equation (SHE) with Robin boundary parameter $A = -\frac{1}{2}$.
		Under narrow wedge initial condition, we compute every positive (including non-integer) Lyapunov exponents of the half-line SHE. As a consequence, we prove a large deviation principle for the upper tail of the half-line KPZ equation under Neumann boundary parameter $A = -\frac{1}{2}$ with rate function $\Phi_+^{\text{hf}} (s) = \frac{2}{3} s^{\frac{3}{2}}$. This confirms the prediction of  \cite{KLD18, MV18} for the upper tail exponent of the half-line KPZ equation. 
	\end{abstract}
	\maketitle
	\section{Introduction}
 In this paper, we study the half-line KPZ equation, namely the KPZ equation on $\mathbb{R}_{\geq 0}$, with Neumann boundary parameter $A$. Introduced in \cite{CS18}, the equation is formally written as  
\begin{equation}\label{eq:hfkpz}
\begin{cases}
&\partial_t \HHhf (t, x) = \frac{1}{2} \partial_{xx} \HHhf (t, x) + \frac{1}{2} (\partial_{x} \HHhf (t, x))^2 + \xi(t, x),\\
&\partial_x \HHhf (t, x)\Big|_{x = 0} = A,
\end{cases}
\end{equation}
where $\xi(t, x)$ is the Gaussian space time white noise. The solution theory of \eqref{eq:hfkpz} is ill-posed due to the non-linearity and the space-time white noise. One way to properly define the solution is to consider the \emph{Hopf-Cole solution} $\HHhf(t, x) := \log\ZZhf (t, x)$ where $\ZZhf$ solves the half-line stochastic heat equation (SHE) with Robin boundary parameter $A$, i.e.
\begin{equation*}
\begin{cases}
&\partial_t \ZZhf (t, x) = \frac{1}{2} \partial_{xx} \ZZhf (t, x) + \frac{1}{2} \ZZhf (t, x) \xi(t, x),\\
&\partial_x \ZZhf (t, x)\Big|_{x = 0} = A \ZZhf (t, 0).
\end{cases}
\end{equation*}
We say $\ZZhf$ is a solution to the half-line SHE if for every $t> 0$, $\ZZhf(t, \cdot)$ is adapted to the sigma algebra generated by $\ZZhf(0, \cdot)$ and the space-time white noise up to time $t$, and satisfies the mild formulation 
\begin{equation*}
\ZZhf(t, x)  = \int_{\mathbb{R}_{\geq 0}} p^{\text{hf}}_t(x, y) \ZZhf(0, y) dy + \int_0^t \int_{\mathbb{R}_{\geq 0}} p_{t-s}^{\text{hf}}(x, y) \mathcal{Z}^{\text{hf}}(s, y) dyds,
\end{equation*} 
For fixed $x \in \mathbb{R}_{\geq 0}$, $\phf_t (x, y)$ satisfies the half-line heat equation $\partial_t \phf_t (x, y) = \frac{1}{2} \partial_{yy} \phf_t (x, y)$ for all $y > 0$ with boundary condition $\phf_0 (x, y) = \delta_{x} (y)$ and $\partial_x \phf(t, 0) = -A \phf(t, 0)$. \cite{CS18} proves the existence, uniqueness and positivity of $\ZZhf$ for non-negative boundary parameter $A$ and later \cite{Par19} extends these results to the scope of all $A \in \mathbb{R}$. As a consequence,  the Hopf-Cole solution $\HHhf(t, x) = \log \ZZhf(t, x)$ is well-defined. Note that the solution to \eqref{eq:hfkpz} can also be formulated in other different but equivalent ways, see \cite{GPS17, GH19}.
\bigskip
\\
The half-line KPZ equation plays an important role characterizing how the surface grows subject to a boundary. In addition, it is the (weak) scaling limit of various half space models lying in the half-space KPZ universality \cite{Wu18,CS18,Par19a}. Interestingly, such half-space random growth models usually exhibit a phase transition depending on the strength of repulsion/attraction at the boundary, which is characterized by the boundary parameter. Such phase transition is related to wetting/depinning transition which goes back to \cite{Kar85, Kar87} and was proved for various discrete half-space models \cite{BR01,IS04,BBCS18}.  
\bigskip
\\
For the half-line KPZ equation, we restrict ourselves in a particular initial condition called \emph{narrow wedge initial condition}, which corresponds to setting $\ZZhf(0, x)$ to be a Dirac-delta function at zero. It is widely believed that the fluctuation of $\HHhf(2t, 0)$ at late time exhibits a phase transition at $A = -\frac{1}{2}$ \cite[Conjecture 1.2]{Par19}. More precisely, the fluctuation of $\HHhf(2t, 0)$ will be Gaussian/Tracy-Widom GOE/GSE \cite{TW96} depending on whether the boundary parameter $A$ is smaller than, equal to or larger than $-\frac{1}{2}$. 
\bigskip
\\
\cite[Theorem 1.1]{Par19} (also see \cite[Remark 1.1]{BBCW18}) shows that when $A = -\frac{1}{2}$, for all $s \in \mathbb{R}$,
\begin{equation}\label{eq:clt}
\lim_{t \to \infty} \mathbb{P}\Big(\frac{\HHhf(2t, 0) + \frac{t}{12}}{t^{\frac{1}{3}}} \leq s\Big) = \GOE(s),
\end{equation} 
where $\GOE(s)$ is distribution function of the Tracy-Widom GOE distribution \cite{TW96}. The key ingredient to arriving at \eqref{eq:clt} is the exact formula for the Laplace transform of $\ZZhf(2t, 0) + \frac{t}{12}$ developed in \cite{BBCW18, Par19}, see Theorem \ref{thm:exactformula}.  The conjectured Gaussian/GSE fluctuation for $A < -\frac{1}{2}$ and $A > -\frac{1}{2}$ are supported in a non-rigorous way by the works \cite{GLD12,BBC16,DJKPT19, KLD20}. In the GSE region $A > -\frac{1}{2}$, an exact formula of the Laplace transform of $\ZZhf(2t, 0)$ is also conjectured in the aforementioned works. 
\bigskip
\\
In this paper, we focus on the critical regime $A = -\frac{1}{2}$.
\bigskip
\\
Having considered the limit theorem \eqref{eq:clt}, it is natural to think about the large deviation principle (LDP), i.e. the probability that $\HHhf(2t, 0) + \frac{t}{12}$ deviates from zero in a size of $t$, as $t \to \infty$. It is expected that for $s > 0$,
\begin{align}\tag{lower tail}
&-\lim_{t \to \infty} \frac{1}{t^2} \log \mathbb{P}\Big(\HHhf(2t, 0) + \frac{t}{12} < -st\Big) = \Phi_{-}^{\text{hf}} (s)\\
\tag{upper tail}
&-\lim_{t \to \infty} \frac{1}{t} \log \mathbb{P}\Big(\HHhf(2t, 0) + \frac{t}{12}  > st\Big) = \Phi^{\text{hf}}_+ (s).
\end{align} 
Note that the upper and lower tail LDP have different speeds ($t$ vs $t^2$). One way to explain such phenomenon is to view $\HHhf(2t, 0)$ as the free energy of a half-space continuum directed random polymer with a wall at $x = 0$. For various discrete/continuum polymers, the $t$ vs $t^2$ phenomenon is observed  and explained in \cite{LDMS16, BGS17, DT19}. Here, let we provide a different explanation. If we replace $s$ with $t^{\frac{2}{3}} s$ in \eqref{eq:clt}, the right hand side of \eqref{eq:clt} becomes $\GOE(t^{\frac{2}{3}} s)$. Since Tracy-Widom GOE distribution has left and right tail: as $s \to \infty$
$\GOE (-s) \sim \exp(-\frac{s^3}{24}), 1-\GOE(s) \sim \exp(-\frac{2}{3} s^{\frac{3}{2}})$, see \cite{TW09}. Hence,
we recover the $t^2$ and $t$ speed of LDP for the lower and upper tail. \cite[Corollary 1.3]{Tsai18} proves the LDP for the lower tail 
and identifies the rate function $\Phi_-^{\text{hf}}(s)$.
\bigskip
\\
In this paper, we prove that the upper tail LDP holds with $\Phi_+^{\text{hf}} (s) = \frac{2}{3} s^{\frac{3}{2}}$. Note that this is the first rigorous result concerning the upper tail LDP of the \emph{half-space models} in the KPZ universality class. The $\frac{3}{2}$-exponent for the upper tail also arises in the work of \cite{KLD18, MV18}, where the LDP for half-line KPZ equation at short time was studied.
\bigskip
\\
The upper tail LDP of the half-line KPZ equation is closely related to the \emph{Lyapunov exponent} of the half-line SHE.  More precisely, for $p \in \mathbb{R}_{> 0}$ we call the $p$-th Lyapunov exponent of the SHE to be limit of $t^{-1} \log\mathbb{E}\big[\ZZhf(2t, 0)^p\big]$ as $t \to \infty$. If such limit exists for every $p$, in the spirit of G\"{a}rtner-Ellis theorem, $\HHhf(2t, 0) = \log\ZZhf(2t, 0)$ satisfies a LDP with rate function to be the Legendre-Fenchel transform of the Lyapunov exponents (as a function of  $p$). We remark that the Lyapunov exponents also capture the nature of intermittency, which is a universal property for the random fields with multiplicative noise and has been studied extensive in the literature \cite{GM90, CM94, GKM07, FK09, CJK13, CJKS13, CD15, Chen15, BC16, KKX17}.
\subsection{Main result and proof idea}
\label{sec:mainresult}
From now on, we use $\ZZhf (2t, 0)$ to denote the solution to the half-line SHE with Robin boundary parameter $A = -\frac{1}{2}$ and Dirac-delta initial data $\ZZhf (0, x) = \delta_{x = 0}$. 
Our main contribution is rigorously computing the Lyapunov exponents of the half-line SHE.
\begin{theorem}[Lyapunov exponents and upper tail LDP]\label{thm:main}
	We have
	\begin{enumerate}[leftmargin = 20pt, label = (\roman*)]
		\item \label{item:thm1} For every $p \in \mathbb{R}_{>0}$, one has
		$
		\lim_{t \to \infty} \frac{1}{t} \log \mathbb{E}\big[\ZZhf(2t, 0)^p \exp({\frac{pt}{12}})\big] = \frac{p^3}{3}.
		$
		\item \label{item:thm2} For every $s \in \mathbb{R}_{>0}$, one has the upper tail LDP:
		$
		-\lim_{t \to \infty} \frac{1}{t}\log \mathbb{P}\big(\HHhf(2t, 0) + \frac{t}{12} > st\big) = \Phi_+^{\text{hf}} (s) = \frac{2}{3} s^{\frac{3}{2}}.
		$
	\end{enumerate}
\end{theorem}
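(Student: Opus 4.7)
My strategy is to establish (i) via asymptotic analysis of the exact Laplace transform formula (Theorem~\ref{thm:exactformula}) and then deduce (ii) via the Legendre--Fenchel / Gärtner--Ellis machinery. Write $X_t := \ZZhf(2t,0)\,e^{t/12}$, so that $\mathbb{E}[X_t^p] = \mathbb{E}[\exp(p(\HHhf(2t,0)+\tfrac{t}{12}))]$ is the object in (i). The main tool is the Mellin--Barnes identity
\[
\mathbb{E}[X_t^p] \;=\; \frac{p}{\Gamma(1-p)}\int_0^\infty s^{-p-1}\bigl(1 - \mathbb{E}[e^{-sX_t}]\bigr)\,ds \qquad (p \in (0,1)),
\]
with Taylor subtractions of the first $\lfloor p\rfloor+1$ coefficients of $\mathbb{E}[e^{-sX_t}]$ at $s=0$ to extend to general $p>0$ (low-order integer moments being accessible separately, e.g.\ via Bethe-ansatz type formulas for the half-line SHE). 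Inserting the Fredholm Pfaffian expression for $\mathbb{E}[e^{-sX_t}]$ and rescaling $s = e^{-t\tau}$, the integral becomes of exponential order in $t$. A steepest-descent analysis of the Pfaffian kernel, with the critical value of $\tau$ determined by balancing $p\tau$ against the exponential rate of $1-\mathbb{E}[e^{-sX_t}]$, should pin down the Lyapunov exponent at $p^3/3$. The matching upper bound comes from a Hadamard-type estimate on the Pfaffian; the lower bound comes from restricting to its leading Airy-like contribution at the critical scaling.

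\textbf{Part (ii).} Given (i), the LDP upper bound is immediate from Chebyshev: for any $p>0$,
\[
\mathbb{P}\bigl(\HHhf(2t,0) + \tfrac{t}{12} > st\bigr) \;\leq\; e^{-pst}\,\mathbb{E}[X_t^p] \;=\; e^{-(ps - p^3/3 + o(1))t},
\]
and optimizing over $p$ yields $-\liminf_{t\to\infty} t^{-1}\log \mathbb{P}(\cdots) \geq \sup_{p>0}(ps - p^3/3) = \tfrac{2}{3}s^{3/2}$, attained at $p_* = \sqrt{s}$. For the matching lower bound I would run the standard Cramér / exponential-tilting argument: introduce the tilted measure $d\mathbb{Q}_{p_*} := X_t^{p_*}/\mathbb{E}[X_t^{p_*}]\,d\mathbb{P}$; using (i) on a small interval $p_*\pm\delta$, $\log X_t/t$ concentrates under $\mathbb{Q}_{p_*}$ near $\Lambda'(p_*) = p_*^2 = s$, so reversing the tilt gives
\[
\mathbb{P}\bigl(\HHhf(2t,0) + \tfrac{t}{12} \in [(s-\varepsilon)t,(s+\varepsilon)t]\bigr) \;\geq\; e^{-p_*(s+\varepsilon)t}\,\mathbb{E}[X_t^{p_*}]\,\mathbb{Q}_{p_*}(\cdot) \;\geq\; e^{-(\tfrac{2}{3}s^{3/2}+o(1))t}.
\]

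\textbf{Main obstacle.} The technical heart of the argument is the saddle-point analysis of the Fredholm Pfaffian in part (i), uniformly in $p$ on compact subsets of $\Rp$. At criticality $A=-\tfrac{1}{2}$, the kernel combines a GOE-like diagonal component with wedge-like off-diagonal corrections, and locating an $L^2$ contour on which the rescaled kernel converges to an explicit limit with quantitative error control is delicate---precisely because this is the transition point between the GOE and GSE regimes. Non-integer $p$ rules out any direct combinatorial / replica-type shortcut, so one is committed to the Mellin--Barnes route, and the Taylor-subtraction error has to be controlled uniformly in $s\in(0,\infty)$ against the Pfaffian asymptotics. A secondary but manageable concern is ensuring that the convergence in (i) is strong enough to run the tilting argument in (ii); since $\Lambda(p)=p^3/3$ is smooth and strictly convex on $\Rp$, this should follow from (i) applied on a neighborhood of $p_*$ via elementary Cramér-type concentration estimates.
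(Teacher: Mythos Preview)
Your strategy is essentially the paper's: express $\mathbb{E}[X_t^p]$ via a Mellin--Laplace identity, insert the Fredholm Pfaffian expansion from Theorem~\ref{thm:exactformula}, isolate the $L=1$ term as the leading contribution via a Laplace/steepest-descent analysis of $K_{12}(x,x)$, and control the $L\geq 2$ terms by Hadamard-type bounds on the Pfaffian; then deduce (ii) by the Legendre transform (the paper invokes \cite[Proposition~1.12]{GL20}, which packages exactly your Chebyshev-plus-tilting argument).

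Two remarks on where your sketch deviates from what actually happens. First, the parenthetical suggestion that integer moments be handled ``separately, e.g.\ via Bethe-ansatz type formulas for the half-line SHE'' is a trap: those formulas (from \cite{BBC16}) are not rigorous, as the paper itself notes. Fortunately no separate treatment is needed---the paper uses the single identity $\mathbb{E}[X^{n-1+\alpha}]=\frac{(-1)^n}{\Gamma(1-\alpha)}\int_0^\infty s^{-\alpha}\partial_s^n\mathbb{E}[e^{-sX}]\,ds$ with $n=\lfloor p\rfloor+1$, which covers all $p>0$ uniformly and is exactly your ``Taylor subtraction'' written in differentiated form. Second, your ``main obstacle'' paragraph misdiagnoses the difficulty: at $A=-\tfrac12$ the kernel is the standard real GOE Airy kernel, there is no contour deformation, and the leading-term analysis is a direct real-variable Laplace estimate on $\int K_{12}(x,x)e^{pt^{1/3}x}\,dx$ using Airy asymptotics. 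The genuine technical heart is instead the control of the higher-order Pfaffian terms $\mathcal{B}_{p,L}$: a single Hadamard bound is not enough, and the paper needs \emph{two} Pfaffian estimates (Proposition~\ref{prop:pfbound})---one with slower growth in $L$ but weaker decay in $x$, one with faster growth in $L$ but sharper $e^{-\frac{2}{3}x^{3/2}}$ decay---applied on different ranges of $L$ to make the sum over $L\geq 2$ subdominant.
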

\begin{remark}
The above upper tail LDP rate function matches with the right tail of the GOE, which is the limiting distribution in \eqref{eq:clt}. Such matching between the upper tail LDP rate function of KPZ equation  and the right tail of the limiting Tracy GUE/GOE/Baik-Rains distribution has been predicted in \cite{LDMS16, LDMRS16, MS17} and has been confirmed in various situations \cite{DT19, GL20}.
\end{remark}
 Let us briefly explain the idea for the proof of Theorem \ref{thm:main}. A more detailed discussion will be given in Section \ref{sec:overview}. First of all, it is not hard to get Theorem \ref{thm:main} \ref{item:thm2} once we obtain  \ref{item:thm1}, see Proposition 1.12 of \cite{GL20}. Hence, we focus on the proof of \ref{item:thm1}.   One crucial input is the following exact formula obtained in \cite[Theorem 1.3]{Par19} (also see \cite[Theorem 7.6]{BBCW18}), which is stated as follows.
\begin{theorem}[Theorem 1.3 of \cite{Par19}, Theorem 7.6 of  \cite{BBCW18}] \label{thm:exactformula}
When $A = -\frac{1}{2}$, we have for all $s \geq 0$,
\begin{equation}\label{eq:exactformula}
\mathbb{E}\Big[\lt\Big] = \mathbb{E}\Big[\prod_{i=1}^{\infty} \frac{1}{\sqrt{1 + 4s \exp(t^{\frac{1}{3}} \mathbf{a_k})}}\Big],
\end{equation}
where $\mathbf{a}_1 > \mathbf{a}_2 > \cdots $ is the GOE-Airy point process defined in Definition \ref{def:GOE}. 
\end{theorem}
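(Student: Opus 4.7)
The plan is to establish \eqref{eq:exactformula} via discrete approximation: identify an integrable half-space model whose partition function converges to $\ZZhf(2t,0)e^{t/12}$ in a weak-asymmetry / weak-noise limit, prove the corresponding Laplace transform identity at the discrete level where algebraic machinery is available, and pass to the limit. Natural candidates are the half-space ASEP with open boundary used in \cite{BBCW18} or the half-space log-gamma polymer used in \cite{Par19}; both carry a Pfaffian Schur structure, and the critical boundary $A = -\frac{1}{2}$ should correspond to the canonical symmetric boundary condition under which the Pfaffian kernel remains non-degenerate in the limit.

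The first substantive step is, at the discrete level, to derive a Fredholm Pfaffian identity of the form
\[
\mathbb{E}_N\Big[\frac{1}{(-s_N\, Z_N;\, q_N)_\infty}\Big] \;=\; \mathrm{Pf}(J - K_N),
\]
where $Z_N$ is the discrete partition function, $(s_N, q_N)$ are tuned to the scaling, and $K_N$ is an explicit $2\times 2$ matrix kernel given by double contour integrals. This follows from Pfaffian Schur measure machinery together with Cauchy-type identities, leveraging the specific combinatorial structure of the half-space model. The $q$-Pochhammer on the left becomes an ordinary exponential under the scaling, matching the target Laplace transform on the left-hand side of \eqref{eq:exactformula}.

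Next I would carry out the scaling limit. As $\varepsilon \to 0$, the left-hand side converges to $\mathbb{E}\big[\lt\big]$ by the known weak KPZ convergence of $Z_N$ to $\ZZhf(2t,0)e^{t/12}$ and the uniform integrability of the Laplace functional (guaranteed by $|e^{-sY}| \leq 1$). On the Pfaffian side, a steepest-descent analysis of the double contour integral defining $K_N$ shows that $K_N \to K^{\mathrm{GOE\text{-}Airy}}$ in trace norm on an appropriate weighted $L^2$ space, so $\mathrm{Pf}(J - K_N) \to \mathrm{Pf}(J - K^{\mathrm{GOE\text{-}Airy}})$. Finally, applying the general identity $\mathbb{E}\big[\prod_k g(\mathbf{a}_k)\big] = \mathrm{Pf}(J - (1-g) K^{\mathrm{GOE\text{-}Airy}})$ with $g(x) = (1 + 4s e^{t^{1/3} x})^{-1/2}$ rewrites the limiting Fredholm Pfaffian as the product expectation appearing on the right-hand side of \eqref{eq:exactformula}.

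The main obstacle will be the trace-norm convergence of $K_N$. The boundary entry of the $2 \times 2$ kernel does not descend from the full-line KPZ analysis; it requires a separate steepest-descent argument, with contours chosen carefully near the critical saddle, and one must verify uniform decay of $K_N$ outside a neighborhood of the saddle to justify the Fredholm expansion via Hadamard-type bounds. These contour deformations interact with the boundary poles introduced precisely at the critical value $A = -\frac{1}{2}$, which is where the special symmetric structure needed to produce the GOE-Airy kernel (as opposed to, say, a GSE or crossover kernel) emerges. Once this convergence is in hand, the identification of the limit kernel and the conversion to the product form are more routine steps in the theory of Pfaffian point processes.
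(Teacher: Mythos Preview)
The paper does not prove this statement; it is quoted verbatim as an input from \cite{Par19} (Theorem 1.3) and \cite{BBCW18} (Theorem 7.6), and the entire remainder of the paper takes \eqref{eq:exactformula} as a black box. So there is no ``paper's own proof'' to compare against.

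That said, your sketch is a reasonable high-level summary of how the cited works actually establish the formula: \cite{BBCW18} starts from half-space ASEP, uses half-space Macdonald processes / Pfaffian Schur measures to derive a discrete Laplace-transform identity as a Fredholm Pfaffian, and then performs the steepest-descent asymptotics on the kernel; \cite{Par19} proceeds through the half-space log-gamma polymer instead. Your identification of the critical boundary $A=-\tfrac12$ with the symmetric point producing the GOE kernel, and of the $q$-Pochhammer $\to$ exponential mechanism on the observable side, are both correct. One small correction: the passage from the Fredholm Pfaffian to the product over the point process goes the other way in this paper's logic (see Lemma~\ref{lem:rains}), and the convergence is controlled in \cite{BBCW18} via pointwise kernel bounds plus dominated convergence on the Fredholm expansion rather than trace-norm convergence; trace-class structure for Pfaffian kernels is more delicate than in the determinantal case.
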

Our argument for proving Theorem \ref{thm:main} \ref{item:thm1} follows \cite{DT19} at the beginning. Firstly, we write the $p$-th moment of $\ZZhf(2t, 0)e^{\frac{t}{12}}$ in terms of the Laplace transform (Lemma \ref{lem:momentlaplace})
\begin{equation*}
\mathbb{E}\Big[\big(\ZZhf(2t, 0) e^{\frac{t}{12}}\big)^p \Big] = \frac{(-1)^n}{\Gamma(1-\alpha)}\int_0^{\infty} s^{-\alpha} \partial^n_s \Big(\mathbb{E}\big[\lt\big]\Big) ds, \qquad  n = \lfloor p \rfloor + 1, \alpha = p + 1 - n.  
\end{equation*}
Decompose the integral region into  $(0, 1]$ and $(1, \infty)$ and denote the latter integral by $\mathcal{R}_p (t)$, we get 
\begin{equation}\label{eq:temp8}
\mathbb{E}\big[\Big(\ZZhf(2t, 0) e^{\frac{t}{12}}\big)^p\Big] = \frac{(-1)^n}{\Gamma(1-\alpha)}\int_0^{1} s^{-\alpha} \partial^n_s \Big(\mathbb{E}\big[\lt\big]\Big) ds + \mathcal{R}_p(t)
\end{equation}
It turns out $\mathcal{R}_p (t)$ is uniformly bounded in $t$ so we only need to focus on the first term on the right hand side \eqref{eq:temp8}. The laplace transform $\mathbb{E}\big[\lt\big]$ in the above integral admits an explicit formula in terms of the GOE point process given by the right hand side of \eqref{eq:exactformula}. Rewrite the expectation of products of the GOE point process into a Fredholm Pfaffian (Lemma \ref{lem:rains}), we get
\begin{equation*}
\mathbb{E}\big[\exp(-s \ZZhf(2t, 0)) e^{\frac{t}{12}}\big] = 1 + \sum_{L=1}^\infty \frac{1}{L!}\int_{\mathbb{R}^L} \text{Pf}\big[K(x_i, x_j)\big]_{i, j=1}^L \prod_{i=1}^L \phi_{s, t} (x_i) dx_i,
\end{equation*} 
where the Pfaffian kernel $K$ is $2\times 2$ matrix defined in Definition \ref{def:GOE} and the function $\phi_{s, t}$ is specified in \eqref{eq:phist}. Inserting the above expression to the first term on the right hand side of \eqref{eq:temp8}, bringing the infinite summation over $L$ outside the derivative over $s$ and integral from $0$ to $1$ (which will be justified), we get
\begin{equation*}
\mathbb{E}\big[\Big(\ZZhf(2t, 0) e^{\frac{t}{12}}\big)^p\Big] = \sum_{L=1}^\infty \frac{(-1)^n}{\Gamma(1-\alpha) L!} \int_0^1 s^{-\alpha} 
\int_{\mathbb{R}^L}\text{Pf}\big[K(x_i, x_j)\big]_{i, j=1}^L \partial_s^n \Big(\prod_{i=1}^L \phi_{s, t} (x_i)\Big) dx_1 \dots dx_L + \mathcal{R}_p (t).
\end{equation*} 
The next step is to decompose the infinite summation in the above display into $L=1$ and $L \geq 2$. In particular, we denote the first term in the summation by $\mathcal{A}_p (t)$ and the $L$-th term $(L \geq 2)$ by $\mathcal{B}_{p, L} (t)$, then
\begin{equation*}
\mathbb{E}\Big[\big(\ZZhf(2t, 0) e^{\frac{t}{12}}\big)^p\Big] = \mathcal{A}_p (t) + \sum_{L=2}^{\infty} \mathcal{B}_{p, L}(t) + \mathcal{R}_p (t)
\end{equation*}
We call $\mathcal{A}_p (t)$ the \emph{leading order term} which will be shown to hold the dominating $t \to \infty$ asymptotic. In particular, it equals an integral of the $(1,2)$ entry of the $2 \times 2$ matrix $K(x, x)$ denoted by $K_{12}(x, x)$. The second term on the right hand side of the above display is composed of \emph{the higher order terms}. Each $\mathcal{B}_{p, L}(t)$ is related to an integral of $L$-th correlation function $\text{Pf}\big[K(x_i, x_j)\big]_{i, j =1}^L$ of the GOE point process. In our proof, 
we show in Proposition \ref{prop:mainprop} and \ref{prop:minor} that for fixed $p > 0$, as $t \to \infty$, $\mathsf{1):}$ $\mathcal{A}_p (t)$ grows  asymptotically as $\exp(p^3 t/3)$.  $\mathsf{2):}$ $\sum_{L=2}^\infty \big|\mathcal{B}_{p, L} (t)\big|$ is upper bounded by $\exp\big((p^3 - \delta_p)t/3\big)$  for some $\delta_p > 0$ . This demonstrates Theorem \ref{thm:main} \ref{item:thm1}.
\bigskip
\\
So far, we have followed the idea in \cite{DT19}. The analysis of the leading order term $\mathcal{A}_p (t)$ in Proposition \ref{prop:mainprop} involves a steepest descent type analysis of the integral of $K_{12}(x, x)$. The harder problem is to control the higher order terms. In the situation of \cite{DT19}, the authors deal with a Fredholm determinant $\det(I + A) = 1 + \sum_{L=1}^\infty \text{Tr}(A^{\wedge L})$,\footnote{There is a misstatement in page 4 of \cite{DT19} where an extra $L!$ appears in the definition of Fredholm determinant.} where $A$ is a positive, trace class operator. \cite{DT19} upper bounds the higher order term $\text{Tr}(A^{\wedge L})$  by $\big(\text{Tr} (A)\big)^L/L! $. This can be understood by setting $\{\lambda_i\}_{i=1}^\infty$ to be the eigenvalues of $A$, since $\lambda_i$ are all non-negative, 
$$\text{Tr}(A^{\wedge L}) = \sum_{1 \leq i_1 < \dots < i_L} \lambda_{i_1} \dots \lambda_{i_L} \leq \frac{1}{L!}\big(\sum_{i=1}^\infty \lambda_i\big)^L = \frac{1}{L!}\big(\text{Tr} (A)\big)^L.$$  
Unfortunately, we are unaware of an analogue for such bound in the case of Fredholm Pfaffian. Instead of mimicking \cite{DT19}, we adopt a more direct approach. Using Hadamard's inequality and a determinantal analysis, we obtain two upper bounds of  $\text{Pf}\big[K(x_i, x_j)\big]_{i, j=1}^L$ (Proposition \ref{prop:pfbound}). These upper bounds will be applied to control various terms in $\sum_{L=2}^{\infty} \mathcal{B}_{p, L}(t)$ depending on whether $L$ is greater than a fixed threshold. We want to highlight that we did not pursue to get the sharp bound of the $p$-th moment of $\ZZhf(2t, 0)$ which holds uniformly for large $p$ and $t$, as shown in \cite[Theorem 1.1 (a)*]{DT19}. However, it is sufficient to apply our method to  obtain the  Lyapunov exponents and LDP for our problem as well as for \cite[Theorem 1.1]{DT19}.
\subsection{Previous results}\label{sec:previousresult}
Recently, there has been significant progress in understanding the Lyapunov exponents and tails of various Stochastic PDEs, see \cite[Section 1.2]{GL20} and reference therein. Here, we restrict our discussion to the scope of the KPZ equation and the SHE.
\subsubsection{Full line KPZ equation/SHE}
The KPZ equation was introduced in \cite{KPZ86} as a paradigmatic model for the random surface growth. It is a representative of the \emph{KPZ universality class} \cite{ACQ11, Cor12}, a collection of models sharing the universal scaling exponent and large time scaling behavior.  Recently, the upper/lower tail LDP of the KPZ equation receives plenty of attention from the mathematics and physics community. In fact, there are two regimes for the LDP of the KPZ equation, long time and short time (Freidlin-Wentzell regime). We will focus on discussing the long time regime and for the latter situation, see the physics literature \cite{KK07,KK09,KMS16, MKV16,LDMRS16}. 
\bigskip
\\
The upper tail of the KPZ equation is closely connected to the Lyapunov exponents of SHE. \cite{BC95} first computes the integer Lyapunov exponents of the SHE. However, due to an incorrect use of Skorokhod’s lemma, their result is only valid for the second moment. By analyzing the Brownian local time from Feynman-Kac representation of the SHE, \cite{Chen15} obtained the integer Lyapunov exponents for the SHE under deterministic bounded initial data.   For the SHE under narrow wedge initial condition, the integer moment of the solution 
admits a contour integral formula \cite{BC14, Gho18}. By a residue calculus, \cite{CG20a} obtains the integer Lyapunov exponents, from which they obtain a bound for the upper tail of the KPZ equation, showing the correct exponent $3/2$. \cite{DT19} improves their result by identifying all positive real Lyapunov exponents of the SHE. As a consequence, they obtain the upper tail LDP of the KPZ equation with rate function $\frac{4}{3} s^{\frac{3}{2}}$. 
Recently,  \cite{GL20} is capable of computing all positive Lyapunov exponents for the SHE starting with a class of general (including random) initial data and obtain the corresponding upper tail LDP of the KPZ equation.
\bigskip
\\
Unlike the upper tail, the lower tail of the KPZ equation does not have a strong connection to the moment of SHE. For the narrow wedge initial condition, via a delicate analysis of the exact formula in \cite{BG16}, \cite{CG20b} derives a tight bound which detects the crossover of the tail exponent  from $3$ to $5/2$ depending on the depth of the tail, which was first observed in the physics work \cite{SMP17}. The LDP for the lower tail of the KPZ equation is obtained by \cite{Tsai18, CC19}. For the KPZ equation with general initial data, \cite{CG20b} obtains an upper bound for the lower tail probability. Besides that, very few things are known at present. 
\subsubsection{Half-line KPZ equation/SHE}
Compared with the knowledge for full-line equation, smaller amount of results are known for the half-line KPZ equation/SHE. 
\cite{CS18} proves that on a closed interval or a half line, the open ASEP weakly converges to the KPZ equation with Neumann boundary parameter $A \geq 0$. Such convergence was extended later by \cite{Par19} to all $A \in \mathbb{R}$. \cite{BBCW18, Par19} obtain the Laplace transform formula for the half-line SHE under narrow wedge initial condition when $A = -\frac{1}{2}$, which helps to capture the Tracy-Widom GOE fluctuation of the KPZ equation. 
As discussed before, there is a conjectured Gaussian-GOE-GSE phase transition the for half-line KPZ equation, which is only proved at the critical parameter $A = -\frac{1}{2}$. Recently, there are progress identifying new limiting distribution for the half-space KPZ equation under stationary initial data \cite{BKD20}. Such distribution is believed to be universal and arises in other half-space model starting from stationary initial data \cite{BFO20}.
\bigskip
\\
Regarding the tail of half-space KPZ equation, let us focus on $A = -\frac{1}{2}$ and narrow wedge initial condition. Results for other boundary parameters and initial conditions are fairly untouched for now. Under the aforementioned boundary condition, a tight estimate of the lower tail was obtained in \cite{Kim19}, which detects the similar crossover of the tail exponent that appears in the full-line situation. The LDP for the lower tail was obtained by \cite{Tsai18}. Few things were rigorously proved for the upper tail aside from the current work. \cite{BBC16} (also see \cite{BBC20}) obtains a moment formula of the half-line SHE by solving the delta-Bose gas (the result is not rigorous, since the uniqueness of the solution to the delta-Bose gas is unknown). It is also unclear whether one can extract the integer Lyapunov exponents  for the half-line SHE (thus obtaining tail bounds of the half-line KPZ equation) from a similar residue calculus of the integral formula as carried out in \cite{CG20a}, due to the extra complexity.
\bigskip
\\
On a different aspect, it is worth to mention the works of \cite{KLD18, MV18} in which the authors consider the LDP for the half-line KPZ equation in short time. The same exponent $3/2$ in the rate function is obtained therein. In addition, \cite{KLD18} predicted  the upper tail exponent to be $\frac{2}{3} x^{\frac{3}{2}}$ when $A = -\frac{1}{2}, 0$ and $\frac{4}{3} x^{\frac{3}{2}}$ when $A = +\infty$. For the future work, it is appealing to prove a LDP for the upper tail for general boundary parameter $A$ and see how the LDP rate function changes when $A$ belongs to the Gaussian/GSE regime.
\bigskip
\\
\textbf{Outline.} The rest of the paper is organized as follows. In section \ref{sec:overview}, we give an overview for the proof of the main theorem and provide more details for what is discussed in Section \ref{sec:mainresult}. In particular, we transform our problem into proving Proposition \ref{prop:mainprop} and \ref{prop:minor}. Section \ref{sec:Ap} was devoted to prove Proposition \ref{prop:mainprop}. In Section \ref{sec:prepare}, we provide two different upper bound for the pfaffian $\text{Pf}\big[K(x_i, x_j)\big]_{i, j =1}^L$ which is crucial to the proof of Proposition \ref{prop:minor}. We also justify Lemma \ref{lem:intdevinterchange} in that section. Section \ref{sec:Bp} completes the proof of Proposition \ref{prop:minor}.
\bigskip
\\
\textbf{Acknowledgment.} The author thanks Guillaume Barraquand, Ivan Corwin, Sayan Das and Li-Cheng Tsai for helpful discussions. The author was partially supported by the Fernholz Foundation's ``Summer Minerva Fellow" program and also received summer support from Ivan Corwin's
NSF grant DMS-1811143, DMS-1664650.
\section{Proof of Theorem \ref{thm:main}: A detailed overview}\label{sec:overview}
 In this section, we explain with more details how we arrive at proving Theorem \ref{thm:main}. We begin with the definition of GOE point process that was mentioned in the introduction. As formulated in \cite[Section 4.2.1]{AGZ10}, a point process on $\mathbb{R}$ is a random point configuration $\mathcal{X}$. The $L$-th correlation function $\rho_L$ w.r.t. the measure $\mu$ associated to $\mathcal{X}$ is defined in the way that for arbitrary distinct Borel sets  $B_1, \dots, B_L$,
\begin{equation*}
\int_{B_1 \times \dots \times B_L} \rho_L (x_1, \dots, x_L) d\mu^{\otimes L} = \mathbb{E}\Big[\#\big\{(x_1, \dots, x_L), \text{ such that } x_i \in \mathcal{X} \cap B_i, i = 1, \dots, L \big\}\Big].
\end{equation*} 
We say a point process is a Pfaffian, if there exists a  matrix kernel $K: \mathbb{R} \times \mathbb{R} \to \mathbb{C}^{2 \times 2}$ such that the correlation function $\rho_L (x_1, \dots, x_L) = \text{Pf}\big[K(x_i, x_j)\big]_{i, j = 1}^L$ for arbitrary $L \in \mathbb{Z}_{\geq 1}.$   
\begin{defin}[GOE point process]\label{def:GOE}
We say 
$\mathcal{X} = \{\mathbf{a}_1 > \mathbf{a}_2 > \cdots\} $ is the GOE-Airy point process, if it is a Pfaffian point process on $\mathbb{R}$ with kernel 
\begin{equation*}
K(x, y) = \begin{bmatrix}
K_{11} (x, y), & K_{12} (x, y) \\
K_{21} (x, y), & K_{22} (x, y)
\end{bmatrix}
\end{equation*}
with the entries 
\begin{align}
\label{eq:thmk11}
K_{11} (x, y) &= \int_0^{\infty} \emph{Ai}(x+\lambda) \emph{Ai}'(y + \lambda)  - \emph{Ai}(y+\lambda) \emph{Ai}'(x+\lambda) d\lambda,\\
\label{eq:thmk12}
K_{12} (x, y) &= -K_{21} (y, x) = \frac{1}{2} \int_0^{\infty} \emph{Ai}(x+\lambda) \emph{Ai} (y+\lambda) d\lambda + \frac{1}{2} \emph{Ai}(x) \int_{-\infty}^y \emph{Ai}(\lambda) d\lambda ,\\
\notag
K_{22} (x, y) &= \frac{1}{4} \int_0^\infty  \Big(\int_\lambda^\infty  \emph{Ai}(y+\mu) d\mu\Big) \emph{Ai}(x + \lambda) d\lambda 
- \frac{1}{4} \int_0^{\infty}  \Big(\int_{\lambda}^{\infty}  \emph{Ai}(x+\mu) d \mu\Big) \emph{Ai}(y+ \lambda) d\lambda\\ 
\label{eq:thmk22}
&\quad - \frac{1}{4} \int_0^\infty \emph{Ai}(x+\lambda) d\lambda + \frac{1}{4} \int_0^{\infty} \emph{Ai}(y+\lambda) d\lambda - \frac{\emph{sgn}(x-y)}{4}.
\end{align} 
Here, $\emph{sgn}(x)$ is defined as the sign function $\mathbf{1}_{\{x > 0\}} - \mathbf{1}_{\{x < 0\}}$. Furthermore, we set $K_{21}(x, y) = -K_{12} (y, x)$.
\end{defin}
\begin{remark}
Note that our expression of the kernel \eqref{eq:thmk11}, \eqref{eq:thmk12}, \eqref{eq:thmk22} is different from that in Eq. (6.1a), (6.1b), (6.1c) of \cite{BBCW18}. However, they are demonstrated to be the same, see (2.9) and (6.17) of \cite{Ferrari04} or \cite[Lemma 2.6]{BBCS18}.
\end{remark}
 It turns out that the right hand side of \eqref{eq:exactformula} can be rewritten as a \emph{Fredholm Pfaffian},  which has been first defined in \cite{Rains00}. We reproduce the definition of the Fredholm Pfaffian from \cite[
Definition 2.3]{BBCS18}.
\begin{defin}[Fredholm Pfaffian]\label{def:fredholmpf}
Let $K(x, y)$ be an asymmetric $2 \times 2$ matrix and $\mu$ to be a measure on $\mathbb{R}$ and  $f: \R \to \mathbb{C}$ be a measurable function. We define the Fredholm Pfaffian by the series expansion 
\begin{equation*}
\emph{Pf}\big[J + K\big]_{L^2(\mathbb{R}, f\mu)}  = \sum_{L = 0}^\infty \frac{1}{L!} \int_{\RL} \emph{Pf}\big[K(x_i, x_j)\big]_{i, j=1}^L \Big(\prod_{i=1}^L f(x_i)\Big) d\mu^{\otimes L} (x_1, \dots, x_L),
\end{equation*}
where 
\begin{equation*}
J(x, y) = \mathbf{1}_{\{x = y\}}
\begin{bmatrix}
0 & 1 \\
-1 & 0
\end{bmatrix}.
\end{equation*}
\end{defin} 
\begin{lemma}[Pfaffian point process and Fredholm Pfaffian \cite{Rains00}]\label{lem:rains}
Let $\mathbf{a_1} > \mathbf{a}_2 > \dots $ be a Pfaffian point process with kernel $K$ and  $f: \R \to \mathbb{C}$ be a measurable function. We have 
\begin{equation}\label{eq:temp5}
\mathbb{E}\Big[\prod_{i = 1}^\infty \big(1 + f(\mathbf{a}_i)\big)\Big] = \emph{Pf}\big[J + K\big]_{L^2(\mathbb{R}, f\mu)},
\end{equation}
as long as both sides of the above equation converge absolutely.
\end{lemma}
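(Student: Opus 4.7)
The plan is to establish \eqref{eq:temp5} by a direct algebraic expansion of the infinite product on the left, followed by an appeal to the defining property of the $L$-th correlation function $\rho_L$ and the Pfaffian assumption on the kernel $K$. The absolute convergence hypothesis on both sides is what legitimizes the interchanges of expectation, infinite summation, and integration in what would otherwise be a purely formal manipulation.

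First I would write the familiar product-to-sum identity, interpreted at the level of formal series,
$$\prod_{i=1}^\infty \big(1 + f(\mathbf{a}_i)\big) = 1 + \sum_{L=1}^\infty \frac{1}{L!} \sum_{\substack{(i_1, \dots, i_L) \\ \text{distinct}}} \prod_{k=1}^L f(\mathbf{a}_{i_k}).$$
Taking expectation term by term and applying the defining property of $\rho_L$ recalled at the start of Section \ref{sec:overview}, I would obtain
$$\mathbb{E}\Big[\sum_{(i_1, \dots, i_L)\,\text{distinct}} \prod_{k=1}^L f(\mathbf{a}_{i_k})\Big] = \int_{\RL} \rho_L(x_1, \dots, x_L) \prod_{k=1}^L f(x_k)\, d\mu^{\otimes L}(x_1, \dots, x_L).$$
Substituting $\rho_L(x_1, \dots, x_L) = \text{Pf}\big[K(x_i, x_j)\big]_{i, j = 1}^L$ from the Pfaffian hypothesis and summing over $L \geq 0$ produces precisely the series appearing in Definition \ref{def:fredholmpf}, which by definition equals $\text{Pf}[J + K]_{L^2(\R, f \mu)}$.

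The main obstacle, and essentially the only nontrivial analytic content, is justifying the swap between $\mathbb{E}[\cdot]$ and $\sum_{L=1}^\infty$, and relatedly giving rigorous meaning to the infinite product on the left-hand side (which need not converge pointwise on configurations). This is where the absolute convergence assumption enters: replacing $f$ by $|f|$ throughout the argument above yields a series of nonnegative terms, and the hypothesis on both sides of \eqref{eq:temp5} says exactly that this series is finite. Fubini-Tonelli then permits all the interchanges, and the infinite product on the left is interpreted as the resulting absolutely convergent $L^1$ expansion. Since the identity is the Pfaffian counterpart of Soshnikov's classical determinantal formula, I would refer to Rains \cite{Rains00} for the measure-theoretic details rather than reproduce them.
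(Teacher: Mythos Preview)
Your argument is the standard correct one: expand the product into a sum over distinct tuples, identify each term via the correlation functions, substitute the Pfaffian hypothesis, and invoke absolute convergence plus Fubini--Tonelli to justify the interchanges. There is nothing to compare against, however, because the paper does not supply its own proof of this lemma: it is stated as a known result attributed to Rains \cite{Rains00} and used as a black box. Your sketch is essentially what one would write to justify it, and your closing remark that you would refer to \cite{Rains00} for the measure-theoretic details matches the paper's treatment exactly.
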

 We take $\mathbf{a}_1 > \mathbf{a_2} > \dots$ to be the GOE point process defined in Definition \ref{def:GOE} and the function $f$ in the above lemma to be 
\begin{equation}\label{eq:phist}
\phi_{s, t} (x) := \frac{1}{\sqrt{1 + 4s \exp(t^{1/3} x)}} - 1,
\end{equation}
Theorem 7.6 of \cite{BBCW18} has already justified the convergence of both sides of \eqref{eq:temp5}. As a result,
\begin{align*} \mathbb{E}\Big[\prod_{i=1}^{\infty} \frac{1}{\sqrt{1 + 4s \exp(t^{\frac{1}{3}} \mathbf{a}_i)}}\Big]
= \sum_{L = 0}^{\infty} \frac{1}{L!} \int_{\RL} \pfll \prod_{i=1}^L \phi_{s, t}(x_i) dx_i.
\end{align*}
By Theorem \ref{thm:exactformula}, the left hand side in the above display equals the Laplace transform of $\ZZhf(2t, 0)\exp(\frac{t}{12})$, thus
\begin{equation}
\label{eq:pfaffianexpansion}
\mathbb{E}\Big[\lt\Big] = \sum_{L = 0}^{\infty} \frac{1}{L!} \int_{\RL} \pfll \prod_{i=1}^L \phi_{s, t}(x_i) dx_i
\end{equation}
To prove Theorem \ref{thm:main}, the next step is to link the Laplace transform of $\ZZhf(2t, 0)\exp(\frac{t}{12}) $ with its fractional moment. The following lemma was stated as \cite[Lemma 1.2]{DT19}, which can be verified via Fubini's theorem. 
\begin{lemma}\label{lem:momentlaplace}
For arbitrary non-negative random variable $X$, $0 \leq \alpha < 1$ and $n \in \Zp$,
\begin{align*}
\mathbb{E}\Big[X^{n-1+\alpha}\Big] &= \frac{(-1)^n}{\Gamma(1-\alpha)}  \int_0^{\infty} s^{-\alpha} \partial_s^n \mathbb{E}\Big[e^{-sX}\Big] ds.
\end{align*}
As a convention, we use $\partial_s^n \mathbb{E}\big[e^{-sX}\big]$ to denote the $n$-th derivative of $\mathbb{E}\big[e^{-sX}\big]$ with respect to $s$.
\end{lemma}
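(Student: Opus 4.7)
The plan is to proceed directly from differentiation under the expectation followed by Tonelli/Fubini, exactly as the paper's remark "can be verified via Fubini's theorem" suggests. First, I would justify that for every $s > 0$ one has
\begin{equation*}
\partial_s^n \mathbb{E}\big[e^{-sX}\big] = (-1)^n \mathbb{E}\big[X^n e^{-sX}\big],
\end{equation*}
by dominated convergence: on any compact subinterval $[s_0,s_1]\subset (0,\infty)$ the map $x\mapsto x^n e^{-sx}$ is uniformly bounded on $[0,\infty)$ by $(n/es_0)^n$, so the differences $h^{-1}(e^{-(s+h)X}-e^{-sX})$ are uniformly bounded, and $n$-fold iteration is justified in the same way.

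Next I would substitute this identity into the right-hand side of the lemma to get
\begin{equation*}
\frac{(-1)^n}{\Gamma(1-\alpha)}\int_0^\infty s^{-\alpha}\,\partial_s^n \mathbb{E}\big[e^{-sX}\big]\,ds \;=\; \frac{1}{\Gamma(1-\alpha)}\int_0^\infty s^{-\alpha}\,\mathbb{E}\big[X^n e^{-sX}\big]\,ds.
\end{equation*}
Because the integrand $s^{-\alpha} X^n e^{-sX}$ is non-negative, Tonelli's theorem applies unconditionally and permits exchanging the $s$-integral with the expectation. After the exchange, the inner integral can be evaluated pointwise on $\{X>0\}$ by the substitution $u = sX$:
\begin{equation*}
\int_0^\infty s^{-\alpha} e^{-sX}\,ds \;=\; X^{\alpha-1} \int_0^\infty u^{-\alpha} e^{-u}\,du \;=\; X^{\alpha-1}\,\Gamma(1-\alpha),
\end{equation*}
where the gamma integral converges precisely because $0\le\alpha<1$. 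On the null-for-these-purposes set $\{X=0\}$ the factor $X^n$ in front of the inner integral already vanishes, so no separate care is needed there as long as one uses the convention $0\cdot\infty = 0$ (equivalently, $\mathbb{E}[X^n \mathbf{1}_{\{X=0\}}(\cdots)] = 0$).

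Combining the two displays yields $\mathbb{E}[X^{n-1+\alpha}]$, finishing the proof. There is no real obstacle: the only thing to watch is that the identity be read as an equality in $[0,\infty]$, so that if $\mathbb{E}[X^{n-1+\alpha}]=\infty$ the $s$-integral likewise diverges; this is automatic from Tonelli. The $n=1$, $\alpha=0$ boundary case (where the formula reduces to $\mathbb{E}[1]=-\int_0^\infty \partial_s\mathbb{E}[e^{-sX}]\,ds$) is immediate from $\mathbb{E}[e^{-sX}]\to 0$ as $s\to\infty$ on $\{X>0\}$ together with a separate accounting for $\mathbb{P}(X=0)$, and the general boundary cases $\alpha=0$, $n\ge 2$ follow identically.
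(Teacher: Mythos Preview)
Your proposal is correct and follows exactly the approach the paper indicates: the paper does not give a detailed proof but simply remarks that the lemma ``can be verified via Fubini's theorem'' and cites \cite[Lemma 1.2]{DT19}. Your argument fills in those details precisely---differentiate under the expectation by dominated convergence, then apply Tonelli to swap the $s$-integral with the expectation, and evaluate the resulting Gamma integral.
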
 
 For fixed $p > 0$, we set $n = \lfloor p \rfloor + 1$ and $\alpha = p - n + 1$. It is clear that  $n \in \Zp$, $\alpha \in [0, 1)$. Applying Lemma \ref{lem:momentlaplace} with $X = \ZZhf(2t, 0) \exp\big(\frac{t}{12}\big)$ (note that $\ZZhf(2t, 0)$ is almost surely positive), we find that 
\begin{align*}
\mathbb{E}\Big[\ZZhf(2t, 0)^{p} e^{\frac{pt}{12}}\Big] = \frac{(-1)^n}{\Gamma(1-\alpha)}  \int_0^{\infty} s^{-\alpha} \partial_s^n \mathbb{E}\Big[e^{-s(\ZZhf(2t, 0) + \frac{t}{12})}\Big] ds.
\end{align*}
Splitting the interval of integration into $[0, 1]$ and $[1, \infty)$ yields
\begin{align}
\label{eq:temp4}
\mathbb{E}\Big[\ZZhf(2t, 0)^{p} e^{\frac{pt}{12}}\Big] = 
\frac{(-1)^n}{\Gamma(1-\alpha)}  \int_0^{1} s^{-\alpha} \partial_s^n \mathbb{E}\Big[e^{-s(\ZZhf(2t, 0) + \frac{t}{12})}\Big] ds + \mathcal{R}_p(t),
\end{align}
where $\mathcal{R}_{p}(t) := \frac{(-1)^n}{\Gamma(1-\alpha)} \int_1^{\infty} s^{-\alpha} \partial_s^n \mathbb{E}\big[e^{-s(\ZZhf(2t, 0) + \frac{t}{12})}\big] ds.$ 
\begin{lemma}\label{lem:Rp}
For fixed $p > 0$, $|\mathcal{R}_{p}(t)|$ is uniformly bounded by a constant  for every $t > 0$. 
\end{lemma}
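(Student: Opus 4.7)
The plan is to differentiate under the expectation, swap order of integration by Tonelli, and close with a crude pointwise bound. Set $X := \ZZhf(2t,0) e^{t/12}$, which is almost surely positive. For every $s > 0$, the standard Laplace-transform identity gives $\partial_s^n \mathbb{E}[e^{-sX}] = (-1)^n \mathbb{E}[X^n e^{-sX}]$; the exchange of $\partial_s^n$ with $\mathbb{E}$ is justified by dominated convergence, since $x \mapsto x^n e^{-sx}$ is bounded on $[0,\infty)$ by $(n/(se))^n$, uniformly in $s$ on any compact subinterval of $(0,\infty)$. Since $(-1)^n(-1)^n = 1$, substituting into the definition of $\mathcal{R}_p(t)$ yields
\begin{equation*}
\mathcal{R}_p(t) \;=\; \frac{1}{\Gamma(1-\alpha)} \int_1^\infty s^{-\alpha} \, \mathbb{E}\bigl[X^n e^{-sX}\bigr]\, ds,
\end{equation*}
so in particular $\mathcal{R}_p(t) \geq 0$.

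Because the integrand is non-negative, Tonelli's theorem lets me interchange the $s$-integral with $\mathbb{E}$. On the almost sure event $X > 0$, using $s^{-\alpha} \leq 1$ for $s \geq 1$ and $\alpha \in [0,1)$,
\begin{equation*}
\int_1^\infty s^{-\alpha} e^{-sX}\, ds \;\leq\; \int_1^\infty e^{-sX}\, ds \;=\; \frac{e^{-X}}{X}.
\end{equation*}
Multiplying by $X^n$ and taking expectation gives
\begin{equation*}
|\mathcal{R}_p(t)| \;\leq\; \frac{1}{\Gamma(1-\alpha)}\, \mathbb{E}\bigl[X^{n-1} e^{-X}\bigr].
\end{equation*}

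To finish, note that $\sup_{x > 0} x^{n-1} e^{-x}$ is a finite constant $M_n$ depending only on $n$ (explicitly, $M_n = (n-1)^{n-1}/e^{n-1}$ for $n \geq 2$ and $M_1 = 1$). Thus
\begin{equation*}
|\mathcal{R}_p(t)| \;\leq\; \frac{M_n}{\Gamma(1-\alpha)},
\end{equation*}
which depends only on $p$ via $n = \lfloor p \rfloor + 1$ and $\alpha = p+1-n$, and has no $t$-dependence. No step presents a real obstacle; the only mild care is in justifying the $\partial_s^n/\mathbb{E}$ interchange, which reduces to the textbook boundedness of $x^n e^{-sx}$ on $[0,\infty)$.
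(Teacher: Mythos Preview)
Your proof is correct and follows essentially the same approach as the paper: both identify $\partial_s^n \mathbb{E}[e^{-sX}] = (-1)^n \mathbb{E}[X^n e^{-sX}]$ and then exploit the elementary boundedness of $x^k e^{-cx}$ on $[0,\infty)$. The only cosmetic difference is the order of operations: the paper bounds $\mathbb{E}[X^n e^{-sX}] \leq \sup_{x\geq 0}(x^n e^{-sx}) = s^{-n}n^n e^{-n}$ pointwise in $s$ and then integrates $\int_1^\infty s^{-n-\alpha}\,ds$, whereas you first integrate in $s$ (via Tonelli and $s^{-\alpha}\leq 1$) to get $\mathbb{E}[X^{n-1}e^{-X}]$ and then bound by $\sup_{x>0} x^{n-1}e^{-x}$.
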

\begin{proof}
Since $\mathcal{R}_p (\tt) = \frac{(-1)^n}{\Gamma(1-\alpha)}\int_1^{\infty} s^{-\alpha} \mathbb{E}\big[e^{-sX} X^n\big] ds$
with $X = \ZZhf(2\tt, 0) \exp\big(\frac{t}{12}\big)$. Note that  $$\mathbb{E}\Big[e^{-sX} X^n\Big] \leq \sup_{x \geq 0}\big( e^{-sx} x^n\big) = s^{-n} n^n e^{-n}.$$ 
Replacing $\mathbb{E}\big[e^{-sX } X^n\big]$ with this upper bound inside the integral yields
\begin{equation*}
0 \leq (-1)^n \mathcal{R}_{p} (t) \leq \frac{1}{\Gamma(1-\alpha)} \int_1^{\infty} s^{-n-\alpha} n^n e^{-n} ds = \frac{n^n e^{-n}}{\Gamma(1-\alpha)(n+\alpha)}.
\end{equation*}
Since $\alpha$ and $n$ are determined by $p$, so the right hand side is a constant that only depends on $p$, this completes our proof.
\end{proof}
By \eqref{eq:pfaffianexpansion}, we see that the first term on the RHS of \eqref{eq:temp4} can be written as 
\begin{align}\label{eq:dis}
\int_0^{1} s^{-\alpha} \partial_s^n \mathbb{E}\big[e^{-s(\ZZhf(2t, 0) + \frac{t}{12})}\big] ds = \int_0^{1} s^{-\alpha} \partial_s^n \Big(\sum_{L = 1}^\infty \frac{1}{L!}\int_{\RL} \pfll  \prod_{i = 1}^L  \phi_{s, t}(x_i) dx_i\Big) ds.
\end{align}
Note that we throw out the $L = 0$ term in the summation since it is always $1$ and has $s$-derivative to be $0$.
It turns out that we can interchange the order of derivative, integration and summation for the right hand side of the above display, for which we formulate as a lemma. The proof of it is deferred to Section \ref{sec:dis}.
\begin{lemma}\label{lem:intdevinterchange}
We have 
\begin{align}
\notag
&\int_0^{1} s^{-\alpha} \partial_s^n \Big(\sum_{L = 1}^\infty \frac{1}{L!}\int_{\RL} \emph{Pf} \big[K(x_i, x_j)\big]_{i, j = 1}^L \prod_{i = 1}^L  \phi_{s, t}(x_i) dx_i\Big) ds\\
\label{eq:laplace1}
&= \sum_{L = 1}^\infty \frac{1}{L!}\int_0^{1} s^{-\alpha} ds   \int_{\RL} \emph{Pf} \big[K(x_i, x_j)\big]_{i, j = 1}^L  \partial_s^n\Big(\prod_{i = 1}^L  \phi_{s, t}(x_i)\Big) dx_1 \dots dx_L.
\end{align}
Consequently, it follows from \eqref{eq:dis} and the above display  that 
\begin{equation}\label{eq:laplace2}
\int_0^{1} s^{-\alpha} \partial_s^n \mathbb{E}\Big[\lt\Big] ds = \sum_{L = 1}^\infty \frac{1}{L!}\int_0^{1} s^{-\alpha} ds   \int_{\RL} \emph{Pf} \big[K(x_i, x_j)\big]_{i, j = 1}^L  \partial_s^n \Big(\prod_{i = 1}^L  \phi_{s, t}(x_i)\Big) dx_1 \dots dx_L.
\end{equation}
\end{lemma}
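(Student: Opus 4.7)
The plan is to justify pulling the sum $\sum_{L=1}^\infty$, the outer integral $\int_0^1 s^{-\alpha}\,ds$, and the derivative $\partial_s^n$ past the inner $L$-fold integral by establishing absolute integrability of the integrand and then appealing to Fubini--Tonelli together with standard differentiation-under-the-integral arguments. The whole argument rests on three ingredients: a Leibniz expansion of $\partial_s^n \prod_i \phi_{s,t}(x_i)$, pointwise upper bounds on the $s$-derivatives of $\phi_{s,t}$, and the Pfaffian bound of Proposition \ref{prop:pfbound} (which is established later in Section \ref{sec:prepare} and does not rely on this lemma).

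First, by Leibniz's rule, $\partial_s^n \prod_{i=1}^L \phi_{s,t}(x_i)$ decomposes as a finite multinomial sum of terms $\prod_i \partial_s^{k_i}\phi_{s,t}(x_i)$ with $k_1+\cdots+k_L=n$, so it suffices to control each such term. A direct computation, writing $\phi_{s,t}(x)+1 = (1+4s e^{t^{1/3}x})^{-1/2}$, produces bounds of the form
$$
|\partial_s^k \phi_{s,t}(x)| \leq C_k \frac{e^{k t^{1/3}x}}{(1+4s e^{t^{1/3}x})^{k+1/2}} \qquad (k \geq 1),
$$
together with the sharper estimate $|\phi_{s,t}(x)| \leq \min(1, 2s e^{t^{1/3}x})$ at $k=0$, which will be essential to handle the factors at those indices $i$ where no derivative falls (especially as $x_i \to -\infty$).

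Second, Proposition \ref{prop:pfbound} produces a bound on $|\pfll|$ that factorizes as $C^L \prod_i g(x_i)$ (possibly with different choices of $g$ adapted to different regimes of $x$), with super-exponential decay as $x_i \to +\infty$ coming from the Airy-function factors in the kernel and at most polynomial growth as $x_i \to -\infty$. Combining this Pfaffian bound with the derivative estimates above, one verifies that
$$
\sum_{L=1}^\infty \frac{1}{L!}\int_0^1 s^{-\alpha}\int_{\R^L} \big|\pfll\big|\cdot\big|\partial_s^n \textstyle\prod_{i=1}^L \phi_{s,t}(x_i)\big|\, dx_1\cdots dx_L\, ds < \infty.
$$
The $s$-integral is finite because $\alpha<1$ and because the Airy decay of $g$ at $+\infty$ absorbs the $e^{k_i t^{1/3}x_i}$ growth from $\partial_s^{k_i}\phi_{s,t}$, leaving only an $s^{-\alpha}$-integrable singularity at $s=0$. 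The $L$-fold spatial integral factorizes across coordinates into $L$ copies of a fixed finite integral, producing a geometric-in-$L$ factor, which the $(L!)^{-1}$ then dominates to secure summability in $L$.

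Given this absolute convergence, Fubini's theorem immediately allows swapping $\sum_L$, $\int_0^1 s^{-\alpha}\,ds$, and $\int_{\R^L}$ acting on $\partial_s^n(\prod_i\phi_{s,t}(x_i))$. To commute the remaining $\partial_s^n$ past the inner integral (identifying the $n$-th $s$-derivative of the $L$-fold integral with the integral of the $n$-th derivative of the integrand), one iterates the standard differentiation-under-the-integral / dominated-convergence argument, using the same pointwise bounds as the dominating functions on a small neighborhood of each $s \in (0,1)$. The main obstacle I anticipate is verifying that the Pfaffian decay of Proposition \ref{prop:pfbound} actually beats the $e^{k_i t^{1/3}x_i}$ growth at $+\infty$ in the worst case (e.g.\ when all $n$ derivatives fall on a single factor), and confirming that the $L-\#\{i:k_i>0\}$ derivative-free factors, carrying the $\min(1, 2se^{t^{1/3}x})$ bound and paired with the at-most polynomial growth of $g$ at $-\infty$, integrate to something summable in $L$; both issues force us to use the explicit structure of the GOE-Airy kernel through Proposition \ref{prop:pfbound} rather than an abstract trace-class argument.
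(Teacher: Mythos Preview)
Your approach is essentially the same as the paper's: Leibniz expansion of $\partial_s^n\prod_i\phi_{s,t}(x_i)$, the pointwise bounds on $\phi^{(k)}_{s,t}$ (including the crucial $\min(1,2se^{t^{1/3}x})$ at $k=0$ to handle $x\to-\infty$), and Proposition~\ref{prop:pfbound} for the Pfaffian, assembled into a uniform-in-$s$ bound on the $L$-fold integral that is summable in $L$ after dividing by $L!$. The paper organizes the interchange in three separate steps (derivative past the spatial integral, derivative past the sum, sum past the $s$-integral) rather than a single absolute-convergence argument, but the analytic content is identical. One minor imprecision: the Pfaffian bound from Proposition~\ref{prop:pfbound} carries a prefactor $(2L)^{L/2}$, not merely $C^L$, so the $L$-dependence is super-geometric; this does not matter since $\sum_L (2L)^{L/2}C^L/L!$ still converges, but your phrase ``geometric-in-$L$ factor'' slightly understates what you will actually obtain.
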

We set the first term in the right hand side summation of \eqref{eq:laplace2} as $\AA(t)$ and the higher order terms as $\BBpL(t)$ ($L \geq 2$), i.e. 
\begin{align}
\label{eq:ap}
\AA (t) 
&= \frac{(-1)^n}{\Gamma (1-\alpha)} \int_0^{1} s^{-\alpha}  \int_{\R}  K_{12} (x, x) \big(\partial_s^n \phi_{s, t}(x)\big) dx\\
\label{eq:bpL}
\mathcal{B}_{p, L} (t) &= \frac{(-1)^n}{\Gamma (1 - \alpha) L!}  \int_0^{1} s^{-\alpha}    \int_{\RL} \pfll  \partial_s^n \Big(\prod_{i = 1}^L \phi_{s, t}(x_i)\Big) dx_1 \dots dx_L, \quad L \geq 2.
\end{align}
Under this notation, the left hand side of \eqref{eq:laplace2} equals $\AA(t) + \sum_{p=2}^\infty \BBpL(t)$. Referring to \eqref{eq:temp4}, we obtain 
\begin{equation}\label{eq:decomposition}
\mathbb{E}\Big[\ZZhf(2t, 0)^{p} e^{\frac{pt}{12}}\Big]  = \mathcal{A}_{p} (t) + \sum_{L=2}^\infty \mathcal{B}_{p, L} (t) + \mathcal{R}_p (t).
\end{equation}
We want to show that the logarithm of the left hand side in the above display, after divided by $t$ and letting $t \to \infty$, converges to $p^3/3$. By Lemma \ref{lem:Rp}, $|\mathcal{R}_p(t)|$ is uniformly upper bounded by a constant for all $t > 0$. Therefore, to prove Theorem \ref{thm:main}, it suffices to demonstrate that the following facts for $\AA(t)$ and $\sum_{L=2}^\infty |\BBpL(t)|$.
\begin{prop}\label{prop:mainprop}
For fixed $p \in \mathbb{R}_{> 0}$,
$\lim_{t \to \infty} \frac{1}{t} \log \mathcal{A}_{p} (t) = e^{\frac{p^3}{3}}$.
\end{prop}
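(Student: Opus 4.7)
The plan is to analyze the integral \eqref{eq:ap} defining $\mathcal{A}_p(t)$ by a change of variable in $s$ followed by a saddle-point analysis in a rescaled $x$-variable. The outcome will be $\mathcal{A}_p(t) = (C+o(1))\,t^{\kappa}\,e^{p^3 t/3}$ for some $C>0$ and $\kappa \in \mathbb{R}$, so that the exponential growth rate $\mathcal{A}_p(t)^{1/t}$ converges to $e^{p^3/3}$, which is the content of the proposition.

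First I would simplify the $s$-dependence. Let $\psi(y) := (1+e^y)^{-1/2}-1$, so that $\phi_{s,t}(x) = \psi(y)$ with $y = t^{1/3}x + \log(4s)$. Since $\partial_s y = 1/s$, repeated application of the chain rule (Fa\`a di Bruno) gives $\partial_s^n \phi_{s,t}(x) = s^{-n}g_n(y)$, where $g_n$ is a finite linear combination of $\psi',\ldots,\psi^{(n)}$ that decays at least like $e^{-|y|/2}$ at both $\pm\infty$. I would then substitute $s = \tfrac14 e^{-\zeta t}$ (so $ds = -st\,d\zeta$) and $x = \zeta t^{2/3} + t^{-1/3}y$, using the identity $\alpha+n = p+1$ to collapse $s^{-\alpha-n}\cdot s = 4^p e^{p\zeta t}$. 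This rewrites $\mathcal{A}_p(t)$ (up to a bounded error from the region $s \in (1/4,1]$, equivalently $\zeta \leq 0$) as the double integral
\begin{equation*}
\mathcal{A}_p(t) \;\approx\; \frac{(-1)^n\,4^p\,t^{2/3}}{\Gamma(1-\alpha)} \int_{0}^{\infty} e^{p\zeta t}\!\int_{\mathbb{R}} K_{12}(x,x)\big|_{x = \zeta t^{2/3}+t^{-1/3}y}\,g_n(y)\,dy\,d\zeta.
\end{equation*}

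Next I would insert the Airy asymptotic. For $\zeta > 0$, $x = \zeta t^{2/3}+t^{-1/3}y \to +\infty$, and the standard expansion $\Ai(x)\sim \tfrac{1}{2\sqrt\pi}x^{-1/4}e^{-\tfrac23 x^{3/2}}$ together with $\int_{-\infty}^x \Ai(\lambda)\,d\lambda \to 1$ shows that the second term in \eqref{eq:thmk12} dominates, giving $K_{12}(x,x) \sim \tfrac{1}{4\sqrt\pi}x^{-1/4}e^{-\tfrac23 x^{3/2}}$. Expanding $x^{3/2} = \zeta^{3/2}t + \sqrt{\zeta}\,y + O(t^{-1})$ reduces the inner integral to $h(\zeta) := \int_{\mathbb{R}} e^{-\sqrt\zeta\,y}g_n(y)\,dy$ (smooth and nonzero), so the main $\zeta$-integral becomes
\begin{equation*}
\int_0^{\infty} \zeta^{-1/4}\,\exp\!\bigl((p\zeta - \tfrac23 \zeta^{3/2})t\bigr)\,h(\zeta)\,d\zeta,
\end{equation*}
up to polynomial-in-$t$ prefactors. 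A classical Laplace analysis applied to $F(\zeta) = p\zeta - \tfrac23\zeta^{3/2}$ locates a unique critical point at $\zeta_\ast = p^2$ (solving $p - \sqrt{\zeta}=0$) with $F(p^2) = p^3 - \tfrac23 p^3 = p^3/3$, and a quadratic expansion around $\zeta_\ast$ produces $\mathcal{A}_p(t) = C\,t^{\kappa}\,e^{p^3 t/3}(1+o(1))$, which after taking $t^{-1}\log$ yields the claimed value $e^{p^3/3}$ for $\mathcal{A}_p(t)^{1/t}$.

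The main obstacle is controlling signs and positivity: the definition of $\mathcal{A}_p(t)$ carries a $(-1)^n$, the signed Stirling coefficients in $g_n$ alternate, and $K_{12}(x,x)$ is only asymptotically positive at large $x$. Verifying that the leading constant $C$ is strictly positive — so that the logarithm is well-defined for large $t$ and the growth rate equals $p^3/3$ rather than being merely bounded by it — requires tracking all these signs through the Fa\`a di Bruno expansion. Secondary points, bounding the tails of the $\zeta$-integral away from $\zeta_\ast = p^2$ (both near $\zeta=0$ where the Airy asymptotic degrades, and at $\zeta \to \infty$ where the cubic $-\tfrac23\zeta^{3/2}t$ ensures decay) and uniformizing the Airy expansion across the $y$-integration window, are routine but must be written out carefully.
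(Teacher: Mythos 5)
Your proposal is correct in outline but takes a genuinely different route from the paper. The paper (Proposition \ref{prop:AAp} together with Lemmas \ref{lem:gamma}, \ref{lem:k12laplace}, \ref{lem:k12bound}) first evaluates the $s$-integral \emph{exactly}: extending $\int_0^1 \to \int_0^\infty - \int_1^\infty$ and applying the Beta-function identity of Lemma \ref{lem:gamma} collapses $\AAp(t)$ to $c_p\int_{\R} K_{12}(x,x)e^{pt^{1/3}x}dx$, and the remaining work is a one-dimensional Laplace analysis in $x$ (Lemma \ref{lem:k12laplace}) proved via the two-sided bounds on $K_{12}(x,x)$ from Lemma \ref{lem:k12bound}; the $\int_1^\infty$ piece is bounded separately. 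You instead change variables $(s,x)\mapsto(\zeta,y)$ simultaneously and argue the resulting 2D integral approximately factorizes into $h(\zeta)$ times the Laplace weight $e^{t(p\zeta-\frac23\zeta^{3/2})}$. Both routes hit the same saddle at $p^2$ and the same value $p^3/3$. The paper's route is cleaner at two precise points that you flag as obstacles: (a) because the $s$-integral is done exactly, there is no 2D approximation to control — no "uniformizing the Airy expansion across the $y$-window" and no $O(y^2/t)$ error in $x^{3/2}$ to tame — and (b) $\partial_s^n\phi_{s,t}(x)$ has the closed form $(-1)^n(2n-1)!!\,2^n e^{n t^{1/3}x}(1+4se^{t^{1/3}x})^{-(2n+1)/2}$, so it is $(-1)^n$ times a manifestly positive function; the Fa\`a di Bruno coefficients collapse to a single definite-sign term and your "signed Stirling numbers" concern is a non-issue. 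Translating that to your coordinates, $g_n(y) = (-1)^n(2n-1)!!\,2^{-n}e^{ny}(1+e^y)^{-(2n+1)/2}$, so $h(\zeta_\ast)\neq 0$ and the positivity of your leading constant are immediate; with that observation and the (routine but nontrivial) truncation/error bookkeeping you mention, your route goes through. Minor slip: the intermediate expansion should read $x^{3/2}=\zeta^{3/2}t+\tfrac32\sqrt{\zeta}\,y+O(y^2/t)$, which after multiplying by $\tfrac23$ does produce the $e^{-\sqrt\zeta\,y}$ in your $h(\zeta)$, so the end formula is right.
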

\begin{prop}\label{prop:minor}
For fixed $p \in \mathbb{R}_{> 0}$,
$\limsup_{t \to \infty} \frac{1}{t} \log\Big(\sum_{L=2}^\infty |\mathcal{B}_{p, L} (\tt)|\Big) \leq e^{\frac{p^3}{3} - \delta_p}$ with $\dd = \min(\frac{2}{3}, \frac{p^3}{4})$.
\end{prop}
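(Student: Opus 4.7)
The overall plan is to majorize $\sum_{L=2}^\infty |\mathcal{B}_{p,L}(t)|$ term by term, using the two Pfaffian bounds supplied by Proposition \ref{prop:pfbound}, with a split between a small-$L$ regime (which calls for Laplace-type asymptotics) and a large-$L$ regime (which is handled by summability alone). I would first expand the inner derivative by the multinomial product rule,
\begin{equation*}
\partial_s^n \prod_{i=1}^L \phi_{s,t}(x_i) = \sum_{k_1+\cdots+k_L = n} \binom{n}{k_1,\dots,k_L} \prod_{i=1}^L \partial_s^{k_i}\phi_{s,t}(x_i),
\end{equation*}
and insert the closed form $\partial_s^k \phi_{s,t}(x) = (-1)^k \frac{(2k-1)!!}{2^k} (4 e^{t^{1/3}x})^k (1 + 4 s e^{t^{1/3}x})^{-k-1/2}$ for $k\geq 1$, together with $|\phi_{s,t}| \leq 1$. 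This turns each $\mathcal{B}_{p,L}(t)$ into a finite sum of multi-dimensional Pfaffian integrals whose $\phi$-factors are pointwise controlled in $x$ and $s$.

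For small $L$, say $2 \leq L \leq L_0(p)$ for a threshold depending only on $p$, I would apply the finer of the two bounds in Proposition \ref{prop:pfbound} — the one that keeps the diagonal-coincidence structure of the kernel — and mimic the steepest-descent analysis already carried out for $\mathcal{A}_p(t)$ in Proposition \ref{prop:mainprop}. For $L = 1$, that analysis localizes to a single $(s,x)$-saddle in the active window $x \approx -t^{-1/3}\log(4s)$ and produces the rate $p^3/3$. For $L \geq 2$, either several $x_i$'s simultaneously sit in that active window and the off-diagonal Pfaffian entries $K(x_i,x_j)$ force a cancellation, or at least one $x_i$ escapes the window and the Airy tail $\exp(-\tfrac{2}{3} x_i^{3/2})$ kicks in; either way one obtains a strict improvement of at least $\delta_p = \min(\tfrac{2}{3}, \tfrac{p^3}{4})$ on the exponent.

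For large $L > L_0$, I would instead invoke the Hadamard-type Pfaffian bound (obtained from $|\mathrm{Pf}[K]|^2 = \det[K]$ and the Hadamard inequality applied row-by-row), yielding a factor decaying as $C^L / (L!)^{1/2}$ or similar. Combined with $|\phi_{s,t}(x)| \leq 1$ and the integrability of $|\phi_{s,t}|$ on the relevant region, this makes $\sum_{L > L_0} |\mathcal{B}_{p,L}(t)|$ a convergent series whose exponential growth in $t$ is dominated by that of a single term; a crude growth of the same order as the small-$L$ terms then suffices.

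The main obstacle I expect is the small-$L$ analysis, and in particular $L = 2$ with the extremal multi-indices $(k_1,k_2) = (n,0)$ or $(0,n)$. There one factor is fully differentiated (and so mimics the integrand of $\mathcal{A}_p(t)$) while the other is an undifferentiated $\phi_{s,t}(x_j)$; the task is to show that the Pfaffian coupling between $x_1,x_2$ together with the tail of $\phi_{s,t}$ outside the active window removes exactly the $\delta_p$ margin. The two cases in $\delta_p = \min(\tfrac{2}{3}, \tfrac{p^3}{4})$ should correspond to whether the reduction is sourced by the Airy tail of the kernel (contributing $\tfrac{2}{3}$) or by the intrinsic $p$-dependent geometry of the leading saddle (contributing $\tfrac{p^3}{4}$, i.e.\ $\tfrac{p^3}{3} - \tfrac{p^3}{12}$); disentangling these two mechanisms via a two-variable saddle-point computation is where the work lies.
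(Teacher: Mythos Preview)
Your high-level architecture matches the paper's: multinomial expansion of $\partial_s^n\prod\phi$, a small-$L$/large-$L$ split, and the two Pfaffian bounds of Proposition~\ref{prop:pfbound} used in the two regimes respectively. But the mechanism you propose for the small-$L$ improvement is not the one that works, and the large-$L$ sketch is also incomplete.

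For small $L$ you say the gain comes either from ``off-diagonal Pfaffian entries $K(x_i,x_j)$ forc[ing] a cancellation'' or from an $x_i$ escaping the active window. The paper uses \emph{no} Pfaffian cancellation: it takes absolute values immediately via Proposition~\ref{prop:pfbound}~\ref{item:pfbound2}, obtaining $|\mathrm{Pf}|\leq \sqrt{(2L)!}\,C^L\prod_i F_{2/3,2}(x_i)$, and the multi-$x$ integral then \emph{factorizes}. The crucial step you are missing is the change of variable $s=e^{-t\sigma}$, after which each one-dimensional factor is controlled by Proposition~\ref{prop:integrationbound}, and the whole term reduces to $\int_0^\infty e^{t\mathsf{M}(\sigma)}d\sigma$ for an explicit scalar function $\mathsf{M}(\sigma)$. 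For $\sigma$ small one has $\mathsf{M}(\sigma)=p\sigma-\tfrac{2L}{3}\sigma^{3/2}$; since $L\geq 2$ this is at most $p\sigma-\tfrac{4}{3}\sigma^{3/2}\leq \tfrac{p^3}{12}$, which is exactly the $\tfrac{p^3}{4}$ deficit. The $\tfrac{2}{3}$ deficit arises from the $\sigma\geq 1$ range by a separate elementary bound on $\mathsf{M}(\sigma)$ (splitting further on whether $r$, the number of nonzero $m_i$, equals $1$ or is $\geq 2$). So the improvement comes not from any $2\times 2$ kernel interaction but simply from having $L\geq 2$ copies of the $e^{-\frac{2}{3}x^{3/2}}$ tail, and the analysis is one-variable in $\sigma$, not a two-variable saddle problem.

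For large $L$ you write that summability ``alone'' suffices and that ``a crude growth of the same order as the small-$L$ terms'' is enough. This is not quite right: the naive Hadamard bound combined with $|\phi|\leq 1$ and integrability of $F_{1/3,2}$ gives constants that depend on $t$ in an uncontrolled way (cf.\ Lemma~\ref{lem:pftimesdev}, where $C=C(n,t)$). The paper still runs the $\mathsf{M}(\sigma)$ analysis in this regime (now with $F_{1/3,2}$ in place of $F_{2/3,2}$, so $\mathsf{M}(\sigma)=p\sigma-\tfrac{L}{3}\sigma^{3/2}$ for small $\sigma$) and uses the largeness of $L$ only to kill the $\sigma>4$ tail; summability over $L$ via $(2L)^{L/2}/L!\leq C^L/\sqrt{L!}$ is applied only at the very end, after the uniform-in-$L$ bound $C^L(2L)^{L/2}t^{L/6}e^{(p^3/3-\delta_p)t}$ is already in hand.
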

\noindent We will prove the two propositions in Section \ref{sec:Ap} and Section \ref{sec:Bp}. Let us first conclude the proof of Theorem  \ref{thm:main}.
\begin{proof}[Proof of Theorem \ref{thm:main}]
For part \ref{item:thm1}, from Proposition \ref{prop:mainprop} and \ref{prop:minor}, we know that $\mathcal{A}_{p} (t)$ grows exponentially faster than $\sum_{L=2}^\infty |\mathcal{B}_{p, L} (t)|$ as $t \to \infty$. Along with the fact that $|\mathcal{R}_{p}(t)|$ is upper bounded by a constant for all $t$,  there exists  $t_0 >  0$ such that for all $t > t_0$, $$\sum_{L=2}^\infty |\mathcal{B}_{p, L} (\tt)| + \big|\mathcal{R}_{p}(t)\big| \leq \frac{1}{2} \mathcal{A}_{p} (t).$$ 
Referring to the decomposition \eqref{eq:decomposition} and using triangle inequality, we see that for $t > t_0$,
\begin{equation*}
\log\Big(\frac{1}{2}\mathcal{A}_{p} (t)\Big) \leq \log \mathbb{E}\Big[\ZZhf(2t, 0)^p\Big] \leq \log\Big( \frac{3}{2}\mathcal{A}_{p} (t)\Big).
\end{equation*}
Dividing very term in the above display by $t$, Theorem \ref{thm:main} \ref{item:thm1} follows easily from Proposition \ref{prop:mainprop} as we take $t \to \infty$. Since we know that $t^{-1} \lim_{t \to \infty} \log \mathbb{E}\big[\ZZhf(2t, 0)^p \exp(\frac{pt}{12})\big] = p^3/3$. Applying \cite[Proposition 1.12]{GL20} with $h(p) = \frac{p^3}{3}$, we obtain the upper tail LDP with rate function to be $-\sup_{p > 0} (ps - p^3/3) = \frac{2}{3} s^{\frac{3}{2}}$,  thus  we  obtain Theorem \ref{thm:main} \ref{item:thm2}.
\end{proof}

\section{Asymptotic of $\mathcal{A}_{p}(t)$: Proof of Proposition \ref{prop:mainprop}}
\label{sec:Ap}
 
In this section, we prove Proposition \ref{prop:mainprop}. One crucial step is Lemma \ref{lem:k12laplace}, whose proof relies on  Lemma \ref{lem:k12bound} and a steepest descent type analysis. Throughout the rest of the paper, we use $C, C_1, C_2$ to denote a constant, which may vary from line to line. We might not generally specify when irrelevant terms are being absorbed into the constants. We might also write $C(a), C(a, b)$ when we want to specify which parameters the constant depends on.
\begin{lemma}\label{lem:gamma}
Denote $\Be(u, v)$ to be the beta function $\int_0^1 x^{u-1} (1-x)^{v-1} dx$. For $\gamma > 0, \alpha < 1$ and $\alpha + \beta > 1$,
\begin{equation*}
\int_0^\infty \frac{s^{-\alpha}}{(1 + \gamma s)^{\beta}} ds=  \gamma^{\alpha - 1} \Be(1 - \alpha, \beta+\alpha - 1)
\end{equation*}
\end{lemma}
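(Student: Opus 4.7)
The plan is to reduce the integral to a standard Beta function representation via two elementary substitutions. First I would rescale by setting $u = \gamma s$, so that $ds = \gamma^{-1} du$ and $s^{-\alpha} = \gamma^{\alpha} u^{-\alpha}$. This immediately pulls out the factor $\gamma^{\alpha-1}$ and reduces the problem to showing
\begin{equation*}
\int_0^\infty \frac{u^{-\alpha}}{(1+u)^\beta}\, du \;=\; \Be(1-\alpha, \beta+\alpha-1).
\end{equation*}

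Second, I would apply the Euler substitution $x = u/(1+u)$, equivalently $u = x/(1-x)$, which maps $(0, \infty)$ bijectively onto $(0, 1)$ with $du = (1-x)^{-2}\, dx$, $u^{-\alpha} = x^{-\alpha}(1-x)^{\alpha}$, and $(1+u)^{-\beta} = (1-x)^{\beta}$. Collecting exponents yields $x^{-\alpha}(1-x)^{\alpha+\beta-2}\, dx$, which is exactly the integrand defining $\Be(1-\alpha, \alpha+\beta-1)$ from the statement.

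Finally I would check that the hypotheses $\alpha < 1$ and $\alpha + \beta > 1$ are precisely the conditions that make both the original improper integral and the resulting Beta integral absolutely convergent: the condition $\alpha < 1$ handles integrability at $s = 0$ (equivalently $x = 0$), and $\alpha + \beta > 1$ handles the decay at $s = \infty$ (equivalently $x = 1$). There is no real obstacle here; the lemma is a direct change of variables, and the only thing worth spelling out is the matching of convergence conditions so that Fubini/dominated-convergence style justifications are not needed beyond absolute convergence on $(0,1)$.
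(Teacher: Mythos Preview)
Your proposal is correct and essentially identical to the paper's proof: the paper performs the single substitution $s = t/(\gamma(1-t))$, which is exactly the composition of your two substitutions $u = \gamma s$ followed by $x = u/(1+u)$. Your added remark on why the hypotheses $\alpha < 1$ and $\alpha + \beta > 1$ guarantee convergence is a nice clarification that the paper leaves implicit.
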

\begin{proof}
Via a change of variable $s = \frac{t}{\gamma(1-t)}$, 
we get 
\begin{equation*}
\int_0^\infty \frac{s^{-\alpha} ds}{(1+\gamma s)^\beta} = \gamma^{\alpha-1} \int_0^1 t^{-\alpha} (1-t)^{\alpha + \beta -2} dt =  \gamma^{\alpha - 1} \Be(1 - \alpha, \beta+\alpha - 1). \qedhere
\end{equation*}
\end{proof}

\begin{lemma}\label{lem:k12laplace}
For fixed $p, t_0 > 0$, there exists constant $\C = C(p, t_0)$ such that for all $\tt > \tt_0$,
\begin{equation*}
\frac{1}{C} t^{-\frac{2}{3}} e^{\frac{1}{3} p^3 t} \leq\int_0^\infty K_{12} (t^{\frac{2}{3}} x, t^{\frac{2}{3}} x) e^{p t x} dx \leq C t^{-\frac{2}{3}} e^{\frac{1}{3} p^3 t}.
\end{equation*}
\end{lemma}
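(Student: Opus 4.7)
The plan is a Laplace-method analysis driven by the large-argument asymptotics of $K_{12}(y,y)$. From the explicit formula
\begin{equation*}
K_{12}(y,y) = \frac{1}{2}\int_0^\infty \Ai(y+\lambda)^2 \, d\lambda + \frac{1}{2}\Ai(y)\int_{-\infty}^y \Ai(\lambda) \, d\lambda,
\end{equation*}
the first summand decays like $y^{-1} e^{-\frac{4}{3} y^{3/2}}$, while the second is asymptotic to $\frac{1}{2}\Ai(y)$ because $\int_{-\infty}^\infty \Ai(\lambda)\,d\lambda = 1$. I would invoke Lemma \ref{lem:k12bound} to extract uniform two-sided bounds of the form $c_1 \Ai(y) \leq K_{12}(y,y) \leq c_2 \Ai(y)$ for $y$ sufficiently large, plus a uniform upper bound on compact sets. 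Combined with the classical Airy asymptotic $\Ai(y) \asymp y^{-1/4} e^{-\frac{2}{3} y^{3/2}}$, substituting $y = t^{2/3} x$ sandwiches the integrand $K_{12}(t^{2/3}x, t^{2/3}x) e^{ptx}$ between constants times $t^{-1/6} x^{-1/4} e^{t(px - \frac{2}{3} x^{3/2})}$ for $x$ bounded away from $0$.

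The problem then reduces to the standard Laplace method for the phase $f(x) := px - \frac{2}{3}x^{3/2}$, which has a unique critical point on $(0,\infty)$ at $x_* = p^2$, with $f(x_*) = \frac{p^3}{3}$ and $f''(x_*) = -\frac{1}{2p} < 0$. I would split the integration at a small window $[x_* - \delta, x_* + \delta]$: on this window, a Taylor expansion $f(x) \approx \frac{p^3}{3} - \frac{1}{4p}(x - x_*)^2$ produces a Gaussian contribution of order $(p^2)^{-1/4}\sqrt{4\pi p /t}$, giving a window integral $\asymp t^{-1/2} e^{p^3 t/3}$; off the window, $f(x) \leq \frac{p^3}{3} - \eta$ for some $\eta = \eta(\delta) > 0$, contributing only $e^{t(\frac{p^3}{3}-\eta)}$. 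Multiplied by the $t^{-1/6}$ prefactor from the Airy asymptotic, this yields the claimed $t^{-2/3} e^{p^3 t/3}$ scaling. The upper bound of the lemma uses the upper sandwich globally, while the matching lower bound uses the lower sandwich on the window $[x_* - \delta, x_* + \delta]$, where $y = t^{2/3}x$ is large (for $t > t_0$) so that $K_{12}(y,y) \geq c_1 \Ai(y)$ applies and the local Gaussian contribution alone is already of the right order.

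The remaining details are routine. On $x \in [0, \varepsilon]$, $K_{12}(t^{2/3}x, t^{2/3}x)$ is uniformly bounded and $e^{ptx}$ is $O(1)$, so that region contributes $O(1)$ and is absorbed into the constant; for $x$ large the $e^{-\frac{2}{3} t x^{3/2}}$ decay easily beats $e^{ptx}$ and the tail is integrable. The main obstacle I foresee is essentially bookkeeping — ensuring that the sandwich constants $c_1, c_2$ from Lemma \ref{lem:k12bound}, the contributions from the junction between the Airy asymptotic regime and the bounded regime, and the Laplace tail estimate combine into a single constant $C = C(p, t_0)$ valid for all $t > t_0$ — but no new analytical ingredient beyond Lemma \ref{lem:k12bound} and the classical Airy asymptotic should be required.
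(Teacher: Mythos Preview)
Your approach is essentially the same as the paper's: invoke Lemma~\ref{lem:k12bound} to sandwich $K_{12}(y,y)$ by constant multiples of $(1+y)^{-1/4}e^{-\frac{2}{3}y^{3/2}}$, reduce to a Laplace integral with phase $f(x)=px-\frac{2}{3}x^{3/2}$, localize near the unique maximizer $x_*=p^2$ where $f(x_*)=\frac{p^3}{3}$, and show the complement is exponentially subdominant. The paper carries out the localization via the substitution $x=(p+r)^2$ rather than a Taylor expansion, but this is a cosmetic difference.

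One slip worth flagging: your claim that $e^{ptx}$ is $O(1)$ on $x\in[0,\varepsilon]$ is false --- it grows like $e^{pt\varepsilon}$ --- so that region does not contribute $O(1)$. The conclusion that it is negligible is still correct, since for $\varepsilon$ small enough $f(x)\le p\varepsilon<\frac{p^3}{3}$ on $[0,\varepsilon]$; but more to the point, Lemma~\ref{lem:k12bound}\ref{item:k12 xpositive} already gives the two-sided bound for \emph{all} $y\ge 0$, so there is no need to carve out a separate small-$x$ regime at all.
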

\begin{proof}
Throughout the proof we write $C = C(p, t_0)$ and denote by $\UU_p (x) = -\frac{2}{3} x^{\frac{3}{2}} + px$. Using the inequality in Lemma \ref{lem:k12bound} \ref{item:k12 xpositive}, 
\begin{align*}
\frac{1}{C}\int_0^{\infty} \frac{e^{t \UU_p (x)}}{(1+t^{\frac{2}{3}} x)^{\frac{1}{4}}} dx \leq \int_0^\infty K_{12} (t^{\frac{2}{3}} x, t^{\frac{2}{3}} x) e^{p t x} dx \leq C \int_0^{\infty} \frac{e^{t \UU_p (x)}}{(1+t^{\frac{2}{3}} x)^{\frac{1}{4}}} dx
\end{align*}
The proof is completed if we can show there exists a constant $C$ such that for all $t > t_0$,
\begin{equation}\label{eq:k12needtoshow}
\frac{1}{\C} t^{-\frac{2}{3}} e^{\frac{p^3 \tt}{3}} \leq \int_0^{\infty} \frac{e^{t \UU_p (x)}}{(1+t^{\frac{2}{3}} x)^{\frac{1}{4}}} dx \leq \C t^{-\frac{2}{3}}e^{\frac{p^3 t}{3}}
\end{equation} 
An elementary calculus tells that the maximum of $\UU_p (x) = -\frac{2}{3} x^{\frac{3}{2}} + px$ on $[0, \infty)$ is reached at $x = p$, with $U_p (p) = \frac{1}{3} p^3$. So it is natural to expect that the main contribution of the integral in the above display comes around a small region around $x = p$.  
Having this intuition in mind, we let $\qq = \frac{p}{4}$ decompose
\begin{equation}\label{eq:laplacebreak}
\int_0^{\infty} \frac{e^{t \UU_p (x)}}{(1+t^{\frac{2}{3}} x)^{\frac{1}{4}}} dx = \Big(\int_{[(p-\qq)^2, (p+\qq)^2]} +   \int_{\Rp \backslash [(p-\qq)^2, (p+\qq)^2]}\Big) \frac{e^{t \UU_p (x)}}{(1+t^{\frac{2}{3}}x)^{\frac{1}{4}}} dx = \KK_1 + \KK_2.
\end{equation}
It suffices to analyze $\KK_1$ and $\KK_2$ respectively. For $\KK_1$, we make a change of variable $x = (p + r)^2$. Noting that $\UU_p \big((p+r)^2\big) = \frac{p^3}{3} - (\frac{2}{3}r+p) r^2$, we get
\begin{equation}\label{eq:k1}
\KK_1 = \int_{-\qq}^\qq \frac{2(p+r) e^{t \UU_p ((p+r)^2)} }{(1+t^{\frac{2}{3}} (p+r)^2)^{\frac{1}{4}}} dr =  e^{\frac{p^3 t}{3}} \int_{-\qq}^{\qq} \frac{2(p + r) e^{-t (\frac{2}{3} r + p) r^2}}{ (1 + t^{\frac{2}{3}} (p+r)^2)^{\frac{1}{4}}} dr
\end{equation}
Recall that $\qq = \frac{p}{4}$, so there exists a constant $C = C(p, t_0)$ such that for all $r \in [-\frac{p}{4}, \frac{p}{4}]$ and $t > t_0$, 
\begin{equation}\label{eq:temp7}
\frac{e^{-Ct r^2}}{C t^{\frac{1}{6}}} \leq \frac{2(p+r) e^{-t (\frac{2}{3} r + p) r^2}}{(1 + t^{\frac{2}{3}} (p+r)^2)^{\frac{1}{4}}} \leq \frac{C e^{-\frac{1}{C} t r^2}}{t^{\frac{1}{6}}}.
\end{equation}
By a change of variable $r \to t^{-\frac{1}{2}} r$, there exists constant $C_1$ such that for $t > t_0$
\begin{equation*}
C_1^{-1} t^{-\frac{2}{3}} \leq \int_{-\qq}^\qq \frac{ e^{-C t r^2}}{C t^{\frac{1}{6}}} \leq \int_{-\qq}^\qq \frac{C e^{-\frac{1}{C} t r^2}}{t^{\frac{1}{6}}} \leq C_1 t^{-\frac{2}{3}}
\end{equation*}
Integrating the terms in \eqref{eq:temp7} from $-\qq$ to $\qq$ and utilizing the  displayed inequality above and \eqref{eq:k1}, we conclude that  
$\frac{1}{C} t^{-\frac{2}{3}} e^{\frac{p^3 t}{3}} \leq \KK_1 \leq C t^{-\frac{2}{3}} e^{\frac{p^3 t}{3}}$ for $t> t_0$.
\bigskip
\\
For $\KK_2$, by a change of variable $x = r^2$ and noting $U_p (r^2) = \frac{p^3}{3}-(r-p)^2 (\frac{2}{3}r + \frac{1}{3}p)$, we have 
\begin{equation}
\KK_2  
= e^{\frac{p^3 \tt}{3}} \int_{\Rp \backslash [ p-\qq, p+\qq]} \frac{e^{\tt (r-p)^2 (-\frac{2}{3} r - \frac{1}{3} p)}}{(1 + \tt^{\frac{2}{3}} r^2)^{\frac{1}{4}}} dr 
\leq e^{\frac{\tt( p^3 -p\qq^2)}{3}} \int_{\Rp \backslash [ p-\qq, p+\qq]} \frac{e^{-\frac{2}{3} t r(r-p)^2 }}{(1+t^{\frac{2}{3}} r^2)^{\frac{1}{4}}} dr
\end{equation}
The inequality in the above display follows from noticing $\frac{1}{3}(r-p)^2 p \geq \frac{p\qq^2}{3}$ when $r \notin [p-\qq, p+\qq]$. For the integral on the right hand side of the above display, we find that  $\int_{\Rp \backslash [ p-\qq, p+\qq]} \frac{e^{-\frac{2}{3} \tt r(r-p)^2 }}{(1+\tt^{\frac{2}{3}} r^2)^{\frac{1}{4}}} dr \leq \int_{\Rp} e^{-\frac{2}{3} \tt \qq^2 r} dr = \frac{3}{2\qq^2 \tt}$. Since we assume $t \geq t_0$, by taking $C = \frac{3}{2\qq^2 t_0}$, we know that 
$$0 \leq \KK_2 \leq \frac{3}{2\qq^2 t} e^{\frac{\tt( p^3 -p\qq^2)}{3}} \leq C e^{\frac{\tt( p^3 -p\qq^2)}{3}}.$$  Combining this with \eqref{eq:k1} and recall from \eqref{eq:laplacebreak} that $\int_0^{\infty} \frac{e^{t \UU_p (x)}}{(1+t^{\frac{2}{3}} x)^{\frac{1}{4}}} dx = \KK_1 + \KK_2$, we see that $\KK_1$ is the dominating term. This completes the proof of \eqref{eq:k12needtoshow}.
\end{proof}
 We are now ready to prove Proposition \ref{prop:mainprop}.
\begin{proof}[Proof of Proposition \ref{prop:mainprop}]
Recall  from \eqref{eq:phist} that $\phi_{s, t}(x) = \frac{1}{\sqrt{1 + 4s \exp(t^{1/3} x)}} - 1$, so 
$$\partial_s^n \phi_{s, t}(x) = (-2)^n (2n-1)!! (1 + 4s\exp(t^{\frac{1}{3}}x))^{-\frac{2n+1}{2}}.$$
By Fubini's theorem,  we switch the order of integration on the right hand side of \eqref{eq:ap}, hence
\begin{align*}
\mathcal{A}_{p} (t) 
&= \frac{2^n (2n-1)!! }{ \Gamma(1-\alpha)} \int_{\R} K_{12}(x, x) e^{n t^{\frac{1}{3}} x} dx \int_0^1 \frac{s^{-\alpha}}{(1 + 4s\exp(t^{\frac{1}{3}}x))^{\frac{2n+1}{2}}} ds,
\end{align*}
Writing the integral w.r.t $s$ in the above display as $\int_0^1  = \int_0^{\infty} - \int_1^{\infty}$, we get $\mathcal{A}_{p} (t) = \AAp (t) - \AApp (t),$ where 
\begin{align}
\label{eq:AAp}
\AAp (t) &= \frac{2^n (2n-1)!!}{ \Gamma(1-\alpha)} \int_{\R} K_{12}(x, x) e^{n t^{\frac{1}{3}} x} dx \int_0^{\infty} \frac{s^{-\alpha}}{(1 + 4s\exp(t^{\frac{1}{3}}x))^{\frac{2n+1}{2}}} ds, \\
\label{eq:alphadecompose}
\AApp(t) &=\frac{2^n (2n-1)!!}{ \Gamma(1-\alpha)} \int_{\R} K_{12}(x, x) e^{n t^{\frac{1}{3}} x} dx \int_1^\infty \frac{s^{-\alpha}}{(1 + 4s\exp(t^{\frac{1}{3}}x))^{\frac{2n+1}{2}}} ds.
\end{align}
To conclude our proof of Proposition \ref{prop:mainprop}, it suffices to show the following propositions. 
\begin{prop}\label{prop:AAp}
For fixed $p, t_0 > 0$ there exists $C = C(p, t_0)$ such that for all $t > t_0$, $\frac{1}{C} e^{\frac{1}{3}p^3 t}  \leq \AAp (t) \leq C e^{\frac{1}{3}p^3 t}$. 
\end{prop}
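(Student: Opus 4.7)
The plan is to first integrate out the $s$-variable exactly using Lemma \ref{lem:gamma}, then reduce the remaining $x$-integral to the form already analyzed in Lemma \ref{lem:k12laplace}. Apply Lemma \ref{lem:gamma} with $\gamma = 4 e^{t^{1/3} x}$ and $\beta = \tfrac{2n+1}{2}$: the hypothesis $\alpha < 1$ holds because $\alpha = p+1-n \in [0,1)$, and $\alpha + \beta > 1$ holds trivially since $\beta \geq \tfrac{3}{2}$. This gives
\[
\int_0^\infty \frac{s^{-\alpha}}{(1+4 s e^{t^{1/3}x})^{(2n+1)/2}} \, ds \;=\; 4^{\alpha-1} e^{(\alpha-1) t^{1/3} x} \, \Be\!\left(1-\alpha, \, \tfrac{2n-1}{2}+\alpha\right).
\]
Substituting into \eqref{eq:AAp} and noting that $n + \alpha - 1 = p$, I collect all $p$-dependent constants into a single positive constant $C_p$ and obtain
\[
\AAp(t) \;=\; C_p \int_{\R} K_{12}(x,x) \, e^{p\, t^{1/3} x} \, dx.
\]

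Next I rescale $x \mapsto t^{2/3} x$, so that $dx = t^{2/3} dx$ and $e^{p t^{1/3} x}$ becomes $e^{p t x}$:
\[
\AAp(t) \;=\; C_p \, t^{2/3} \int_{\R} K_{12}(t^{2/3} x, t^{2/3} x) \, e^{p t x} \, dx.
\]
Splitting the integral as $\int_\R = \int_0^\infty + \int_{-\infty}^0$, the first piece is precisely the object controlled by Lemma \ref{lem:k12laplace}, which yields matching upper and lower bounds of order $t^{-2/3} e^{p^3 t/3}$ for $t > t_0$. Multiplying by the prefactor $t^{2/3}$ produces the target growth rate $e^{p^3 t/3}$ up to multiplicative constants depending only on $p$ and $t_0$.

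It remains to show that the $(-\infty, 0)$ piece is dominated by the $(0,\infty)$ piece. I expect to invoke the corresponding part of Lemma \ref{lem:k12bound} (for $x \leq 0$) to obtain a polynomial-in-$|x|$ pointwise bound for $K_{12}(x,x)$; since $e^{p t x}$ decays exponentially in $x$ on the negative half-line with decay rate $pt \to \infty$, the resulting integral is at worst polynomial in $t$, hence exponentially negligible compared with $e^{p^3 t/3}$. In particular, for $t$ large the positive-half contribution provides both upper and lower bounds of the claimed form, and for $t_0 \leq t \leq T_0$ (any fixed bounded range) one absorbs the negative-half contribution into the constant.

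The main obstacle is the last step: obtaining a pointwise bound on $K_{12}(x,x)$ for $x < 0$ that is sufficiently well-behaved under the rescaling $x \mapsto t^{2/3} x$. Because $\mathrm{Ai}(x+\lambda)$ oscillates on negative arguments, one cannot simply use the positive-argument decay estimates; one must use that the oscillations cause cancellations in the relevant integrals defining $K_{12}$, or alternatively bound $|K_{12}(x,x)|$ by something like $C(1+|x|)^{1/2}$ for $x < 0$. Provided this bound (or its analog in Lemma \ref{lem:k12bound}) is available, the proposition follows by combining it with the exponential decay of $e^{ptx}$ to show the negative-half tail contributes $O(t^{K})$ for some fixed $K$, which is swallowed by $e^{p^3 t/3}$.
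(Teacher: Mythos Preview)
Your proposal is correct and follows essentially the same route as the paper: integrate out $s$ via Lemma \ref{lem:gamma}, rescale $x \mapsto t^{2/3}x$, split into positive and negative half-lines, invoke Lemma \ref{lem:k12laplace} on $[0,\infty)$, and control the $(-\infty,0)$ piece with the pointwise bound on $K_{12}(x,x)$ from Lemma \ref{lem:k12bound}. The bound you hesitate about is exactly Lemma \ref{lem:k12bound} \ref{item:k12 xnegative}, namely $0 \le K_{12}(x,x) \le C\sqrt{1-x}$ for $x \le 0$; with it the negative-half contribution is in fact bounded by a constant (not merely polynomial in $t$), since $\int_{-\infty}^0 (1 - t^{2/3}x)^{1/2} e^{ptx}\,dx$ is uniformly bounded for $t \ge t_0$.
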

\begin{prop}\label{prop:AApp}
For fixed $p, t_0 > 0$, there exists $C = C(p, t_0)$ such that for all $t > t_0$,  $|\AApp (t)| \leq C$. 
\end{prop}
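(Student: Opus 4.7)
The plan is to exploit the restriction $s\in[1,\infty)$ to extract enough decay from the inner $s$-integral that it cancels the growing factor $e^{nt^{1/3}x}$ in \eqref{eq:alphadecompose} and leaves an integrand in $x$ that is absolutely integrable against $K_{12}(x,x)$, uniformly for $t\geq t_0$. To this end I would split the $x$-integral at the origin and estimate the $s$-integral separately on $\{x\geq 0\}$ and $\{x<0\}$.

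First, for $x\geq 0$ and $s\geq 1$ we have $4se^{t^{1/3}x}\geq 1$, hence $1+4se^{t^{1/3}x}\geq 4se^{t^{1/3}x}$. This yields
\begin{equation*}
\int_1^\infty \frac{s^{-\alpha}\,ds}{(1+4se^{t^{1/3}x})^{(2n+1)/2}} \leq 4^{-(2n+1)/2}\, e^{-(2n+1)t^{1/3}x/2}\int_1^\infty s^{-\alpha-(2n+1)/2}\,ds,
\end{equation*}
and the last integral is a finite constant since $\alpha+(2n+1)/2>1$ for $n\geq 1$. For $x<0$, I would make the change of variables $u=4se^{t^{1/3}x}$, which turns the $s$-integral into $\tfrac{1}{4^{1-\alpha}}e^{(\alpha-1)t^{1/3}x}\int_{4e^{t^{1/3}x}}^\infty u^{-\alpha}(1+u)^{-(2n+1)/2}du$, and the latter integral is bounded above by $B(1-\alpha,(2n+1)/2+\alpha-1)$, uniformly in $x<0$ and $t$. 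Multiplying these bounds by $e^{nt^{1/3}x}$, the combined $x$-dependent factor becomes $e^{-t^{1/3}x/2}$ on $x\geq 0$ and $e^{pt^{1/3}x}$ on $x<0$ (using $p=n-1+\alpha$).

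Plugging into \eqref{eq:alphadecompose} gives
\begin{equation*}
|\AApp(t)|\leq C\int_0^\infty |K_{12}(x,x)|\,e^{-t^{1/3}x/2}\,dx + C\int_{-\infty}^0 |K_{12}(x,x)|\,e^{pt^{1/3}x}\,dx,
\end{equation*}
with $C=C(p)$. For $t\geq t_0$ and $x\geq 0$, $e^{-t^{1/3}x/2}\leq 1$, and the super-exponential decay of $K_{12}(x,x)$ at $+\infty$ provided by Lemma \ref{lem:k12bound}\ref{item:k12 xpositive} makes the first integral a finite constant independent of $t$. For $x\leq 0$ and $t\geq t_0$, the exponent satisfies $e^{pt^{1/3}x}\leq e^{pt_0^{1/3}x}$, so the second integral is dominated by $\int_{-\infty}^0 |K_{12}(x,x)|\,e^{pt_0^{1/3}x}dx$, which is independent of $t$.

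The main obstacle I anticipate is the second integral: the oscillatory tail of the Airy function causes $K_{12}(x,x)$ to grow polynomially (like $|x|^{1/2}$) as $x\to-\infty$, so without the $e^{pt^{1/3}x}$ factor the $x$-integral would diverge. Thus one has to rely on the bound for $K_{12}$ on the left tail given by Lemma \ref{lem:k12bound} (presumably a polynomial growth bound), and the observation that for $t\geq t_0$ the pointwise bound $e^{pt^{1/3}x}\leq e^{pt_0^{1/3}x}$ provides uniform control; the remaining $x$-integral is then finite and $t$-independent, proving the claim.
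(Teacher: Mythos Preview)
Your proposal is correct and follows essentially the same route as the paper: split the $x$-integration at the origin, on $\{x\geq 0\}$ use $1+4se^{t^{1/3}x}\geq 4se^{t^{1/3}x}$ to extract $e^{-t^{1/3}x/2}$, on $\{x<0\}$ bound the $s$-integral by the Beta integral (the paper does this by relaxing $[1,\infty)$ to $[0,\infty)$ and invoking Lemma~\ref{lem:gamma}, which is exactly your change of variables followed by extending the $u$-range), and then appeal to Lemma~\ref{lem:k12bound} together with $e^{pt^{1/3}x}\leq e^{pt_0^{1/3}x}$ for $x\leq 0$. The only cosmetic difference is that the paper also records the easy lower bound $\AApp(t)\geq 0$ coming from $K_{12}(x,x)\geq 0$.
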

\noindent Let us first complete our proof of Proposition \ref{prop:mainprop} using Proposition \ref{prop:AAp} and \ref{prop:AApp}. Recall $\mathcal{A}_p (t) = \AAp(t) - \AApp (t)$. With the help of these lemmas, it is clear that $\AAp (t)$ is the dominating term for large enough $t$. Hence, $$\lim_{t \to \infty} \frac{1}{t} \log \AA(t) = \lim_{t \to \infty} t^{-1}\log \AAp(t) = \frac{p^3}{3}.$$ 
This completes our proof of  Proposition \ref{prop:mainprop}.
\end{proof} 
 For the rest of this section, we prove Proposition \ref{prop:AAp} and  \ref{prop:AApp} respectively.
\begin{proof}[Proof of Proposition \ref{prop:AAp}]
Applying Lemma \ref{lem:gamma} to  the second integral on the right hand side of  \eqref{eq:AAp} (with $\beta = \frac{2n + 1}{2}$ and $\gamma = 4\exp(t^{\frac{1}{3}} x)$), we see that (recall $p = n-1+\alpha$)
$$
\AAp (t)  = c_p \int_{-\infty}^{\infty} K_{12} (x, x) \exp\big(p t^{\frac{1}{3}} x\big) dx.
$$
where $c_p$ is a constant that equals $\frac{2^n (2n-1)!!\, 4^{\alpha-1}}{ \Gamma(1-\alpha)} \Be\big(1-\alpha, \frac{2n-1}{2} + \alpha\big)$. We will not use this explicit expression of $c_p$ and later we just write it as a generic constant $C$. By a change of variable $x \to t^{\frac{2}{3}} x$, we have  
$
\AAp(t) = C t^{\frac{2}{3}} \int_{-\infty}^{\infty} K_{12} (t^{2/3} x, t^{2/3} x) \exp(p t x) dx
$.
Decompose the integral region into $(-\infty, 0) \cup [0, \infty)$, we obtain
\begin{align}\label{eq:App}
\AAp(t) = C t^{\frac{2}{3}} &\bigg( \int_0^\infty K_{12} (t^{\frac{2}{3}} x, t^{\frac{2}{3}} x) e^{p t x} dx +  \int_{-\infty}^0 K_{12}(t^{\frac{2}{3}} x, t^{\frac{2}{3}} x) e^{p t x} dx\bigg)
\end{align}
For the first integral on the right hand side of the \eqref{eq:App}, referring to Lemma \ref{lem:k12laplace}, we have  
\begin{align}\label{eq:temp23}
\frac{1}{C_1} t^{-\frac{2}{3}} e^{\frac{p^3 t}{3}} \leq \int_0^\infty K_{12} (t^{\frac{2}{3}} x, t^{\frac{2}{3}} x) e^{p t x} dx \leq C_1 t^{-\frac{2}{3}} e^{\frac{p^3 t}{3}}.
\end{align}
For the second integral on the right hand side of \eqref{eq:App}, we apply Lemma \ref{lem:k12bound} (ii) and get $t > t_0$
\begin{align}\label{eq:temp24}
0\leq \int_{-\infty}^{0} K_{12}(t^{\frac{2}{3}} x, t^{\frac{2}{3}} x) e^{p t x} dx
\leq 
C_2\int_{-\infty}^0 \big(1 - t^{\frac{2}{3}} x \big)^\frac{1}{2} e^{p t x} dx \leq  C_3.
\end{align}
where $C_1, C_2, C_3$ only depends on $p, t_0$. Combining \eqref{eq:App}, \eqref{eq:temp23} and \eqref{eq:temp24}, we know that 
$C C_1^{-1} e^{\frac{p^3 t}{3}}  \leq \AAp(t) \leq C C_1 e^{\frac{p^3 t}{3}} + C C_3 t^{\frac{2}{3}}$. Note that $t^{\frac{2}{3}}$ can be upper bounded by a constant times $e^{\frac{p^3}{3} t}$ when $t > t_0$, we conclude Proposition \ref{prop:AAp}.
\end{proof}
\begin{proof}[Proof of Proposition \ref{prop:AApp}]
 Recall the expression of  $\AApp (\tt)$ from \eqref{eq:alphadecompose}. Since $K_{12}(x, x)$ is non-negative for all $x$, $\AApp(t)$ is lower bounded by $0$. To get the upper bound, we decompose $\AApp(t) =\frac{2^n (2n-1)!! }{ \Gamma(1-\alpha)} (\AAA_1 + \AAA_2)$ where
\begin{align*}
\numberthis \label{eq:AAA1}
\AAA_1 &= \int_0^{\infty} K_{12}(x, x) \exp\big(n t^{\frac{1}{3}} x\big) dx \int_1^{\infty} \frac{s^{-\alpha}}{\big(1 + 4s \exp(t^{\frac{1}{3}} x)\big)^{\frac{2n+1}{2}}} ds,\\
\AAA_2 &=  \int_{-\infty}^{0} K_{12}(x, x) \exp\big(n t^{\frac{1}{3}} x\big) dx \int_1^{\infty} \frac{s^{-\alpha}}{\big(1 + 4s \exp(t^{\frac{1}{3}} x)\big)^{\frac{2n+1}{2}}} ds. 
\end{align*}
Let us upper bound $\AAA_1$ and $\AAA_2$ respectively. We start with $\AAA_1$, using $1 + 4s \exp\big(t^{\frac{1}{3}} x\big) \geq 4s \exp\big(t^{\frac{1}{3}} x\big)$, 
\begin{align*}
\int_1^{\infty} \frac{s^{-\alpha}}{\big(1 + 4s \exp(t^{\frac{1}{3}} x)\big)^{\frac{2n+1}{2}}} 
\leq \int_1^{\infty} s^{-\alpha} \Big(4s \exp\big(t^{\frac{1}{3}} x\big)\Big)^{-\frac{2n+1}{2}} ds = \frac{\exp(-\frac{2n+1}{2} t^{\frac{1}{3}} x)}{2^{2n+1} \big(\frac{2n-1}{2} + \alpha\big)}.
\end{align*}
Applying this inequality to the right hand side of \eqref{eq:AAA1}, we have
$\AAone \leq C \int_0^{\infty} K_{12} (x, x) \exp(-\frac{1}{2} t^{\frac{1}{3}} x) dx.$
Using Lemma \ref{lem:k12bound} (i), for all $t > 0$, there exists a constant $C_1$ such that 
\begin{equation*}
\AAone \leq C \int_0^{\infty} \frac{e^{-\frac{2}{3} x^{\frac{3}{2}}}}{(1+x)^\frac{1}{4}} e^{-\frac{1}{2} t^{\frac{1}{3}} x} dx \leq C\int_0^{\infty} \frac{e^{-\frac{2}{3} x^{\frac{3}{2}}}}{(1+x)^{\frac{1}{4}}} dx  = C_1.
\end{equation*}
We continue to upper bound $\AAtwo$. Relaxing the integral region from $[1, \infty)$ to $[0, \infty)$, we get
\begin{align}\notag
\int_1^{\infty} \frac{s^{-\alpha}}{\big(1 + 4s \exp(t^{\frac{1}{3}} x)\big)^{\frac{2n+1}{2}}} &\leq \int_0^{\infty} \frac{s^{-\alpha}}{\big(1 + 4s \exp(t^{\frac{1}{3}} x)\big)^{\frac{2n+1}{2}}} ds
= 4^{\alpha-1}  \Be\Big(1-\alpha, \frac{2n-1 + 2\alpha}{2}\Big) e^{(\alpha-1) t^{\frac{1}{3}} x}.
\end{align}
The equality above follows from a change of variable $s \to \frac{1}{4} \exp(-t^{\frac{1}{3}} x) s$ and Lemma \ref{lem:gamma}. Due to the above display (set the product of $4^{\alpha-1}$ and the beta function to be a constant $C$)
\begin{equation*}
\AAtwo \leq   C \int_{-\infty}^0 K_{12}(x, x) \exp\big((n+\alpha-1)t^{\frac{1}{3}} x\big) dx = C \int_{-\infty}^0 K_{12}(x, x) \exp\big(pt^{\frac{1}{3}} x\big) dx.
\end{equation*}
Using Lemma \ref{lem:k12bound} to upper bound $K_{12} (x, x)$ for negative $x$, there exists a constant $C_2$ such that  for all $t > t_0$, 
\begin{equation*}
\AAtwo \leq C \int_{-\infty}^0 \sqrt{1-x} e^{p \tt^{\frac{1}{3}} x} dx = C \int_{-\infty}^0 \sqrt{1-x} e^{p t_0^{\frac{1}{3}} x} dx=  C_2.
\end{equation*}
Having $\AAone, \AAtwo$ upper bounded by a constant uniformly for $\tt> t_0$, we conclude our lemma by recalling that $\AApp (\tt)$ is a constant multiple of $\AAone + \AAtwo$.
\end{proof}
\section{Controlling the Pfaffian and Proof of Lemma \ref{lem:intdevinterchange}}
\label{sec:prepare}
 In this section, we give two upper bounds of the $L$-th Pfaffian  $\text{Pf}\big[K(x_i, x_j)\big]_{i, j=1}^L$ uniformly for all $L$. This is the main technical contribution of our paper. The purpose is two folded. First,  these  upper bounds are the crucial inputs to the proof of Proposition \ref{prop:minor} presented in the next section. Secondly, they can be used to validate the interchange of derivative, integration and summation in Lemma \ref{lem:intdevinterchange}.
\subsection{Controlling the Pfaffian}\label{sec:controlpf} We obtain two upper bounds for  $\text{Pf}\big[K(x_i, x_j)\big]_{i, j=1}^L$ for all $L$. Each upper bound has its advantage. The first upper bound has slower growth in $L$ and slower exponential decay in $x_i$ as $x_i \to \infty$. The second upper bound has faster exponential decay in $x_i$ but also a more rapid growth in $L$. For the proof of Proposition \ref{prop:minor}, we will use both of the upper bounds to control various terms in $\BBpL$ depending on how large the $L$ is. To prove these bounds, we utilize  various bounds for $K_{ij}(x, y)$, $i, j \in \{1, 2\}$ that are established in Lemma \ref{lem:kernelbound}.   
\smallskip
\\
Define  $\f_{\alpha, \beta}(x) =  e^{-\alpha x^{\frac{3}{2}}} \mathbf{1}_{\{x \geq 0\}} + (1-x)^\beta \mathbf{1}_{\{x < 0\}}$. It is clear that  $\f_{\alpha_1, \beta_1} (x) \f_{\alpha_2, \beta_2} (x) = \f_{\alpha_1 + \alpha_2, \beta_1 + \beta_2} (x)$. In addition, for $\beta_1 \leq \beta_2$, we have $\f_{\alpha, \beta_1} (x) \leq \f_{\alpha, \beta_2} (x).$
\begin{prop}\label{prop:pfbound}
There exists constant $C$ such that for all $L \in \Z_{\geq 1}$, 
\begin{enumerate}[leftmargin = 20pt, label= (\roman*)]
\item \label{item:pfbound1}
$\big|\emph{Pf}\big[K(x_i, x_j)\big]_{i, j = 1}^L \big| \leq (2L)^{L/2}  C^L \prod_{i=1}^L \f_{\frac{1}{3}, 2} (x_i)$
\item \label{item:pfbound2}
$\big|\emph{Pf}\big[K(x_i, x_j)\big]_{i, j = 1}^L \big| \leq \sqrt{(2L)!}\, C^L \prod_{i=1}^L \f_{\frac{2}{3}, 2}(x_i)$
\end{enumerate}
\end{prop}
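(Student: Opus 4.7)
The plan is to reduce each part of Proposition \ref{prop:pfbound} to a pointwise factored estimate on the scalar entries of $K$, combined with one of two standard routes for controlling the Pfaffian. I expect Lemma \ref{lem:kernelbound} to provide, for each $a,b \in \{1,2\}$, bounds of the form
\begin{equation*}
|K_{ab}(x,y)| \,\leq\, C\,\f_{\frac{1}{6},1}(x)\,\f_{\frac{1}{6},1}(y) \qquad \text{and} \qquad |K_{ab}(x,y)| \,\leq\, C\,\f_{\frac{1}{3},1}(x)\,\f_{\frac{1}{3},1}(y),
\end{equation*}
the right-hand sides squaring to $\ft(x)\ft(y)$ and $\ftt(x)\ftt(y)$ respectively; these are the two separated bounds I will feed into the Pfaffian analysis, and I shall not worry here about how one proves them.

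For \ref{item:pfbound1}, I would exploit Hadamard's inequality through the identity $\text{Pf}(\widetilde{K})^2 = \det(\widetilde{K})$, where $\widetilde{K} := [K(x_i,x_j)]_{i,j=1}^L$ is regarded as a $2L \times 2L$ scalar matrix. Taking the first bound above, factor $\widetilde{K} = D M D$, where $D$ is the diagonal matrix with consecutive entries $\f_{\frac{1}{6},1}(x_1), \f_{\frac{1}{6},1}(x_1), \ldots, \f_{\frac{1}{6},1}(x_L), \f_{\frac{1}{6},1}(x_L)$ and $M$ has entries of absolute value at most $C$. Hadamard applied to $M$ (each of its $2L$ rows has Euclidean norm at most $\sqrt{2L}\,C$) gives $|\det M| \leq (2L)^L C^{2L}$, while $(\det D)^2 = \prod_{i=1}^L \f_{\frac{1}{6},1}(x_i)^4 = \prod_{i=1}^L \ft(x_i)^2$. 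Multiplying and taking a square root yields \ref{item:pfbound1}.

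For \ref{item:pfbound2}, I would instead use the combinatorial expansion of the Pfaffian as a signed sum over the $(2L-1)!!$ perfect matchings $\pi$ of $\{1,\ldots,2L\}$. Substituting the second factored bound into each factor in a matching's product, and observing that each $i \in \{1,\ldots,L\}$ corresponds to exactly two row/column indices of $\widetilde{K}$ — which therefore sit in two distinct pairs of $\pi$ — the function $\f_{\frac{1}{3},1}(x_i)$ appears exactly twice in the product, contributing $\f_{\frac{1}{3},1}(x_i)^2 = \ftt(x_i)$. Hence each matching term is at most $C^L \prod_{i=1}^L \ftt(x_i)$, and summing over all matchings gives the prefactor $(2L-1)!!$. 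The elementary inequality $(2L-1)!! \leq \sqrt{(2L)!}$ (which follows from $(2L-1)!!\,(2L)!! = (2L)!$ together with $(2L-1)!! \leq (2L)!!$) completes \ref{item:pfbound2}.

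Both halves thus reduce to a short linear-algebraic computation once the factored entry-wise bounds are in hand, and the two statements correspond to the natural trade-off between the two available bounds: the weaker separated bound feeds into Hadamard and yields the milder $(2L)^{L/2}$ growth at the cost of only $\ft$ decay, while the stronger separated bound feeds into the matching expansion and yields the stronger $\ftt$ decay at the cost of the larger $\sqrt{(2L)!}$ prefactor. The main obstacle — really the only substantive work — lies in establishing the two factored bounds for all four sign combinations of $x,y$: the Airy double integrals defining $K_{11},K_{12},K_{22}$ must be controlled using $\Ai$ asymptotics for large positive arguments, while for negative arguments one must rely on the polynomial piece $(1-x)^{\beta}$ of $\f_{\alpha,\beta}$, particularly to absorb the non-decaying $-\text{sgn}(x-y)/4$ contribution in $K_{22}$.
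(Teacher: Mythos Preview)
Your overall architecture is sound, but the proposal rests on an assumption that is false: a \emph{symmetric} factored bound of the form
\[
|K_{ab}(x,y)| \leq C\,\f_{\alpha,1}(x)\,\f_{\alpha,1}(y)\qquad (\alpha=\tfrac{1}{6}\ \text{or}\ \tfrac{1}{3})
\]
cannot hold for $a=b=2$. Indeed $K_{22}(x,y)$ contains the term $-\tfrac{1}{4}\,\text{sgn}(x-y)$, so for $x,y>0$ with $x\neq y$ one has $|K_{22}(x,y)|\to \tfrac{1}{4}$ as $x,y\to+\infty$, whereas $\f_{\alpha,1}(x)\f_{\alpha,1}(y)=e^{-\alpha x^{3/2}}e^{-\alpha y^{3/2}}\to 0$. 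You note this sgn contribution at the end, but the remedy you suggest (absorbing it into the polynomial piece $(1-x)^\beta$) only applies when $x<0$; for both arguments positive there is nothing to absorb it into. Consequently neither the $DMD$ factorization in your part~\ref{item:pfbound1} nor the matching expansion in your part~\ref{item:pfbound2} goes through as written.

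The repair is to replace the symmetric bound by a \emph{parity-dependent} one: Lemma~\ref{lem:kernelbound} actually yields $|K_{11}(x,y)|\leq C\f_{\frac{2}{3},\frac{3}{4}}(x)\f_{\frac{2}{3},\frac{3}{4}}(y)$, $|K_{12}(x,y)|\leq C\f_{\frac{2}{3},\frac{3}{4}}(x)\f_{0,\frac{3}{4}}(y)$, and $|K_{22}(x,y)|\leq C\f_{0,\frac{3}{4}}(x)\f_{0,\frac{3}{4}}(y)$ (the last simply because $K_{22}$ is bounded). In other words, odd row/column indices of the $2L\times 2L$ matrix carry the full $\f_{\frac{2}{3},\cdot}$ decay, while even indices carry only $\f_{0,\cdot}\geq 1$. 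With this asymmetric bound your matching argument for \ref{item:pfbound2} works verbatim: in any perfect matching each index $1,\dots,2L$ appears exactly once, so the product picks up $\prod_{k=1}^L \f_{\frac{2}{3},\frac{3}{4}}(x_k)\f_{0,\frac{3}{4}}(x_k)=\prod_k \f_{\frac{2}{3},\frac{3}{2}}(x_k)\leq\prod_k\ftt(x_k)$, and $(2L-1)!!\leq\sqrt{(2L)!}$ finishes. (The paper instead expands $\det=\text{Pf}^2$ over permutations, but your matching route is equally valid and arguably cleaner.) For \ref{item:pfbound1} the $DMD$ factorization breaks down because the entrywise bounds are not products; the paper instead applies Hadamard directly, bounding the $\ell^\infty$-norm of each row: odd rows by $C\f_{\frac{2}{3},\frac{5}{4}}(x_k)$ and even rows by $C\f_{0,\frac{3}{4}}(x_k)$, whose geometric mean is $\f_{\frac{1}{3},1}(x_k)\leq\ft(x_k)$.
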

 Looking at the growth of these upper bounds in terms of $L$, by Stirling's formula, $\sqrt{(2L)!} \sim e^{-L} (2L)^{L + \frac{1}{4}} (2\pi)^{\frac{1}{4}}$ which grows faster than $(2L)^{\frac{L}{2}}$. On the other hand, in terms of $x$, as $x_i \to \infty$, the second upper bound decays with speed $\exp(-\frac{2}{3} x_i^{\frac{3}{2}})$, which is faster than that  the first upper bound whose exponent is $\exp(-\frac{1}{3} x_i^{\frac{3}{2}})$.
\begin{proof}[Proof of Proposition \ref{prop:pfbound} \ref{item:pfbound1}]
The idea for proving Proposition \ref{prop:pfbound} \ref{item:pfbound1} is as follows. Up to a sign, the Pfaffian of a matrix equals the square root of its determinant. We apply Hadamard's inequality to upper bound the determinant in terms of the product of $\ell_\infty$-norms of each row of the matrix. Finally, we apply Lemma \ref{lem:kernelbound} to control these $\ell_\infty$-norms.  
\smallskip
\\
Now we start our proof. It is well-known that
\begin{equation}\label{eq:pfrootdet}
\Big|\pfll\Big| = \sqrt{\det\big[K(x_i, x_j)\big]_{i, j = 1}^L}.
\end{equation}
Notice that each entry $K(x_i, x_j)$ is a $2 \times 2$ matrix and $\big[K(x_i, x_j)\big]_{i, j = 1}^L$ is a $2L \times 2L$ matrix. Denote $\row_i$ to be the $i$-th row vector of this matrix. Applying Hadamard's inequality for the determinant on the right hand side above, we see that 
\begin{equation}\label{eq:hadamard}
\Big|\pfll \Big| \leq (2L)^{\frac{L}{2}} \sqrt{\prod_{i = 1}^{2L} \|\row_i\|_{\infty}}, 
\end{equation}
where $\|\cdot\|_{\infty}$ denotes the $\ell^{\infty}$-norm of a vector. It suffices to upper bound each $\|\row_i\|_{\infty}$, we do that according to the parity of $i$. For each $k = 1, \dots , L$, the vector $\row_{2k-1}$ is composed of the elements $K_{11}(x_k, x_j)$ and $K_{12}(x_k, x_j)$ for $j = 1, \dots, L$, thus 
\begin{align*}
\|\row_{2k-1}\|_{\infty} &= \max_{j = 1, \dots, L}\Big(\max\big(|K_{11}(x_k, x_j)|, |K_{12}(x_k, x_j)|\big)\Big)
\end{align*} 
Using the Lemma \ref{lem:kernelbound} \ref{item:k11} and \ref{item:k12} for  $K_{11}$ and $K_{12}$ respectively, we know that 
there exists constant $C$ such that $|K_{11} (x_k, x_j)| \leq C \f_{\frac{2}{3}, \frac{5}{4}} (x_k)$ and $|K_{12} (x_k, x_j)| \leq C \f_{\frac{2}{3}, \frac{3}{4}} (x_k)$. This implies that
$\|\row_{2k-1}\|_{\infty} \leq C \f_{\frac{2}{3}, \frac{5}{4}} (x_k).$
Similarly, the row vector $\row_{2k}$ is composed of $K_{21}(x_k, x_j)$ and $K_{22}(x_k, x_j)$, using Lemma \ref{lem:kernelbound} \ref{item:k12} and \ref{item:k22} for $K_{12}$ and $K_{22}$ respectively (note that $|K_{21}(x_k, x_j)| = |K_{12} (x_j, x_k)| \leq C\f_{0, \frac{3}{4}} (x_k)$), we get
\begin{align*}
\|\row_{2k}\|_{\infty} =  \max_{j = 1, \dots, L}\Big(\max\big(|K_{21}(x_k, x_j)|, |K_{22}(x_k, x_j)|\big)\Big) \leq C\f_{0, \frac{3}{4}} (x_k) 
\end{align*}
Inserting the upper bounds for $\|r_{2k- 1}\|$ and $\|r_{2k}\|$ into the right hand side of \eqref{eq:hadamard}, we have 
\begin{align*}
\Big|\pfll \Big| &\leq (2L)^{\frac{L}{2}} C^L \sqrt{\prod_{k=1}^L \f_{\frac{2}{3}, \frac{5}{4}}(x_k)} \cdot \sqrt{\prod_{k=1}^L \f_{0, \frac{3}{4}} (x_k)} \leq (2L)^{\frac{L}{2}} C^L \prod_{k=1}^L \f_{\frac{1}{3}, 2} (x_k).
\end{align*}
The last equality follows from $\sqrt{\f_{\frac{2}{3}, \frac{5}{4}} (x_k) \f_{0, \frac{3}{4}} (x_k)} = \f_{\frac{1}{3}, 1} (x_k) \leq \f_{\frac{1}{3}, 2}(x_k)$. This completes our proof.
\end{proof}
 To prove Proposition \ref{prop:pfbound} \ref{item:pfbound2}, we upper bound the determinant in a different way. Instead of using Hadamard's inequality, we work with the permutation expansion of the determinant and seek to upper bound each term therein. We introduce some notations. Rewrite the $2L \times 2L$ matrix $\big[K(x_i, x_j)\big]_{i, j = 1}^{L}$  as $\big[\D(i, j)\big]_{i, j = 1}^{2L}$ in a way that for all $i, j \in \{1, \dots, L\}$,
\begin{align*}
&\D(2i-1, 2j-1) = K_{11}(x_i, x_j),\qquad \D(2i-1, 2j) = K_{12}(x_i, x_j),\\
&\D(2i, 2j-1) = K_{21}(x_i, x_j),\qquad\qquad \D(2i, 2j) = K_{22}(x_i, x_j). 
\end{align*}
Note that we suppress the dependence on $x$ in the notation of $\D$.
Define the maps $\theta: \{1, \dots, 2L \} \to \{1, \dots, L\}$ and $\tau: \{1, \dots, 2L \} \to \{0, 1\}$ such that  $\theta (n) = \lfloor \frac{n}{2}\rfloor$ and  $\tau(n) = n - 2 \lfloor n/2 \rfloor$. It is clear that $$(\theta, \tau) : \{1, \dots, 2L\} \to \{1, \dots, L\} \times \{0, 1\}$$ 
is a bijection. 
\begin{lemma}\label{lem:Dentrybound}
There exists a constant $\C$ such that for arbitrary $L$ and $i, j \in \{1, \dots, 2L\}$,  $$|\D(i, j)| \leq \C\f_{\frac{2}{3}\tau(i), \frac{3}{4}} (x_{\theta(i)}) \f_{\frac{2}{3}\tau(j), \frac{3}{4}} (x_{\theta(j)}) .$$
\end{lemma}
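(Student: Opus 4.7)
The plan is to reduce the claim to four two-variable bounds on the entries of the matrix kernel, one per parity pattern of $(i, j)$. Since every $\D(i, j)$ equals one of $K_{11}, K_{12}, K_{21}, K_{22}$ evaluated at $(x_{\theta(i)}, x_{\theta(j)})$, and $\tau$ only toggles the $\f$-exponent $\tfrac{2}{3}\tau$ between $0$ and $\tfrac{2}{3}$ in the relevant slot, the lemma reduces to establishing
\begin{align*}
|K_{11}(x, y)| &\leq C\, \f_{\frac{2}{3}, \frac{3}{4}}(x)\, \f_{\frac{2}{3}, \frac{3}{4}}(y), \\
|K_{12}(x, y)| &\leq C\, \f_{\frac{2}{3}, \frac{3}{4}}(x)\, \f_{0, \frac{3}{4}}(y), \\
|K_{22}(x, y)| &\leq C\, \f_{0, \frac{3}{4}}(x)\, \f_{0, \frac{3}{4}}(y),
\end{align*}
with $K_{21}(x, y) = -K_{12}(y, x)$ supplying the fourth case.

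The analytic inputs are the classical Airy estimates: super-exponential decay of $|\Ai(u)|$ and $|\Ai'(u)|$ for $u \geq 0$, polynomial control by $(1 - u)^{1/4}$-type envelopes for $u \leq 0$, and the uniform bound $\bigl|\int_{-\infty}^y \Ai(\lambda)\, d\lambda\bigr| \leq 1$; these should already be recorded as the one-variable estimates of Lemma \ref{lem:kernelbound}. For $K_{11}$, I would insert the pointwise bounds into the integrand of \eqref{eq:thmk11} and carry out a direct $\lambda$-integration: for $x, y \geq 0$, convexity yields $(x + \lambda)^{3/2} \geq x^{3/2} + \tfrac{3}{2} x^{1/2} \lambda$, so factoring out $e^{-(2/3)(x^{3/2} + y^{3/2})}$ leaves an integral that converges absolutely and gives the product bound on the positive quadrant. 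The mixed-sign and both-negative regions are handled by splitting the $\lambda$-interval at $-\min(x, y, 0)$ and using the polynomial envelope on the oscillatory side, which fits inside $\f_{\frac{2}{3}, \frac{3}{4}}(\cdot)$ after some bookkeeping. For $K_{12}$, the first integral in \eqref{eq:thmk12} is handled the same way; the second term $\tfrac{1}{2}\Ai(x) \int_{-\infty}^y \Ai(\lambda)\, d\lambda$ contributes exponential decay only in $x$ and is bounded uniformly in $y$, which is exactly what $\f_{\frac{2}{3}, \frac{3}{4}}(x)\, \f_{0, \frac{3}{4}}(y)$ allows.

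The most delicate case is $K_{22}$: the term $-\tfrac{1}{4}\text{sgn}(x - y)$ does not vanish as $x, y \to +\infty$, which is precisely why the bound must allow $\alpha = 0$ in both slots. The exponent $\beta = \tfrac{3}{4}$ originates from the elementary computation $\int_0^\infty |\Ai(x + \lambda)|\, d\lambda \leq C\,(1 - x)^{3/4}$ for $x \leq 0$, obtained via the substitution $v = x + \lambda$ together with the envelope $|\Ai(v)| \leq C(1+|v|)^{-1/4}$. Applying this estimate (and its variant for the inner integral $\int_\lambda^\infty \Ai(\cdot)\, d\mu$) to each of the four Airy double-integrals in \eqref{eq:thmk22}, and using $|\text{sgn}| \leq 1$ for the last term, shows that every piece is dominated by $C\, \f_{0, \frac{3}{4}}(x)\, \f_{0, \frac{3}{4}}(y)$. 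The main obstacle I expect is bookkeeping rather than genuinely new analysis: one must partition the plane into the four sign regions and verify the product form of the bound on each, with particular care on the mixed-sign regions where inner and outer Airy integrals behave differently. Once the four pointwise estimates are in hand, the lemma follows by direct substitution into $\D(i, j)$ and matching $\tfrac{2}{3}\tau \in \{0, \tfrac{2}{3}\}$ with the slot-specific decay of the relevant $K_{pq}$.
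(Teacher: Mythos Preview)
Your reduction to the four parity cases and the three product bounds on $K_{11}, K_{12}, K_{22}$ is exactly what the paper does, and your sketch of how to derive them from Airy asymptotics is essentially the content of the appendix Lemma~\ref{lem:kernelbound}. So the proposal is correct.

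That said, you are working harder than necessary. The paper does not re-derive two-variable product bounds inside this proof; it simply quotes Lemma~\ref{lem:kernelbound}, which already records
\[
|K_{11}(x,y)|\leq C\,\f_{\frac23,\frac34}(x)\,\f_{\frac23,\frac34}(y),\qquad
|K_{12}(x,y)|\leq C\,\f_{\frac23,\frac34}(x),\qquad
|K_{22}(x,y)|\leq C\,\f_{0,\frac34}(x),
\]
and then uses the trivial observation $\f_{0,\frac34}(y)\geq 1$ for all $y$ (since $\f_{0,\frac34}(y)=1$ for $y\geq 0$ and $(1-y)^{3/4}\geq 1$ for $y<0$) to turn the one-variable bounds for $K_{12}$ and $K_{22}$ into the desired product form for free. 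This bypasses all the sign-region bookkeeping you anticipate for $K_{12}$ and $K_{22}$; only the $K_{11}$ case genuinely needs a two-variable argument (done once, via Cauchy--Schwarz, in the proof of Lemma~\ref{lem:kernelbound}). Your direct route would arrive at the same place, but the $\f_{0,\frac34}\geq 1$ shortcut is the cleaner way to package it.
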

\begin{proof}
We divide our proof of the above inequality  into four cases. \emph{Case $1$}: $i, j$ are both odd. \emph{Case $2$}: $i, j$ are both even. \emph{Case $3$}: $i$ is odd and $j$ is even. \emph{Case $4$}: $i$ is even and $j$ is odd.
\smallskip
\\
\textbf{Case 1.} $i, j$ are both odd. Then $\tau(i) = \tau(j) = 1$ and $\D(i, j) = K_{11} (x_{\theta(i)}, x_{\theta(j)})$. By Lemma \ref{lem:kernelbound} \ref{item:k11}, 
$$|\D(i, j)| \leq C\f_{\frac{2}{3}, \frac{3}{4}} (x_{\theta(i)}) \f_{\frac{2}{3}, \frac{3}{4}} (x_{\theta(j)}) $$
\textbf{Case 2.} $i, j$ are both even. Then $\tau(i) = \tau(j) = 0$ and $\D(i, j) = K_{22}(x_{\theta(i)}, x_{\theta(j)})$. By Lemma \ref{lem:kernelbound} \ref{item:k22}, 
$$|\D(i, j)| \leq C \f_{0,\frac{3}{4}}(x_{\theta(i)}) \leq  C \f_{0, \frac{3}{4}}(x_{\theta(i)}) \f_{0, \frac{3}{4}}(x_{\theta(j)}).$$
where the second inequality above follows from $F_{0, \frac{3}{4}}(x) = 1 + (1-x)^{\frac{3}{4}} \mathbf{1}_{\{x \leq 0\}} \geq 1$. 
\smallskip
\\
\textbf{Case 3.} $i$ is odd and $j$ is even. Then $\tau(i) = 1$, $\tau(j) =0$ and $\D(i, j) = K_{12}(x_{\theta(i)}, x_{\theta(j)})$. Using Lemma \ref{lem:kernelbound} \ref{item:k12}, $$|\D(i, j)| \leq C \f_{\frac{2}{3}, \frac{3}{4}} (x_{\theta(i)}) \leq C \f_{\frac{2}{3}, \frac{3}{4}} (x_{\theta(i)}) \f_{0, \frac{3}{4}} (x_{\theta(j)}).$$
\textbf{Case 4}. $i$ is even and $j$ is odd. Then $\tau(i) = 0$, $\tau(j) = 1$ and $\D(i, j) = K_{21}(x_{\theta(i)}, x_{\theta(j)})$. This follows from \textbf{Case 3} and the fact $K_{21}(x, y) = -K_{12} (y, x)$. 
\smallskip
\\
Since our discussion covers all the cases, we conclude our proof of the lemma. 
\end{proof}
\begin{proof}[Proof of Proposition \ref{prop:pfbound} \ref{item:pfbound2}]
Referring to \eqref{eq:pfrootdet} and permutation expansion of the determinant, 
\begin{equation}\label{eq:pfdet}
\Big(\pfll\Big)^2 = \det\big[\D(x_i, x_j)\big]_{i, j = 1}^{2L} = \sum_{\sigma \in S_{2L}} \prod_{i=1}^L \D(i, \sigma(i)) 
\end{equation}
where $S_{2L}$ is the permutation group of $\{1, \dots, 2L\}$. Applying Lemma \ref{lem:Dentrybound} for any permutation $\sigma \in S_{2L}$, 
\begin{align*}
\Big|\prod_{i=1}^{2L} \D\big(i, \sigma(i)\big)\Big| \leq C^{2L} \prod_{i=1}^{2L} \f_{\frac{2}{3}\tau(i), \frac{3}{4}} (x_{\theta(i)}) \f_{\frac{2}{3}\tau(\sigma(i)), \frac{3}{4}} (x_{\theta(\sigma(i))})
&= 
C^{2L} \prod_{i=1}^{2L} \f_{\frac{2}{3}\tau(i), \frac{3}{4}} (x_{\theta(i)})  \prod_{i=1}^{2L} \f_{\frac{2}{3}\tau(i), \frac{3}{4}} (x_{\theta(i)})
\end{align*}
Using the bijectivity of $(\tau, \theta): \{1, 2\dots, 2L\} \to \{1, \dots, L\} \times \{0, 1\}$, we see that for arbitrary $\sigma \in S_{2L}$,
\begin{equation*}
\prod_{i=1}^{2L} \f_{\frac{2}{3}\tau(\sigma(i)), \frac{3}{4}} (x_{\theta(\sigma(i))}) = \prod_{i=1}^{2L} \f_{\frac{2}{3}\tau(i), \frac{3}{4}} (x_{\theta(i)}) = \prod_{i=1}^{L} \f_{\frac{2}{3}, \frac{3}{4}} (x_{i}) \f_{0, \frac{3}{4}} (x_i) = \prod_{i=1}^L \f_{\frac{2}{3}, \frac{3}{2}}(x_i)
\end{equation*} 
This implies that for all permutation $\sigma \in S_{2L}$,  the absolute value of $\prod_{i=1}^{2L} \D\big(i, \sigma(i)\big)$ can be upper bounded by $C^{2L} \big(\prod_{i=1}^L \f_{\frac{2}{3}, \frac{3}{2}} (x_i)\big)^2$. Referring to \eqref{eq:pfdet}, since there are $(2L)!$ terms in the summation on the right hand side,
\begin{align*}
\Big(\pfll \Big)^2 &\leq (2L)! \max_{\sigma \in S_{2L}}\Big( \prod_{i=1}^{2L} \D(i, \sigma(i))\Big) \leq  (2L)! C^{2L} \Big(\prod_{i=1}^L \f_{\frac{2}{3}, \frac{3}{2}} (x_i)\Big)^2
\end{align*}
Taking the square root for both sides above and using $\f_{\frac{2}{3}, \frac{3}{2}} (x_i) \leq \f_{\frac{2}{3}, 2} (x_i)$, this completes the proof.
\end{proof}
\subsection{Proof of Lemma \ref{lem:intdevinterchange}}\label{sec:dis}
In this subsection, we devote to justify that we can interchange the order of derivative, integration and summation for the right hand side \eqref{eq:dis} and provide a proof of Lemma \ref{lem:intdevinterchange}. For the ensuing discussion, we denote $\partial_s^k \phi_{s, t} (x)$ by $\phi_{s, t}^{(k)} (x)$. In particular, when $k = 0$, $\phi_{s, t}^{(k)} (x)$ coincides with $\phi_{s, t}(x)$. 
\begin{lemma}\label{lem:phibound}
Fix $k$. Recall that $\phi_{s, t} (x) = \frac{1}{\sqrt{1 + 4s \exp(t^{1/3} x)}} - 1$, there exists $C = C(k)$ such that  for all $s \geq 0$, 
\begin{equation*}
|\phi^{(k)}_{s, t}(x)|  \leq 1 \wedge 2 s e^{t^{\frac{1}{3}} x} \quad \emph{ if } k = 0; \qquad |\phi^{(k)}_{s, t}(x)| \leq C \big(e^{k t^{\frac{1}{3}} x} \wedge s^{-k}\big) \quad \emph{ if } k \in \Zp,
\end{equation*}
\end{lemma}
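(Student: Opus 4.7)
The plan is to read both bounds off an explicit formula for the derivative. Write $a = 4 e^{t^{1/3} x}$ and $g(s) = (1+as)^{-1/2}$, so that $\phi_{s,t}(x) = g(s) - 1$ and, for $k \geq 1$, $\phi_{s,t}^{(k)}(x) = g^{(k)}(s)$. Differentiating repeatedly gives
\begin{equation*}
g^{(k)}(s) = (-1)^k \frac{(2k-1)!!}{2^k}\, a^k\, (1 + as)^{-(2k+1)/2}, \qquad k \geq 1.
\end{equation*}
From here, both desired bounds for $k \geq 1$ follow from two elementary manipulations of the same formula.

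First, for the $e^{k t^{1/3} x}$ bound, simply drop the factor $(1+as)^{-(2k+1)/2} \leq 1$ to obtain
\begin{equation*}
|\phi_{s,t}^{(k)}(x)| \leq \frac{(2k-1)!!}{2^k}\, a^k = \frac{(2k-1)!! \cdot 4^k}{2^k}\, e^{k t^{1/3} x}.
\end{equation*}
Second, for the $s^{-k}$ bound, rewrite
\begin{equation*}
a^k (1+as)^{-(2k+1)/2} = \Bigl(\frac{a}{1+as}\Bigr)^k \cdot \frac{1}{\sqrt{1+as}},
\end{equation*}
and use $\frac{a}{1+as} \leq \frac{1}{s}$ (equivalent to $as \leq 1 + as$) along with $(1+as)^{-1/2} \leq 1$ to conclude $|\phi_{s,t}^{(k)}(x)| \leq \frac{(2k-1)!!}{2^k} s^{-k}$. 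Taking the minimum of the two bounds and setting $C(k) = \frac{(2k-1)!! \cdot 4^k}{2^k}$ finishes the case $k \geq 1$.

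For the case $k = 0$, the bound $|\phi_{s,t}(x)| \leq 1$ is immediate from $0 \leq (1+u)^{-1/2} \leq 1$ for $u = 4 s e^{t^{1/3} x} \geq 0$. For the other half, write
\begin{equation*}
-\phi_{s,t}(x) = 1 - \frac{1}{\sqrt{1+u}} = \frac{\sqrt{1+u} - 1}{\sqrt{1+u}} \leq \sqrt{1+u} - 1 \leq \frac{u}{2},
\end{equation*}
where the final inequality is just $\sqrt{1+u} \leq 1 + u/2$ (concavity of square root). This yields $|\phi_{s,t}(x)| \leq 2s e^{t^{1/3} x}$, completing the proof. No step here is genuinely difficult; the only thing to be careful about is the algebraic rearrangement $a^k(1+as)^{-(2k+1)/2} = (a/(1+as))^k (1+as)^{-1/2}$ that enables the $s^{-k}$ bound.
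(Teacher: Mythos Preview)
Your proof is correct and follows essentially the same approach as the paper: both compute the explicit formula $\phi_{s,t}^{(k)}(x) = (-2)^k (2k-1)!! e^{kt^{1/3}x}(1+4se^{t^{1/3}x})^{-(2k+1)/2}$ and then bound the denominator below by $1$ for the $e^{kt^{1/3}x}$ bound and by $(4se^{t^{1/3}x})^k$ for the $s^{-k}$ bound, while the $k=0$ case uses the elementary inequality $1 - (1+y)^{-1/2} \leq 1 \wedge y/2$. Your write-up is slightly more explicit in the algebra, but there is no substantive difference.
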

\begin{proof}
It is easily verified that for all $y \geq 0$, $1 - \frac{1}{\sqrt{1 + y}}  \leq 1 \wedge \frac{1}{2} y$. Taking $y = 4s \exp(\tt^{\frac{1}{3}} x) $ implies the first inequality. For the second inequality, we compute $\phi_{s, t}^{(k)} (x) = (-2)^k (2k-1)!! \frac{\exp(kt^{\frac{1}{3} }x)}{(1 + 4s\exp(t^{\frac{1}{3}}x))^{\frac{2k+1}{2}}}$. Lower bounding $(1 + 4s \exp(t^{\frac{1}{3}} x))^{\frac{2k+1}{2}}$ by $1$, we get $|\phi_{s,t}^{(k)} (x)| \leq 2^k (2k-1)!! \exp(k t^{\frac{1}{3}} x)$. On the other hand, we have 
\begin{equation*}
|\phi_{s, t}^{(k)} (x) | \leq 2^k (2k-1)!! \frac{\exp(k t^{\frac{1}{3} x})}{\big(1 + 4s \exp(t^{\frac{1}{3}} x)\big)^k} \leq s^{-k} 2^k (2k-1)!!
\end{equation*}
This completes our proof.
\end{proof}
 Let us introduce a few notations. Define 
$
\M(L, n) = \big\{\vec{m} = (m_1, \dots, m_L) \in \Zn^L, \sum_{i=1}^L m_i = n\big\}$ and for $\vec{m} \in \M(L, n)$,  we set  $\binom{n}{\vec{m}} = \frac{n!}{\prod_{i=1}^L m_i!}$.
\begin{lemma}\label{lem:pftimesdev}
Fix $n \in \Zn$ and $t > 0$, there exists a constant $C = C(n, t)$ such that for all $L \in \Zp$ and $s \in [0, 1]$,  
\begin{equation*}
\bigg|\int_{\RL} \emph{Pf}\big[K(x_i, x_j)\big]_{i, j = 1}^L \partial_s^n \Big(\prod_{i=1}^L \phi_{s, t}(x_i)\Big) dx_1 \dots dx_L\bigg| \leq (2L)^{\frac{L}{2}} C^L.
\end{equation*}
\end{lemma}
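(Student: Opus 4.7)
The plan is to combine Proposition \ref{prop:pfbound} \ref{item:pfbound1} with the Leibniz rule and the pointwise bounds of Lemma \ref{lem:phibound}. The choice of the first Pfaffian bound (rather than \ref{item:pfbound2}, which carries the heavier $\sqrt{(2L)!}$ factor) is forced by the target growth $(2L)^{L/2}$ appearing in the statement.

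First I would expand, by Leibniz,
\[
\partial_s^n \prod_{i=1}^L \phi_{s,t}(x_i) = \sum_{\vec m \in \M(L,n)} \binom{n}{\vec m} \prod_{i=1}^L \phi_{s,t}^{(m_i)}(x_i),
\]
bound the Pfaffian pointwise by $(2L)^{L/2} C^L \prod_{i=1}^L \ft(x_i)$, take absolute values, and use Fubini to reduce the $L$-fold integral on each Leibniz summand to a product of single integrals
\[
I_m(s) := \int_{\R} \ft(x)\, \bigl|\phi_{s,t}^{(m)}(x)\bigr|\, dx, \qquad 0 \leq m \leq n.
\]
The key step is then to show $\sup_{s \in [0,1]} I_m(s) \leq C'(m, t) < \infty$ for each such $m$. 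For $m \geq 1$, Lemma \ref{lem:phibound} gives $|\phi_{s,t}^{(m)}(x)| \leq C\, e^{m t^{1/3} x}$, and the product with $\ft(x)$ decays as $\exp(-x^{3/2}/3 + m t^{1/3} x)$ at $+\infty$ and as $(1-x)^2 e^{m t^{1/3} x}$ at $-\infty$, both integrable. For $m = 0$, split $\R$ at the origin: on $[0, \infty)$ use $|\phi_{s,t}| \leq 1$ against $\ft(x) = e^{-x^{3/2}/3}$; on $(-\infty, 0)$ use $|\phi_{s,t}(x)| \leq 2 s e^{t^{1/3} x} \leq 2 e^{t^{1/3} x}$ against $\ft(x) = (1-x)^2$. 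Both pieces are integrable and the bound is uniform in $s \in [0,1]$.

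Setting $C_0 = \max_{0 \leq m \leq n} C'(m, t)$, the single-integral product is at most $C_0^L$. Since $|\M(L,n)| = \binom{L+n-1}{n}$ grows polynomially in $L$ for fixed $n$ and each multinomial coefficient is at most $n!$, the full Leibniz sum contributes a factor polynomial in $L$ with exponent depending only on $n$, which is easily absorbed into $\tilde C^L$ for a large enough $\tilde C = \tilde C(n)$. This yields the asserted bound with $C = C(n, t)$.

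The only mild obstacle is that $\ft$ alone is not integrable on $(-\infty, 0]$, so one cannot crudely bound each $\phi$-factor by a constant when $m_i = 0$. The remedy is exactly the second clause of Lemma \ref{lem:phibound}'s $k = 0$ bound, namely $|\phi_{s,t}(x)| \leq 2 s e^{t^{1/3} x}$, whose exponential decay at $-\infty$ restores integrability against the $(1-x)^2$ tail of $\ft$ uniformly for $s \in [0, 1]$.
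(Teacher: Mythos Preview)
Your proposal is correct and follows essentially the same approach as the paper: Leibniz expansion, the Pfaffian bound of Proposition \ref{prop:pfbound} \ref{item:pfbound1}, and the pointwise estimates of Lemma \ref{lem:phibound} (including the $2s e^{t^{1/3}x}$ bound for $m=0$ to handle the polynomial tail of $\ft$ at $-\infty$), followed by absorbing the polynomial-in-$L$ Leibniz-sum count into $C^L$. The only cosmetic difference is that the paper packages the $\phi$-bounds into a single uniform majorant $|\phi_{s,t}^{(m)}(x)| \leq C_1\bigl(e^{t^{1/3}x} \vee e^{nt^{1/3}x}\bigr)$ valid for all $0 \leq m \leq n$, whereas you treat $m=0$ and $m \geq 1$ separately and take the maximum of the resulting single integrals.
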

\begin{proof}
Throughout the proof, we write $C = C(n, t)$. By Leibniz's rule, 
\begin{equation}\label{eq:leibniz}
\partial_s^n \Big(\prod_{i=1}^L \phi_{s,t}(x_i)\Big) = \sum_{\vec{m} \in \M(L, n)} \binom{n}{\vec{m}} \prod_{i=1}^L \phi^{(m_i)}_{s, t} (x_i).
\end{equation}
According to Lemma \ref{lem:phibound}, there exists constant $C_1$ such that for each $0 \leq m_i \leq n$ and $s \in [0, 1]$
\begin{equation*}
|\phi_{s, t}^{(m_i)} (x_i)| \leq 2s e^{t^{\frac{1}{3}} x} \leq 2 e^{t^{\frac{1}{3}} x} \quad \text{ if } m_i = 0; \qquad |\phi_{s, t}^{(m_i)} (x_i)| \leq C_1 e^{m_i t^{\frac{1}{3}} x}\quad  \text{ if } 1 \leq m_i \leq n.
\end{equation*}
Hence, we have $|\phi_{s, t}^{(m_i)} (x_i)| \leq C_1 \big(\exp(t^{\frac{1}{3}} x) \vee \exp(t^{\frac{1}{3}} n x)\big)$. Taking the absolute value for both sides of \eqref{eq:leibniz}  and applying triangle inequality,
\begin{equation}
\label{eq:temp1}
\Big|\partial_s^n \Big(\prod_{i=1}^L \phi_{s,T}(x_i)\Big)\Big| \leq C_1^L \sum_{\vec{m} \in\M(L, n)}  \binom{n}{\vec{m}}  \prod_{i=1}^L \Big( e^{t^{\frac{1}{3}} x_i} \vee e^{n t^{\frac{1}{3}} x_i}\Big) = C_1^L \prod_{i=1}^L \Big( e^{t^{\frac{1}{3}} x_i} \vee e^{n t^{\frac{1}{3}} x_i}\Big) \bigg(\sum_{\vec{m} \in\M(L, n)}  \binom{n}{\vec{m}}\bigg)
\end{equation}
It suffices to show that there exists a constant $C_2$ such that for all $L \in \Zp$ \begin{equation}\label{eq:temp2}
\sum_{\vec{m} \in \M(L,n)} \binom{n}{\vec{m}} \leq C_2^L.
\end{equation}
Once this shown, by \eqref{eq:temp1} and triangle inequality, we see that $\big|\partial_s^n \big(\prod_{i=1}^L \phi_{s,T}(x_i)\big)\big|$ is upper bounded by $(C_1 C_2)^L \prod_{i=1}^L \big( e^{t^{\frac{1}{3}} x_i} \vee e^{n t^{\frac{1}{3}} x_i}\big)$. Applying Proposition \ref{prop:pfbound} yields 
\begin{equation*}
\bigg|\int_{\RL} \text{Pf}\big[K(x_i, x_j)\big]_{i, j = 1}^L \partial_s^n \Big(\prod_{i=1}^L \phi_{s, t}(x_i)\Big) dx\bigg| \leq (2L)^{\frac{L}{2}} C^L \Big(\int_\R \f_{\frac{1}{3}, 2} (x) \big(e^{t^{\frac{1}{3}} x} \vee e^{n t^{\frac{1}{3}} x}\big) dx\Big)^L.
\end{equation*}
Since $t$ is fixed, the integrand on the right hand side above  decays super-exponentially at $+\infty$ and exponentially at $-\infty$, hence is integrable. The value of the integration is a constant that only depends on $n, t$. This completes our proof of the lemma.
\smallskip
\\
It remains to prove \eqref{eq:temp2}.
Let $\# A$ be the number of elements in $A$. Note that  
\begin{equation}\label{eq:MLn}
\#\M(L, n) = \#\{\vec{m} = (m_1, \dots, m_L) \in \Zn^L, \sum_{i=1}^L m_i = n\}  \leq L \#\M(L, n-1).
\end{equation}
Iterating this inequality yields $\#\M(L, n) \leq L^n$. In addition, for each $\vec{m} \in \M(L,n)$, $\binom{n}{\vec{m}}$ is upper bounded by $n!$. We can find a large constant 
$C_2 = C_2 (n)$ such that for all $L \geq 1$,  
\begin{equation*}
\sum_{\vec{m} \in\M(L, n)}  \binom{n}{\vec{m}} \leq n! \# \M(L, n) \leq n! L^n \leq C_2^L. \qedhere
\end{equation*}
\end{proof}
 The next two propositions validate that we can interchange the order of derivative, summation and integral.
\begin{prop}\label{lem:derivativeintegral}
For every fixed $n, L \in \Z_{\geq 1}$, $s \in [0, 1]$ and $t > 0$, we have 
\begin{align*}
\partial_s^n \int_{\RL} \emph{Pf}\big[K(x_i, x_j)\big]_{i, j=1}^L  \prod_{i = 1}^{L} \phi_{s, t}(x_i) dx_i
= \int_{\RL} \emph{Pf}\big[K(x_i, x_j)\big]_{i, j = 1}^L \partial_s^n \Big(\prod_{i=1}^L \phi_{s, t}(x_i)\Big) dx_1 \dots dx_L
\end{align*}
\end{prop}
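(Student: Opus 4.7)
The plan is to prove Proposition \ref{lem:derivativeintegral} by induction on $n$, using at each step the classical theorem on differentiation under the integral sign. The key to applying that theorem is a uniform-in-$s$ integrable upper bound on the partial derivatives of the integrand, and such a bound is already essentially assembled from Lemma \ref{lem:phibound} and Proposition \ref{prop:pfbound}.

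First, the base case $n=0$ is trivial. For the inductive step, suppose the identity holds with $n$ replaced by $n-1$. Then differentiating once more in $s$ reduces matters to showing that
\[
\partial_s \int_{\RL} \text{Pf}\big[K(x_i, x_j)\big]_{i, j=1}^L \partial_s^{n-1}\Big(\prod_{i=1}^L \phi_{s, t}(x_i)\Big) dx_1 \dots dx_L = \int_{\RL} \text{Pf}\big[K(x_i, x_j)\big]_{i, j=1}^L \partial_s^{n}\Big(\prod_{i=1}^L \phi_{s, t}(x_i)\Big) dx_1 \dots dx_L.
\]
By the standard dominated-convergence form of Leibniz's rule, it suffices to exhibit a nonnegative function $g(x_1,\ldots,x_L) \in L^1(\RL)$ and a neighborhood $[s-\epsilon, s+\epsilon] \subset [0,1]$ such that for every $s' \in [s-\epsilon, s+\epsilon]$,
\[
\Big|\text{Pf}\big[K(x_i, x_j)\big]_{i, j=1}^L \partial_{s'}^{n}\Big(\prod_{i=1}^L \phi_{s', t}(x_i)\Big)\Big| \leq g(x_1,\ldots,x_L).
\]

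To produce such a $g$, expand the derivative by Leibniz's rule exactly as in \eqref{eq:leibniz}, write
\[
\partial_s^n \Big(\prod_{i=1}^L \phi_{s,t}(x_i)\Big) = \sum_{\vec{m} \in \M(L, n)} \binom{n}{\vec{m}} \prod_{i=1}^L \phi^{(m_i)}_{s, t} (x_i),
\]
and bound each factor using Lemma \ref{lem:phibound}: for $m_i = 0$ we have $|\phi_{s,t}(x_i)| \leq 1$, and for $m_i \geq 1$ we have $|\phi^{(m_i)}_{s,t}(x_i)| \leq C e^{m_i t^{1/3} x_i} \leq C e^{n t^{1/3} x_i}$, with $C$ depending only on $n$ and independent of $s$. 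Combining with the combinatorial estimate \eqref{eq:temp2} for $\sum_{\vec{m}}\binom{n}{\vec{m}}$ from the proof of Lemma \ref{lem:pftimesdev}, we obtain a bound of the form $C_1^L \prod_{i=1}^L (1 \vee e^{nt^{1/3} x_i})$ that is uniform in $s$. Multiplying by Proposition \ref{prop:pfbound} \ref{item:pfbound1}, the candidate dominator
\[
g(x_1,\ldots,x_L) = (2L)^{L/2} C_2^L \prod_{i=1}^L \f_{\frac{1}{3},2}(x_i)\big(1 \vee e^{nt^{1/3} x_i}\big)
\]
is integrable (super-exponential decay at $+\infty$ from $\f_{\frac{1}{3},2}$, exponential decay against polynomial growth at $-\infty$), which is exactly the integrability verified at the end of the proof of Lemma \ref{lem:pftimesdev}. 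This dominator depends on $L,n,t$ but is independent of $s \in [0,1]$, justifying the interchange.

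There is no real obstacle here — the statement is essentially a repackaging of the estimates already behind Lemma \ref{lem:pftimesdev}. The only point that needs a little care is that the ``$s^{-k}$" alternative in Lemma \ref{lem:phibound} is not useful near $s=0$, so one must consistently use the ``$e^{k t^{1/3} x}$" alternative in order to get a bound that is uniform on all of $[0,1]$; this is straightforward and has already been invoked in exactly this way in the proof of Lemma \ref{lem:pftimesdev}. Iterating the interchange $n$ times (alternatively, applying it once with $n$ replaced by any $k \leq n$) yields the identity in the proposition.
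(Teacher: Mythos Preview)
Your overall strategy is the same as the paper's: produce an integrable, $s$-uniform dominator for $\text{Pf}[K(x_i,x_j)]_{i,j=1}^L\,\partial_s^n\prod_i\phi_{s,t}(x_i)$ and invoke dominated convergence. However, the specific dominator you write down is \emph{not} integrable, so the argument as written has a gap.

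The issue is your choice to bound $|\phi_{s,t}(x_i)|\le 1$ when $m_i=0$. This gives $\prod_i(1\vee e^{nt^{1/3}x_i})$, and for $x_i<0$ that factor equals $1$. Multiplying by $\f_{\frac13,2}(x_i)=(1-x_i)^2$ on the negative axis leaves $(1-x_i)^2$, which is not integrable on $(-\infty,0)$. So your claimed ``exponential decay against polynomial growth at $-\infty$'' does not hold for the $g$ you wrote. (You point to the integrability at the end of Lemma~\ref{lem:pftimesdev}, but that argument uses $e^{t^{1/3}x}\vee e^{nt^{1/3}x}$, not $1\vee e^{nt^{1/3}x}$; the former decays on the negative axis, the latter does not.)

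The fix is immediate and is exactly what the paper does: use the other half of Lemma~\ref{lem:phibound} for $m_i=0$, namely $|\phi_{s,t}(x_i)|\le 2s\,e^{t^{1/3}x_i}\le 2\,e^{t^{1/3}x_i}$ for $s\in[0,1]$. Then the per-variable factor becomes $e^{t^{1/3}x_i}\vee e^{nt^{1/3}x_i}$, and
\[
G(x_1,\dots,x_L)=(2L)^{L/2}C^L\prod_{i=1}^L \f_{\frac13,2}(x_i)\bigl(e^{t^{1/3}x_i}\vee e^{nt^{1/3}x_i}\bigr)
\]
is integrable (super-exponential decay at $+\infty$, exponential decay against $(1-x_i)^2$ at $-\infty$). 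With this correction your induction/DCT argument goes through and matches the paper's proof.
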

\begin{proof}
The proof follows from the dominated convergence theorem. It suffices to show that we can find an integrable function $G(x_1, $\dots$, x_L)$ such that for all $s \in [0, 1]$,  
\begin{equation}\label{eq:Gdominate}
\Big|\text{Pf}\big[K(x_i, x_j)\big]_{i, j =1}^L \partial_s^n \Big(\prod_{i=1}^L \phi_{s, t}(x_i)\Big)\Big| \leq G(x_1, \dots, x_L)
\end{equation}
By the argument in the proof of Lemma \ref{lem:pftimesdev}, we can take $$G(x_1, \dots, x_L) = (2L)^{\frac{L}{2}} C^L \prod_{i=1}^L \f_{\frac{1}{3}, 2} (x_i) \big(e^{t^{\frac{1}{3}} x_i} \vee e^{n t^{\frac{1}{3}} x_i}\big)$$
where $C$ is the constant in Lemma \ref{lem:pftimesdev}. Since $G$ is integrable and satisfy \eqref{eq:Gdominate}, this completes our proof. 
\end{proof}
\begin{prop}\label{lem:summationintegral}
For fixed $n \in \Zn, t > 0$ and $s \in [0, 1]$ we have the following interchange of differentiation and summation holds
\begin{align*}
&\partial_s^n \Big(\sum_{L=1}^{\infty} \frac{1}{L!} \int_{\RL} \emph{Pf} \big[K(x_i, x_j)\big]_{i, j=1}^L \prod_{i=1}^L \phi_{s, t}(x_i) dx_i\Big) = \sum_{L=1}^{\infty} \partial_s^n \Big(\int_{\RL} \emph{Pf}\big[K(x_i, x_j)\big]_{i, j =1}^L  \prod_{i=1}^L \phi_{s, t}(x_i) dx_i\Big).
\end{align*}
\end{prop}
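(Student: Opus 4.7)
The plan is to reduce the problem to a standard real-analysis fact: if a series of $C^n$ functions converges pointwise and all the series of derivatives (up to order $n$) converge uniformly on a compact interval, then differentiation commutes with the infinite sum. Because $s$ ranges over the compact set $[0,1]$ and $n$ is fixed, it suffices to verify uniform convergence of
\[
\sum_{L=1}^\infty \frac{1}{L!}\,\partial_s^k \int_{\RL} \emph{Pf}\big[K(x_i,x_j)\big]_{i,j=1}^L \prod_{i=1}^L \phi_{s,t}(x_i)\,dx_1\cdots dx_L
\]
on $s\in[0,1]$ for each $k=0,1,\ldots,n$, and then iterate the commutation lemma $n$ times.

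First I would use Proposition \ref{lem:derivativeintegral} to pull the derivative $\partial_s^k$ inside each $L$-fold integral, which is legitimate for every fixed $L$. Next, I apply Lemma \ref{lem:pftimesdev} with $n$ replaced by $k$ (the estimate is uniform in $s\in[0,1]$, as that is precisely the range in which Lemma \ref{lem:pftimesdev} is stated), obtaining a constant $C=C(n,t)$ (taking the maximum of the constants arising for $k=0,\ldots,n$) such that
\[
\sup_{s\in[0,1]} \bigg|\,\partial_s^k\!\!\int_{\RL}\emph{Pf}\big[K(x_i,x_j)\big]_{i,j=1}^L \prod_{i=1}^L \phi_{s,t}(x_i)\,dx_1\cdots dx_L\bigg| \le (2L)^{L/2} C^L
\]
for every $L\in\Zp$ and every $k\in\{0,1,\dots,n\}$.

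The remaining task is therefore to show that $\sum_{L\ge 1} (2L)^{L/2} C^L / L!$ converges. This is a pure Stirling computation: by $L!\ge L^L e^{-L}$ one has
\[
\frac{(2L)^{L/2} C^L}{L!} \le \frac{(2L)^{L/2} C^L e^L}{L^L} = \frac{(C\sqrt{2}\,e)^L}{L^{L/2}},
\]
and the right-hand side decays super-exponentially in $L$, so the series converges (in fact absolutely). Hence, for each $k\in\{0,\dots,n\}$, the series $\sum_L \frac{1}{L!}\partial_s^k(\cdots)$ is dominated on $[0,1]$ by the convergent numerical series just above, yielding uniform (and absolute) convergence by the Weierstrass $M$-test.

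With uniform convergence of the $k$-th derivative series for $k=0,1,\ldots,n$ in hand, one classical theorem (applied inductively $n$ times: uniform convergence of $\sum f_L'$ plus pointwise convergence of $\sum f_L$ implies $(\sum f_L)' = \sum f_L'$) gives the asserted interchange. I do not anticipate a real obstacle here, as the heavy lifting—the Pfaffian bound (Proposition \ref{prop:pfbound}) and the combinatorial/$\phi$-derivative estimate (Lemma \ref{lem:pftimesdev})—is already available; the only point that requires any attention is making sure the constant in Lemma \ref{lem:pftimesdev} can be chosen uniformly in $k\le n$, which is immediate by taking the maximum over finitely many $k$.
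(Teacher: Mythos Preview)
Your proof is correct and follows essentially the same approach as the paper: both use Proposition \ref{lem:derivativeintegral} to move $\partial_s^k$ inside the integral, then invoke Lemma \ref{lem:pftimesdev} together with the convergence of $\sum_{L\ge 1} (2L)^{L/2}C^L/L!$ to get uniform-in-$s$ bounds and apply the standard interchange criterion. The paper states the criterion by citing \cite[Proposition 4.2]{DT19}, whereas you spell out the Weierstrass $M$-test and the inductive application of the classical differentiation theorem, but the substance is identical.
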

\begin{proof}
A sufficient condition for the interchange of the order of derivative and infinite summation is that (see \cite[Proposition 4.2]{DT19}), 
\begin{enumerate}[leftmargin=20pt, label=(\roman*)]
\item \label{item:sumconverge} $\sum_{L=1}^{\infty} \frac{1}{L!} \int_{\RL} \text{Pf} \big[K(x_i, x_j)\big]_{i, j=1}^L \prod_{i=1}^L \phi_{s, t}(x_i) dx_i$ converges pointwisely for $s \in [0, 1]$.
\item \label{item:dsumconverge} For all $n \in \Zp$, $\sum_{L=1}^{\infty} \frac{1}{L!} \partial_s^n \big(\int_{\RL} \text{Pf} \big[K(x_i, x_j)\big]_{i, j=1}^L \prod_{i=1}^L \phi_{s, t}(x_i) dx_i\big)$ converges uniformly for $s \in [0, 1]$.
\end{enumerate}
Applying Proposition \ref{lem:derivativeintegral}, for (ii), we can place $\partial_s^n$ inside the integral. Both \ref{item:sumconverge} and \ref{item:dsumconverge} then follow from Lemma \ref{lem:pftimesdev} and the convergence of  $\sum_{L=1}^\infty \frac{C^L (2L)^{\frac{L}{2}}}{L!}$.
\end{proof}
\begin{proof}[Proof of Lemma \ref{lem:intdevinterchange}]
It is enough to show \eqref{eq:laplace1}. Combining Proposition \ref{lem:derivativeintegral} and \ref{lem:summationintegral}, we know that 
\begin{align*}
&\int_0^{1} s^{-\alpha} \partial_s^n \Big(\sum_{L = 1}^\infty \int_{\RL} \pfll  \prod_{i = 1}^L  \phi_{s, t}(x_i) dx_i\Big) ds\\ 
&= \int_0^{1} s^{-\alpha}  \bigg(\sum_{L = 1}^\infty \int_{\RL} \pfll \partial_s^n \Big(\prod_{i = 1}^L  \phi_{s, t}(x_i)\Big) dx_1 \dots dx_L\bigg) ds
\end{align*}
Thus, what remains to prove is that 
\begin{align*}
&\int_0^{1} s^{-\alpha}  \bigg(\sum_{L = 1}^\infty \int_{\RL} \pfll \partial_s^n \Big(\prod_{i = 1}^L  \phi_{s, t}(x_i)\Big) dx_1 \dots dx_L\bigg) ds \\
&=\sum_{L = 1}^\infty\int_0^{1} s^{-\alpha}  \bigg( \int_{\RL} \pfll \partial_s^n \Big(\prod_{i = 1}^L  \phi_{s, t}(x_i)\Big) dx_1 \dots dx_L\bigg) ds
\end{align*}
This is justified via the dominated convergence theorem, using Lemma \ref{lem:pftimesdev} and the convergence of  $\sum_{L=1}^\infty \frac{C^L (2L)^{\frac{L}{2}}}{L!}$.
\end{proof}

\section{Proof of Proposition \ref{prop:minor}}
\label{sec:Bp}
 In this section, we prove Proposition \ref{prop:minor}.
The main inputs will be  Proposition \ref{prop:pfbound} and \ref{prop:integrationbound}.
\smallskip
\\
Define $\VV_n(x) = nx - \frac{1}{3} x^{\frac{3}{2}}$ and
$\UU_n (x) = n x - \frac{2}{3} x^{3/2}$. By straightforward calculus, $\VV_n (\sigma \wedge 4n^2)$ (resp. $\UU_n (\sigma \wedge n^2)$) is the maximum of $\VV_n$ (resp. $\UU_n$) over $x \in [0, \sigma]$. Recall from the beginning of Section \ref{sec:prepare} that $\f_{\alpha, \beta}(x) =  e^{-\alpha x^{\frac{3}{2}}} \mathbf{1}_{\{x \geq 0\}} + (1-x)^\beta \mathbf{1}_{\{x < 0\}}$. We start with the following lemma.
\begin{lemma}\label{lem:VnUnlaplace}
Fix $n \in \Zp$ and $t_0 > 0$, there exists $C = C(n, t_0)$ such that for all $\sigma \geq 0$ and $t > t_0$,
\begin{enumerate}[leftmargin = 20pt, label = (\roman*)]
\item \label{item:Vnlaplace} $\int_{-\infty}^{\sigma} \exp(tnx) \f_{\frac{1}{3}, 2} (t^{\frac{2}{3}} x)dx \leq C t^{-\frac{1}{2}} \exp(t V_n(\sigma \wedge 4n^2))$
\item \label{item:Unlaplace} $\int_{-\infty}^{\sigma} \exp(tnx)  \f_{\frac{2}{3}, 2} (t^{\frac{2}{3}} x) \leq C t^{-\frac{1}{2}} \exp(t U_n (\sigma \wedge n^2))$
\end{enumerate}
\end{lemma}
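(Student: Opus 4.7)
The two parts have identical structure, differing only by the prefactor of $x^{3/2}$ and the location of the maximizer: let $(\alpha, x^*)$ equal $(\tfrac{1}{3}, 4n^2)$ for part \ref{item:Vnlaplace} and $(\tfrac{2}{3}, n^2)$ for part \ref{item:Unlaplace}, and set $W_n(x) = nx - \alpha x^{3/2}$ so that $W_n$ plays the role of $V_n$ or $U_n$. On $[0,\infty)$, $W_n$ is concave with a unique interior maximum at $x^*$, where $W_n'(x^*) = 0$ and $W_n''(x^*) < 0$. Observing that $W_n(\sigma \wedge x^*) \geq W_n(0) = 0$, so $e^{tW_n(\sigma \wedge x^*)} \geq 1$, I would split the integral at $x = 0$ and treat the two halves separately.

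\textbf{Negative half.} For $x < 0$ one has $\f_{\alpha, 2}(t^{2/3} x) = (1 - t^{2/3} x)^2$, and the substitution $u = -tnx$ converts the integral into $\tfrac{1}{tn}\int_0^{\infty} e^{-u}(1 + u/(n t^{1/3}))^2\,du$, which is bounded by a constant $C(n, t_0) t^{-1}$ for $t > t_0$. Since $e^{tW_n(\sigma \wedge x^*)} \geq 1$, this is absorbed into the target bound $C t^{-1/2} e^{t W_n(\sigma \wedge x^*)}$.

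\textbf{Positive half.} For $x \geq 0$, the integrand collapses to $e^{tW_n(x)}$, and the task reduces to showing $\int_0^{\sigma} e^{tW_n(x)}\,dx \leq C t^{-1/2} e^{tW_n(\sigma \wedge x^*)}$. When $\sigma \geq x^*$, I would enlarge the domain to $[0,\infty)$ and execute a standard Laplace analysis near the interior maximum $x^*$: on a fixed neighborhood of $x^*$, the quadratic expansion $W_n(x) \leq W_n(x^*) + \tfrac{1}{2} W_n''(x^*)(x - x^*)^2 + O(|x-x^*|^3)$ yields a Gaussian integral of width $t^{-1/2}$ and value $C(n) t^{-1/2} e^{tW_n(x^*)}$; outside that neighborhood, concavity plus the $-\tfrac{1}{3}x^{3/2}$ tail gives $W_n(x) \leq W_n(x^*) - c(n)$ for some positive $c(n)$, so this part is exponentially smaller. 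When $\sigma < x^*$, concavity of $W_n$ yields $W_n(x) \leq W_n(\sigma) - (\sigma - x) W_n'(\sigma)$, hence $\int_0^\sigma e^{tW_n(x)}\,dx \leq e^{tW_n(\sigma)}/(tW_n'(\sigma))$, which delivers the claim whenever $W_n'(\sigma) \geq t^{-1/2}$.

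The one delicate regime is $\sigma < x^*$ with $W_n'(\sigma) < t^{-1/2}$, where the boundary concavity estimate degenerates. Here a direct computation of $W_n'$ shows that $\sigma$ must lie within distance $O(t^{-1/2})$ of $x^*$, and a second-order Taylor expansion at $x^*$ (using $W_n'(x^*) = 0$ and $W_n''(x^*) < 0$) gives $W_n(x^*) - W_n(\sigma) = O(t^{-1})$. Thus $e^{tW_n(x^*)} \leq C e^{tW_n(\sigma)}$, and one passes to the interior Laplace bound on the enlarged interval $[0, x^*]$ at a bounded multiplicative cost. This gluing between the boundary estimate (via concavity) and the interior estimate (via quadratic Laplace) in the transition regime is the main technical point; once it is handled, the uniform-in-$\sigma$ constant $C(n, t_0)$ follows from combining the two regimes.
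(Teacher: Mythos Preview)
Your proof is correct and reaches the same conclusion as the paper, but by a somewhat different route on the positive half. The paper also splits at $x=0$ and handles the negative half identically to you. For $\int_0^\sigma e^{tW_n(x)}\,dx$, however, the paper makes the substitution $x = r^2$, which turns $W_n(x)$ into the cubic polynomial $nr^2 - \alpha r^3$; exact algebraic factorizations such as $V_n(r^2) - \tfrac{4}{3}n^3 = -\tfrac{1}{3}(r-2n)^2(r+n)$ then give a uniform quadratic upper bound of the form $W_n(r^2) \leq W_n(\sigma \wedge x^*) - c(n)\bigl(r-\sqrt{\sigma \wedge x^*}\bigr)^2$, yielding a single Gaussian integral with no $t$-dependent case split. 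The paper's three cases are $\sigma \leq n^2$ (pure concavity, exactly your tangent-line bound with $W_n'(\sigma) \geq n/2$), $n^2 \leq \sigma \leq 4n^2$, and $\sigma > 4n^2$ (both handled via the substitution). Your argument replaces the substitution by an abstract Laplace/concavity scheme, splitting instead on whether $W_n'(\sigma)$ exceeds $t^{-1/2}$ and patching the transition regime via a second-order Taylor expansion at $x^*$. This is equally valid and more robust---it would apply to any smooth strictly concave $W$ with a nondegenerate interior maximum---whereas the paper's substitution exploits the specific form $nx - \alpha x^{3/2}$ to sidestep the gluing step entirely.
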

\begin{proof}
Let us first demonstrate \ref{item:Vnlaplace}. 
Decompose 
\begin{equation}\label{eq:laplacedecompose}
\int_{-\infty}^\sigma \exp(tnx) \f_{\frac{1}{3}, 2} (t^{\frac{2}{3}} x) dx = \int_{-\infty}^0 \exp(tnx) \f_{\frac{1}{3}, 2} (t^{\frac{2}{3}} x) dx + \int_{0}^\sigma \exp(tnx) \f_{\frac{1}{3}, 2} (t^{\frac{2}{3}} x) dx
\end{equation}
Since $F_{\alpha, \beta}(x)$ equals $(1-x)^{\beta}$ when $x$ is negative, we can rewrite the first term on the right hand side in the above display into $\int_{-\infty}^0 \exp(tn x) (1-t^{\frac{2}{3}} x)^2 dx$. We have 
$$\int_{-\infty}^0 e^{tnx} (1-t^{\frac{2}{3}} x)^2 dx = t^{-1} \int_{0}^{\infty} e^{-nx} (1+t^{-\frac{1}{3}} x)^2 dx \leq t^{-1} \int_0^{\infty} e^{-nx} (1 +t_0^{-1/3} x)^2 dx \leq Ct^{-1},$$ 
where the first equality is due to a change of variable $x \to -t^{-1} x$, the second equality follows our condition $t \geq t_0$ and the third is due to $\int_0^{\infty} e^{-nx} (1 + t_0^{-\frac{1}{3}x})^2 dx$ is finite constant only depending on $n, t_0$.  
We have shown that the first term on the right hand side of \eqref{eq:laplacedecompose} is upper bounded by $C t^{-1}$ for some $C = C(n, t_0)$. 
Since $V_n (\sigma \wedge 4n^2)$ is non-negative, which implies that $t^{-1} \leq C t^{-\frac{1}{2}}  \exp(t \VV_n (\sigma \wedge 4n^2))$ for $t \geq t_0$. To prove \ref{item:Vnlaplace},  it suffices to prove the second term on the right hand side of \eqref{eq:laplacedecompose} is also upper bounded by $C t^{-\frac{1}{2}}  \exp(t \VV_n (\sigma \wedge 4n^2))$. Note that when $x \geq 0$, $F_{\frac{1}{3}, 2} (t^{\frac{2}{3}} x) = \exp(-\frac{1}{3}t x^{\frac{3}{2}} )$ which yields $\int_0^{\sigma} \exp(tnx) \f_{\frac{1}{3}, 2} (t^{\frac{2}{3}} x) dx = \int_0^{\sigma} \exp(t V_n(x)) dx$. Thus we only need to show that there exists a constant $C = C(n, t_0)$ such that for $t > t_0$
\begin{equation}\label{eq:needtoshow}
\int_0^{\sigma} e^{t \VV_n (x)} dx \leq C t^{-\frac{1}{2} } e^{t V_n(\sigma \wedge 4n^2)}.
\end{equation}
To this aim, we split our discussion into the following three cases.
\smallskip
\\
\textbf{Case 1.} $\sigma \in [0, n^2]$. Since $V_n(x)$ is concave on $\sigma \in [0, n^2]$. Hence, for all $x \in [0, \sigma]$,
$$\VV_n (x) \leq \VV_n(\sigma) + \VV_n'(\sigma)(x- \sigma) = \VV_n (\sigma) + (n - \frac{1}{2} \sqrt{\sigma}) (x-\sigma) \leq \VV_n(\sigma) + \frac{n}{2}(x-\sigma).$$
The last inequality above follows since $x \leq \sigma$ and $n - \frac{1}{2}\sqrt{\sigma} \geq n/2$. Using the displayed inequality above,
\begin{equation*}
\int_0^{\sigma} e^{t \VV_n (x)} dx \leq \int_0^\sigma e^{t (\VV_n (\sigma) + \frac{n}{2} (x-\sigma))} dx = e^{t \VV_n (\sigma)} \int_0^\sigma e^{\frac{nt}{2}(x-\sigma)} dx \leq \frac{2}{nt}e^{t \VV_n (\sigma)}.
\end{equation*}
which implies \eqref{eq:needtoshow}.
\smallskip
\\
\textbf{Case 2.} $\sigma \in [n^2, 4n^2]$. First via a change of variable $x = r^2$, 
$\int_0^{\sigma} e^{t V_n (x)} dx = \int_0^{\sqrt{\sigma}}  2r e^{t V_n (r^2)} dr$.  Therefore, we only need to prove \eqref{eq:needtoshow} with $\int_0^{\sqrt{\sigma}}  2r e^{t V_n (r^2)} dr$ in place of $\int_0^{\sigma} e^{t V_n (x)} dx$.
Since $r \leq \sqrt{\sigma} \leq 2n$,  
\begin{equation*}
V_n (r^2) - \VV_n (\sigma) = n(r^2 - \sigma) - \frac{1}{3} (r^{3} - \sigma^{\frac{3}{2}}) \leq \frac{\sqrt{\sigma}}{2} (r^2-\sigma) - \frac{1}{3} (r^3 - \sigma^{\frac{3}{2}}) = - \frac{1}{3} (r - \sqrt{\sigma})^2 (r + \frac{1}{2} \sqrt{\sigma}).
\end{equation*}
Since $\sigma \geq n^2$ and $r \geq 0$, from the above displayed inequality, $\VV_n (r^2) \leq \VV_n(\sigma) - \frac{n}{6} (r- \sqrt{\sigma})^2  $.
Consequently, 
\begin{equation*}
\int_0^{\sqrt{\sigma}} 2r e^{tV_n (r^2)} dr \leq \int_0^{\sqrt{\sigma}} 2r e^{t \VV_n (\sigma)} e^{-\frac{tn}{6} (r-\sqrt{\sigma})^2} dr  = e^{t V_n (\sigma)} \int_0^{\sqrt{\sigma}} 2r e^{-\frac{t}{6} (r-\sqrt{\sigma})^2} dr \leq \frac{C}{\sqrt{t}} e^{t \VV_n(\sigma)}.
\end{equation*}
This completes the proof of \eqref{eq:needtoshow}. In the last inequality above, we used $\sqrt{\sigma} \leq 2n$ and $t \geq t_0$, which implies that  $$\int_0^{\sqrt{\sigma}} 2r e^{-\frac{t}{6} (r-\sqrt{\sigma})^2} dr \leq \int_{-\infty}^{\infty} 2(r+\sqrt{\sigma}) e^{-\frac{t}{6} r^2} dr \leq \int_{-\infty}^{\infty} 2(r +2n) e^{-\frac{t}{6}r^2} dr = C_1 t^{-1} + C_2 t^{-1/2} \leq Ct^{-1/2}.$$
\smallskip
\\
\textbf{Case 3.} $\sigma > 4n^2$. As illustrated in \textbf{Case 2}, we only need to show that for $t > t_0$, $$\int_0^{\sqrt{\sigma}} 2r e^{t\VV_n (r^2)} dr \leq C t^{-\frac{1}{2} } e^{t V_n(\sigma \wedge 4n^2)}$$ 
Note that 
$\VV_n (r^2) - \frac{4}{3}n^3 = nr^2 - \frac{1}{3} r^3 - \frac{4}{3}n^3= - \frac{1}{3} (r - 2n)^2 (n+r) \leq -\frac{n}{3} (r-2n)^2$. This implies 
\begin{equation*}
\int_0^{\sqrt{\sigma}} 2 r e^{t\VV_n (r^2)} dr \leq \int_0^{\sqrt{\sigma}} 2r e^{\frac{4}{3}t n^3}  e^{-\frac{n}{3} t(r-2n)^2} dr \leq  e^{\frac{4}{3} n^3 t} \int_0^{\sqrt{\sigma}} 2r e^{-\frac{n}{3} t (r-2n)^2} dr \leq \frac{C}{\sqrt{t}} e^{\frac{4}{3} n^3 t}.
\end{equation*}
The last inequality is due to a similar argument as in \textbf{Case 2} above. Since $\sigma > 4n^2$, we have $\VV_n (\sigma \wedge 4n^2) = \VV_n (4n^2) = \frac{4}{3} n^3$, thus we showed \eqref{eq:needtoshow}. So far, we complete the proof of \ref{item:Vnlaplace}.
\smallskip
\\ 
The proof of \ref{item:Unlaplace} will be rather similar, instead of \eqref{eq:needtoshow}, the key step is to show that there exists $C = C(n, t_0)$ such that for $t \geq t_0$, $\int_0^\sigma \exp\big(t U_n(x)\big) dx \leq C t^{-\frac{1}{2}} \exp\big(t U_n(\sigma \wedge n^2)\big)$.  We skip the details. 
\end{proof}
\begin{prop}\label{prop:integrationbound}
For fixed $n \in \Zp$ and $t_ 0 > 0$, there exists constant $C = C(n, t_0)$ such that for every  $\sigma \geq 0$ and $t > t_0$, 
\begin{enumerate}[leftmargin = 20pt, label= (\alph*)]
\item \label{item:g11}
$\int_{-\infty}^{\infty}  |\phi_{e^{-t \sigma}, t}(x)| \ft(x)dx  \leq C t^{\frac{1}{6}}\exp\big(t \VV_1 (\sigma \wedge 4) - t \sigma\big)$
\item \label{item:g1n}
$\int_{-\infty}^{\infty} |\phi^{(n)}_{e^{-t \sigma}, t} (x)| \ft(x) dx \leq  C t^{\frac{1}{6}}\exp\big(t \VV_n (\sigma \wedge 4 n^2)\big)$
\item \label{item:g21}
$\int_{-\infty}^{\infty}  |\phi_{e^{-t \sigma}, t}(x)| \ftt(x) dx  \leq C t^{\frac{1}{6}}\exp\big(t \UU_1 (\sigma \wedge 1) - t \sigma\big)$
\item \label{item:g2n}
$\int_{-\infty}^{\infty} |\phi^{(n)}_{e^{-t \sigma}, t} (x)| \ftt(x) dx \leq  C t^{\frac{1}{6}}\exp\big(t \UU_n (\sigma \wedge n^2)\big).$
\end{enumerate}
\end{prop}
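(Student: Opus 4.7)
The plan is to treat all four parts through a common template driven by the pointwise bounds of Lemma \ref{lem:phibound}. Specialized to $s = e^{-t\sigma}$ these read $|\phi_{e^{-t\sigma},t}(x)| \leq 1 \wedge 2\,e^{-t\sigma + t^{1/3}x}$ for $k=0$ (parts (a), (c)) and $|\phinsig(x)| \leq C\bigl(e^{n t^{1/3}x} \wedge e^{n t\sigma}\bigr)$ for $k=n \geq 1$ (parts (b), (d)). In each case the two branches of the minimum coincide (up to a multiplicative constant) at the crossover point $x_{0} := t^{2/3}\sigma$, so I would split the integration domain into $(-\infty, x_0]$ and $[x_0, \infty)$ and estimate the two contributions separately.

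On the left piece $(-\infty, x_0]$ I keep the exponentially growing branch of the bound. After the change of variables $x = t^{2/3} y$ (Jacobian $t^{2/3}$, with $e^{n t^{1/3}x}$ becoming $e^{nty}$), the integral reduces to
\begin{equation*}
C\, t^{2/3} \int_{-\infty}^{\sigma} e^{nty}\, F_{\alpha,2}(t^{2/3} y)\, dy,
\end{equation*}
with $n=1$ in cases (a), (c), and with $\alpha = 1/3$ in (a), (b) and $\alpha = 2/3$ in (c), (d). Applying Lemma \ref{lem:VnUnlaplace} \ref{item:Vnlaplace} when $\alpha = 1/3$ and Lemma \ref{lem:VnUnlaplace} \ref{item:Unlaplace} when $\alpha = 2/3$ produces a factor of $C t^{-1/2}$ times $e^{t \VV_n(\sigma \wedge 4n^2)}$ or $e^{t \UU_n(\sigma \wedge n^2)}$ respectively. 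Multiplied by the Jacobian $t^{2/3}$, this yields $C t^{1/6}$ times the desired exponential, with the additional $e^{-t\sigma}$ prefactor in (a), (c) arising from the corresponding prefactor in $|\phi|$.

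On the right piece $[x_0, \infty)$ I use the constant-in-$x$ branch of the bound, namely $|\phinsig| \leq C e^{nt\sigma}$ (respectively $|\phi| \leq 1$ when $n = 0$), together with $F_{\alpha,2}(x) = e^{-\alpha x^{3/2}}$ valid on $x \geq 0 \supset [x_0,\infty)$. After the same substitution this piece is bounded by $C\, e^{n t\sigma}\, t^{2/3} \int_{\sigma}^{\infty} e^{-\alpha t y^{3/2}}\, dy$. At the matching endpoint $y = \sigma$ the combined exponent $n t\sigma - \alpha t\sigma^{3/2}$ equals $t\VV_n(\sigma)$ (for $\alpha = 1/3$) or $t\UU_n(\sigma)$ (for $\alpha = 2/3$); combining a standard Laplace-type tail estimate
\begin{equation*}
\int_{\sigma}^{\infty} e^{-\alpha t y^{3/2}}\, dy \leq \frac{C}{t\, \sigma^{1/2}}\, e^{-\alpha t \sigma^{3/2}} \qquad (\sigma \geq c/t),
\end{equation*}
with the trivial $\int_{0}^{\infty} e^{-\alpha t y^{3/2}} dy \leq C t^{-2/3}$ (used when $\sigma \leq c/t$), I obtain a bound of the form $C t^{1/6} e^{t \VV_n(\sigma)}$ (resp.\ $C t^{1/6} e^{t \UU_n(\sigma)}$). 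Monotonicity of $\VV_n$ past $4n^2$ (resp.\ $\UU_n$ past $n^2$) then gives $e^{t\VV_n(\sigma)} \leq e^{t\VV_n(\sigma \wedge 4n^2)}$ for all $\sigma \geq 0$ (and similarly for $\UU_n$), matching the right-hand side of the proposition.

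The main obstacle is precisely the right-piece analysis. The factor $e^{n t\sigma}$ coming from $|\phinsig|$ threatens to overwhelm the super-exponential decay of $F_{\alpha,2}$, and a careless estimate would fail to reproduce the sharp exponent $t\VV_n(\sigma \wedge 4n^2)$ (resp.\ $t\UU_n(\sigma \wedge n^2)$). The identities $n\sigma - \tfrac{1}{3} \sigma^{3/2} = \VV_n(\sigma)$ and $n\sigma - \tfrac{2}{3}\sigma^{3/2} = \UU_n(\sigma)$ are what make the decay of $F_{\alpha,2}$ at the crossover point $x_0$ exactly cancel the growth $e^{n t \sigma}$, so the choice of $\alpha$ is forced by which exponent one wants to recover. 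The polynomial prefactor $\sigma^{-1/2}$ from the tail estimate is the remaining source of friction; it is handled by the dichotomy $\sigma \lesssim 1/t$ versus $\sigma \gtrsim 1/t$, with any residual $t$-dependence for $t$ close to $t_0$ absorbed into the constant $C = C(n,t_0)$.
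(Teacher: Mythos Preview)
Your proposal is correct and follows essentially the same route as the paper: rescale $x \to t^{2/3} y$, split the domain at the crossover $y=\sigma$, apply Lemma~\ref{lem:VnUnlaplace} on the left piece, and use the constant branch of Lemma~\ref{lem:phibound} together with the explicit form $F_{\alpha,2}(t^{2/3}y)=e^{-\alpha t y^{3/2}}$ on the right piece. The only cosmetic difference is in the tail estimate: the paper uses the cruder uniform bound $\int_{\sigma}^{\infty} e^{-\alpha t y^{3/2}}\,dy = t^{-2/3}\int_{t^{2/3}\sigma}^{\infty} e^{-\alpha u^{3/2}}\,du \leq C\,t^{-2/3}\,e^{-\alpha t\sigma^{3/2}}$, which already suffices and avoids your dichotomy on~$\sigma$; after multiplying by the Jacobian $t^{2/3}$ this gives $C\,e^{t\VV_n(\sigma)}$ (resp.\ $C\,e^{t\UU_n(\sigma)}$), and the missing $t^{1/6}$ is absorbed via $1\leq t_0^{-1/6}t^{1/6}$ into $C(n,t_0)$.
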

\begin{proof}
We first prove \ref{item:g11}. Via a change of variable $x \to t^{\frac{2}{3}} x$, 
$$\int_{-\infty}^{\infty} |\phi_{e^{-t\sigma}, t}(x)| \ft(x) dx= t^{\frac{2}{3}}\int_{-\infty}^\infty |\phi_{e^{-t\sigma}, t}(t^{\frac{2}{3}}x)|   \ft(t^{\frac{2}{3}}x) dx.$$
We decompose the integral region on the right hand side above into $(-\infty,\sigma) \cup (\sigma, \infty)$ and write 
\begin{equation*}
\int_{-\infty}^\infty |\phi_{e^{-t\sigma}, t}(t^{\frac{2}{3}}x)|   \ft(t^{\frac{2}{3}}x) dx =  \Big(\int_{-\infty}^\sigma  + \int_{\sigma}^\infty \Big) |\phi_{e^{-t\sigma}, t}(t^{\frac{2}{3}}x)|   \ft(t^{\frac{2}{3}}x) dx = \As + \Ass
\end{equation*}
We provide upper bounds for $\As$ and $\Ass$ respectively. By Lemma \ref{lem:phibound}, $|\phi_{e^{-t\sigma}, t}(t^{\frac{2}{3}}x)| \leq 2 \exp\big(t(x - \sigma)\big)$, so
\begin{align}\label{eq:E1} 
\As \leq 2 e^{-t\sigma} \int_{-\infty}^\sigma e^{tx} \f_{\frac{1}{3}, 2} (x) dx \leq C t^{-\frac{1}{2}} e^{t V_1(\sigma \wedge 4) - t \sigma}.
\end{align}
where the last inequality above is due to Lemma \ref{lem:VnUnlaplace} \ref{item:Vnlaplace} (setting $n = 1$ therein). On the other hand, since $\f_{\frac{1}{3}, 2} (t^{\frac{2}{3}}x) = \exp(-\frac{1}{3} t x^{\frac{3}{2}})$ when $x \geq 0$,
\begin{equation}\label{eq:E2}
\Ass = \int_{\sigma}^\infty |\phi_{e^{-t\sigma}, t}(t^{\frac{2}{3}}x)|   e^{-\frac{1}{3}t x^{3/2}} dx \leq \int_{\sigma}^\infty e^{-\frac{1}{3}t x^{3/2}} dx = t^{-\frac{2}{3}}\int_{t^{\frac{2}{3}}\sigma}^{\infty}e^{-\frac{1}{3} x^{3/2}} dx  \leq C t^{-\frac{2}{3}} e^{-\frac{1}{3} t \sigma^{\frac{3}{2}}}
\end{equation}
The first inequality above is due to Lemma \ref{lem:phibound}, which yields $|\phi_{e^{-t\sigma}, t} (t^{\frac{2}{3}} x)| \leq 1$. The second inequality follows from a change of variable $x \to t^{-\frac{2}{3}} \sigma$, and the last inequality is due to the fact
$\int_y^{\infty} \exp(-\frac{1}{3} x^{\frac{3}{2}}) dx\leq C \exp(-\frac{1}{3} y^{\frac{3}{2}})$, which holds for all $y \geq 0$. Combining \eqref{eq:E1} and \eqref{eq:E2} and recall that $\int_{-\infty}^{\infty} \phi_{e^{-t\sigma}, t}(x) \ft(x) dx = t^{\frac{2}{3}} (\As + \Ass)$, we obtain
\begin{equation*}
\int_{-\infty}^\infty |\phi_{e^{-t\sigma}, t} (x)| \f_{\frac{1}{3}, 2} (x) dx \leq Ct^{\frac{2}{3}} \big(t^{-\frac{1}{2}}\exp(tV_1 (\sigma \wedge 4) - t\sigma) + t^{-\frac{2}{3}} e^{-\frac{1}{3} t \sigma^{\frac{3}{2}}}\big).
\end{equation*}
Since $V_1(\sigma \wedge 4)$ is the maximum of $V_1 (x)$ for $x \in [0, \sigma]$, $V_1 (\sigma \wedge 4) - \sigma \geq V_1 (\sigma) - \sigma = -\frac{1}{3} \sigma^{\frac{3}{2}}$, so the first term on the right hand side above dominates, this completes the proof of \ref{item:g11}.
\bigskip
\\
For the proof of \ref{item:g1n}, via a change of variable $x \to t^{\frac{2}{3}} x$, 
\begin{equation*}
\int_{-\infty}^{\infty} |\phi^{(n)}_{e^{-t \sigma}, t} (x)| \ft(x) dx  = t^{\frac{2}{3}} \int_{-\infty}^\infty |\phi^{(n)}_{e^{-t\sigma}, t} (t^{\frac{2}{3}} x)| \f_{\frac{1}{3}, 2} (t^{\frac{2}{3}} x) dx
\end{equation*}
Decompose the integral on the right hand side of the above display as
\begin{align*}
\int_{-\infty}^{\infty} |\phi^{(n)}_{e^{-t \sigma}, t} (t^{\frac{2}{3}} x)| \ft(t^{\frac{2}{3}} x) dx 
&=\Big(\int_{-\infty}^\sigma + \int_{\sigma}^\infty\Big)  |\phi^{(n)}_{e^{-t \sigma}, t}  (t^{\frac{2}{3}} x)| \ft(t^{\frac{2}{3}} x)dx = \EEp_1 + \EEp_2.
\end{align*}
Let us upper bound $\EEp_1$ and $\EEp_2$ respectively. By Lemma \ref{lem:phibound}, we know that  $|\phinsig(t^{\frac{2}{3}} x)| \leq C \exp(ntx)$. Using this together with Lemma \ref{lem:VnUnlaplace} \ref{item:Vnlaplace}, we get
\begin{align}\label{eq:E1p}
\EEp_1 \leq C\int_{-\infty}^\sigma e^{ntx} \f_{\frac{1}{3}, 2}(t^{\frac{2}{3}} x) dx \leq C t^{-\frac{1}{2}} \exp\big(t V_n(\sigma \wedge 4n^2)\big).
\end{align}
For $\EEp_2$, note that $\f_{\frac{1}{3}, 2}(t^{\frac{2}{3}}x)$ simplifies to $\exp(-\frac{1}{3} t x^{\frac{3}{2}})$ for $x \geq 0$. By Lemma \ref{lem:phibound}, $|\phinsig(t^{\frac{2}{3}} x)| \leq C \exp(nt\sigma)$. Using this inequality implies
\begin{align}\label{eq:E2p}
\EEp_2 = \int_{\sigma}^\infty \phinsig(t^{\frac{2}{3}} x) e^{-\frac{1}{3} t x^{\frac{3}{2}}} dx\leq C e^{t n\sigma} \int_{\sigma}^{\infty} e^{-\frac{1}{3}t x^{\frac{3}{2}}} dx \leq
C t^{-\frac{2}{3}} e^{t n\sigma - \frac{1}{3} t \sigma^{\frac{3}{2}}}.
\end{align}
Recall that $\int_{-\infty}^{\infty} |\phi^{(n)}_{e^{-t \sigma}, t} (x)| \ft(x) dx = t^{\frac{2}{3}} (\EEp_1 + \EEp_2)$, combining \eqref{eq:E1p} and \eqref{eq:E2p} yields 
\begin{equation*}
\int_{-\infty}^\infty |\phi_{e^{-t \sigma}, t}^{(n)} (x)| \f_{\frac{1}{3}, 2} (x) dx \leq C t^{\frac{2}{3}} \Big(t^{-\frac{1}{2}} e^{t V_n (\sigma \wedge 4n^2)} + t^{-\frac{2}{3}} e^{t (n\sigma - \frac{1}{3} \sigma^{\frac{3}{2}})}\Big)\leq C t^{\frac{1}{6}} \exp\big(t V_n (\sigma \wedge 4n^2)\big).
\end{equation*}
The last inequality above is due to $\VV_n(\sigma \wedge 4n^2) \geq \VV_n (\sigma) = n\sigma - \frac{1}{3} \sigma^{\frac{3}{2}}$. This completes the proof of \ref{item:g1n}. 
\smallskip
\\
The proof for \ref{item:g21}, \ref{item:g2n} follows a rather similar argument as for \ref{item:g11}, \ref{item:g1n}. Instead of using Lemma \ref{lem:VnUnlaplace} \ref{item:Vnlaplace}, one needs tp Lemma \ref{lem:VnUnlaplace} \ref{item:Unlaplace}. We omit the details here.
\end{proof}
 For any $L \geq 2$ and $\vec{m} = (m_1, \dots, m_L) \in \M(L, n)$, denote by 
\begin{equation}\label{eq:im}
\II_{\vec{m}} = \int_0^1 s^{-\alpha} ds \int_{\RL} \text{Pf}\big[K(x_i, x_j)\big]_{i, j=1}^L \prod_{i=1}^L \phi^{(m_i)}_{s, t} (x_i) dx_i.
\end{equation}
\begin{prop}\label{prop:Im}
Fix $p \in \Zn$ and  $t_0 > 0$. Recall that $n = \lfloor p \rfloor + 1$ and $\alpha = p+1 - n$. Define $\dd = \min(\frac{2}{3}, \frac{p^3}{4})$. There exists $C = C(n, t_0)$ such that for all $L \geq 2$, $t \geq t_0$ and $\vec{m} \in \M(L, n)$, we have
$|\II_{\vec{m}}| \leq C^L (2L)^{\frac{L}{2}} t^{\frac{L}{6}} e^{\frac{p^3}{3} t - \dd t}$.
\end{prop}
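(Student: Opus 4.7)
The plan is to reduce $\II_{\vec m}$ to a one-dimensional Laplace-type integral via the substitution $s = e^{-t\sigma}$, combine Proposition \ref{prop:pfbound} with Proposition \ref{prop:integrationbound} to estimate the resulting $L$-fold integral, and then analyze the supremum of the exponent in the remaining $\sigma$-integral. After the change of variable, $\II_{\vec m}$ becomes
\begin{equation*}
\II_{\vec m} = t\int_0^\infty e^{(\alpha-1)t\sigma}\int_{\RL}\pfll\prod_{i=1}^L\phi^{(m_i)}_{e^{-t\sigma}, t}(x_i)\, dx_1\cdots dx_L\, d\sigma.
\end{equation*}

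Next, I fix a threshold $L_* = L_*(p)$ and apply the two Pfaffian bounds from Proposition \ref{prop:pfbound} on complementary ranges. For $L \geq L_*$, I apply Proposition \ref{prop:pfbound} \ref{item:pfbound1}, which produces the target factor $(2L)^{L/2}$ and the $\ft$-envelope, and I then use Proposition \ref{prop:integrationbound} \ref{item:g11}--\ref{item:g1n} to bound each $x_i$-integral. For $2 \leq L < L_*$, I instead use Proposition \ref{prop:pfbound} \ref{item:pfbound2} paired with Proposition \ref{prop:integrationbound} \ref{item:g21}--\ref{item:g2n}; the $\sqrt{(2L)!}$ factor is harmlessly absorbed into $C^L$ because $L$ is bounded by $L_*$, which depends only on $p$. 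In either case, the $x$-integrals collapse and one is left with
\begin{equation*}
|\II_{\vec m}| \leq C^L (2L)^{L/2}\, t^{L/6+1}\int_0^\infty e^{tE(\sigma)}\, d\sigma,
\end{equation*}
where, with $L_0 = \#\{i:\, m_i = 0\}$, the exponent function reads $E(\sigma) = (\alpha-1)\sigma + L_0\bigl(F_1(\sigma\wedge\tau) - \sigma\bigr) + \sum_{i:\, m_i\geq 1} F_{m_i}(\sigma\wedge\tau_i)$, and $(F, \tau_i)$ equals $(V_\cdot, 4m_i^2)$ on the large-$L$ route and $(U_\cdot, m_i^2)$ on the small-$L$ route.

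The heart of the argument is to show $E^* := \sup_{\sigma \geq 0} E(\sigma) \leq \frac{p^3}{3} - \dd$. On the innermost piece, where $\sigma$ lies below every threshold $\tau_i$, the identity $\alpha - 1 + n = p$ collapses $E$ to $p\sigma - \frac{cL}{3}\sigma^{3/2}$ with $c = 1$ in the $V$-route and $c = 2$ in the $U$-route; single-variable calculus then produces the innermost suprema $\frac{4p^3}{3L^2}$ and $\frac{p^3}{3L^2}$ respectively. Choosing $L_* = 4$ makes $\frac{4p^3}{3L^2} \leq \frac{p^3}{3} - \dd$ for every $L \geq L_*$, handling the large-$L$ route; for the small-$L$ route applied at $L \in \{2, 3\}$, the $U$-version gives $\frac{p^3}{3L^2} \leq \frac{p^3}{12}$, leaving a gap of at least $\frac{p^3}{4} \geq \dd$. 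On the remaining $\sigma$-pieces the thresholds in $F_{m_i}(\sigma\wedge\tau_i)$ saturate and the strictly negative linear terms $(\alpha-1)\sigma$ and $-L_0\sigma$ drive $E(\sigma)$ below its innermost supremum; a finite case analysis over the signature of $\vec m$ (in particular, over whether $L_0 = 0$ or $L_0 \geq 1$, and over the size of $\max_i m_i$) finishes this step.

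Finally, a standard Laplace-type estimate gives $\int_0^\infty e^{tE(\sigma)}\, d\sigma \leq C t^{-1/2} e^{tE^*}$, and the residual polynomial-in-$t$ factors are subexponential, so they are absorbed at the cost of an arbitrarily small slackening of $\dd$, which is harmless because Proposition \ref{prop:Im} is invoked only under a $\limsup$ in the proof of Proposition \ref{prop:minor}. The main obstacle is the supremum analysis of the previous paragraph: it requires a careful case split over $\vec m \in \M(L, n)$, over the $\sigma$-regions delimited by the thresholds $\{\tau_i\}$, and over the two regimes of $\dd$ (namely $\dd = p^3/4$ when $p^3 \leq 8/3$ versus $\dd = 2/3$ when $p^3 \geq 8/3$). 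The key conceptual point is that Proposition \ref{prop:pfbound} \ref{item:pfbound1} alone fails for $L = 2$ --- its innermost supremum already equals $\frac{p^3}{3}$, leaving no gap at all --- so switching to Proposition \ref{prop:pfbound} \ref{item:pfbound2} with its faster $e^{-\frac{2}{3}x^{3/2}}$ decay is essential, and this switch is affordable precisely because $\sqrt{(2L)!}$ collapses to a constant whenever $L$ is bounded.
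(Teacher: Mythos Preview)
Your overall architecture matches the paper's: the substitution $s=e^{-t\sigma}$, the two-regime split using Proposition~\ref{prop:pfbound}~\ref{item:pfbound1} for large $L$ and~\ref{item:pfbound2} for small $L$, and the reduction to estimating $\sup_{\sigma\ge 0}E(\sigma)$ are all exactly what the paper does. The small-$L$ route is fine in outline---the paper's Stage~2 carries out precisely the ``finite case analysis over the signature of $\vec m$'' you allude to, splitting on whether $r=1$ or $r\ge 2$.

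The genuine gap is your threshold $L_*=4$. Your inner-region computation is correct: on $[0,4]$ the $V$-route gives $E(\sigma)=p\sigma-\tfrac{L}{3}\sigma^{3/2}$, with supremum $\tfrac{4p^3}{3L^2}\le\tfrac{p^3}{12}$ once $L\ge 4$. But the claim that on the outer pieces ``the strictly negative linear terms drive $E(\sigma)$ below its innermost supremum'' is false for a $p$-independent threshold. Take $r=1$, i.e.\ $\vec m=(n,0,\dots,0)$, and $\sigma\in[4,4n^2]$. Then
\[
E(\sigma)=(\alpha-1)\sigma+V_n(\sigma)+(L-1)\bigl(V_1(4)-\sigma\bigr)
=(p-L+1)\sigma-\tfrac{1}{3}\sigma^{3/2}+\tfrac{4}{3}(L-1).
\]
For $L=4$ and $p>3$ this has maximum $\tfrac{4}{3}(p-3)^3+4$, which for large $p$ is of order $\tfrac{4}{3}p^3$, far exceeding $\tfrac{p^3}{3}-\delta_p$. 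The paper avoids this by taking $L_*=2n^3$: then $L-r\ge 2n^3-n$, and the contribution $\sum_i V_{m_i}(\sigma\wedge 4m_i^2)\le\tfrac{4}{3}n^3$ is killed by $(L-r)(\tfrac{4}{3}-\sigma)\le-\tfrac{8}{3}(2n^3-n)$ on $\sigma\ge 4$. So the threshold must grow like $n^3$, not stay at $4$; once you make that change the rest of your outline goes through.

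A minor point: slackening $\delta_p$ to absorb polynomial factors does not prove Proposition~\ref{prop:Im} as stated, though you are right that it suffices for Proposition~\ref{prop:minor}. The paper instead bounds $\int_0^\infty e^{tE(\sigma)}d\sigma$ piecewise without appealing to a generic Laplace estimate, which keeps $\delta_p$ intact.
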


\begin{proof}
Without loss of generality, we assume that $m_1, \dots, m_r > 0$ and $m_{r+1} = \dots  = m_L = 0$, note that $1 \leq r \leq n \wedge L$. Referring to \eqref{eq:im}, by a change of variable $s = e^{-t\sigma}$, we obtain
\begin{equation*}
\II_{\vec{m}} = \int_0^\infty e^{t(\alpha -1) \sigma} d\sigma \int_{\RL} \pfll \prod_{i=1}^L \phi^{(m_i)}_{e^{-t\sigma}, t} (x_i) dx_i.
\end{equation*}
It suffice to show that the right hand side of the above display is upper bounded by $C^L (2L)^{\frac{L}{2}} t^{\frac{L}{6}} e^{\frac{p^3 t}{3} - \delta_p t}$. We divide our argument into two stages. We prove the inequality for $L \geq 2n^3$ in \emph{Stage 1} and \emph{Stage 2} will cover the case $2 \leq L < 2n^3$. 
\smallskip
\\
\emph{Stage 1.} $L \geq 2n^3$.
Via Proposition \ref{prop:pfbound} \ref{item:pfbound1}, $\big|\pfll\big|$ is upper bounded by $C^L (2L)^{\frac{L}{2}} \prod_{i=1}^L \f_{\frac{1}{3}, 2}(x_i)$, thus
\begin{align}\label{eq:im2}
|\II_{\vec{m}}| 
\leq C^L (2L)^{\frac{L}{2}} \int_0^{\infty} e^{t(\alpha - 1) \sigma} \prod_{i=1}^L \Big(\int_{\R} \f_{\frac{1}{3}, 2} (x) |\phi_{e^{-t\sigma}, t}^{(m_i)}(x)| dx\Big) d\sigma.
\end{align}
Applying Proposition \ref{prop:integrationbound} \ref{item:g11} and \ref{item:g1n}, there exists a constant $C=  C(n, t_0)$ such that for $t > t_0$,
\begin{equation*}
\int_{\mathbb{R}} F_{\frac{1}{3}, 2}(x)|\phi_{e^{-t\sigma}, t}^{(m_i)} (x)| dx \leq
\begin{cases}
C t^{\frac{1}{6}} \exp(t V_{m_i} (\sigma \wedge 4m_i^2) - t\sigma) & i \leq r
\\
C t^{\frac{1}{6}} \exp(t V_1 (\sigma \wedge 4) - t\sigma) & i > r.
\end{cases}
\end{equation*}
Applying this inequality to the right hand side of \eqref{eq:im2}, we find that $|\II_{\vec{m}}| \leq C^L (2L)^{\frac{L}{2}} t^{\frac{L}{6}} \int_0^\infty e^{t \MMone(\sigma)} d\sigma$, 
where $$\MMone(\sigma) = (\alpha -1)\sigma +  \sum_{i=1}^r\VV_{m_i}(\sigma \wedge 4m_i^2) + (L-r)(\VV_{1} (\sigma \wedge 4) - \sigma).$$  
To prove Proposition \ref{prop:Im}, it suffices to show  that for there exists $C = C(n, t_0)$, such that for all $t \geq t_0$ and $\vec{m} \in \mathfrak{M}(L, n)$,
\begin{equation}\label{eq:M1}
\int_0^\infty e^{t \MMone (\sigma)} d\sigma \leq Ce^{\frac{p^3}{3} t - \dd t}.
\end{equation}
where $\delta_p = \min(\frac{2}{3}, \frac{p^3}{4})$. To this aim, we decompose 
$$\int_0^\infty e^{t\MMone (\sigma)} d\sigma = \int_0^{4} e^{t \MMone(\sigma)} d\sigma + \int_{4}^\infty e^{t\MMone(\sigma)} d\sigma = \JJ_1 + \JJ_2.$$
For $\JJ_1$, since $\sigma \leq 4$, $\MMone$ simplifies to 
\begin{align*}
\MMone(\sigma) &= (\alpha- 1) \sigma + \sum_{i=1}^r \VV_{m_i} (\sigma) + (L-r) (\VV_1(\sigma)  - \sigma) \\
&= (\alpha - 1) + \sum_{i=1}^r (m_i \sigma - \frac{1}{3} \sigma^{\frac{3}{2}}) - \frac{1}{3}(L-r) \sigma^{\frac{3}{2}} = p\sigma - \frac{L}{3} \sigma^{\frac{3}{2}}.
\end{align*}
The last equality is due to $\sum_{i=1}^r m_i + \alpha -1 = n+\alpha -1 = p$.
 Since $L \geq 2n^3 \geq 2$, $\MMone(\sigma) = p\sigma - \frac{2L}{3} \sigma^{\frac{3}{2}} \leq p\sigma - \frac{4}{3} \sigma^{\frac{3}{2}} \leq \frac{p^3}{12}$. We find that 
\begin{equation}\label{eq:Jbound1}
\JJ_1 = \int_0^4 e^{t \MMone(\sigma)} d\sigma \leq \int_0^4 e^{\frac{p^3 t}{12}} d\sigma  \leq 4 e^{\frac{p^3 t}{12}}.
\end{equation}
For $\JJ_2$, since $\sigma \geq 4$, $\VV_1 (\sigma \wedge 4) = \VV_1 (4) = \frac{4}{3}$. Moreover, the maximum of $\VV_{m_i}(\sigma) = m_i \sigma - \frac{1}{3} \sigma^{\frac{3}{2}}$ equals $\frac{4}{3} m_i^3$, hence $\VV_{m_i} (\sigma \wedge 4m_i^2) \leq \frac{4}{3} m_i^3$. As a result,  
$$\MMone(\sigma)\leq (\alpha - 1)\sigma + \frac{4}{3} \sum_{i=1}^r m_i^3  + (L-r) (\frac{4}{3} - \sigma) \leq (\alpha - 1) \sigma + \frac{4}{3} n^3  - (L-r) \frac{8}{3}.$$
The last inequality follows from the fact that $\sum_{i=1}^r m_i^3 \leq (\sum_{i=1}^r m_i)^3 = n^3$ and $\frac{4}{3} - \sigma \leq -\frac{8}{3}$. Note that $r$ is the number of $m_i$ which is non-zero, so $r \leq n$. Moreover, since $L \geq 2n^3$,
$$\MMone(\sigma) \leq \frac{4}{3} n^3 - (2n^3-n) \frac{8}{3} + (\alpha - 1) \sigma \leq  (\alpha- 1)\sigma$$ Consequently, we have 
$
\JJ_2 = \int_4^\infty e^{t \MMone (\sigma)} d\sigma \leq \int_0^{\infty} e^{(\alpha -1) \sigma t} d\sigma = \frac{t^{-1}}{1-\alpha}.
$
Combining this with \eqref{eq:Jbound1} yields that for all $t > t_0$, (note that $\frac{p^3}{12} \leq \frac{p^3}{3} - \dd$)
\begin{equation*}
\int_0^\infty e^{t \MMone (\sigma)} d\sigma = \JJ_1 + \JJ_2 \leq 4 e^{\frac{p^3 t}{12}} + \frac{t^{-1}}{1-\alpha} \leq C e^{\frac{p^3}{3} t - \dd t}.
\end{equation*}
We prove the desired \eqref{eq:M1} and conclude our proof for \emph{Stage 1}.
\smallskip
\\
\emph{Stage 2.} $2 \leq L\leq 2n^3$.
Via Proposition \ref{prop:pfbound} \ref{item:pfbound2}, $\big|\pfll\big|$ is bounded by $C^L \prod_{i=1}^L \f_{\frac{2}{3}, 2}(x_i)$. Note that we throw out the multiplier $\sqrt{2L}!$ in the upper bound since it is bounded by a constant that only depends on $n$ when $L \leq 2n^3$. Thus
\begin{align*}
|\II_{\vec{m}}| 
&= C^L \int_0^{\infty} e^{t(\alpha - 1) \sigma} \prod_{i=1}^L \Big(\int_{\R} \f_{\frac{2}{3}, 2} (x) |\phi_{e^{-t\sigma}, t}^{(m_i)}(x)| dx\Big) d\sigma
\end{align*}
Applying Proposition \ref{prop:integrationbound} \ref{item:g21}, \ref{item:g2n}. For each $\int_{\mathbb{R}} \f_{\frac{2}{3}, 2} (x) |\phi^{(m_i)}_{e^{-t\sigma, t}} (x)| dx$,  $i = 1, \dots, r$, since $m_i \in \Zp$, this integral can be upper bounded by $C t^{\frac{1}{6}} \exp\big(t U_{m_i} (\sigma \wedge m_i^2)\big)$. When $i \geq r+1$, $m_i = 0$, the integral can be upper bounded by $C t^{\frac{1}{6}} \exp\big(t U_1 (\sigma \wedge 1) - t\sigma\big) $. Therefore, there exists a constant  $C = C(n, t_0)$ such that for all $t > t_0$, $2 \leq L \leq 2n^3$ and $\vec{m} \in \mathfrak{M}(L, n)$,  
\begin{equation}\label{eq:imbound1}
|\II_{\vec{m}}| \leq C^L  t^{\frac{L}{6}} \int_0^{\infty} e^{t \MMtwo(\sigma)} d\sigma 
\end{equation}
where 
$\MMtwo(\sigma) = (\alpha - 1) \sigma + \sum_{i=1}^r \UU_{m_i} (\sigma \wedge m_i^2) + (L-r) \big(\UU_1 (\sigma \wedge 1) - \sigma\big).$   To conclude the proof of Proposition \ref{prop:Im}, it suffices to show that  there exists $C = C(n, t_0)$ such that for all $t > t_0$ and $L \geq 2$ and $\vec{m} \in \mathfrak{M}(L, n)$,
\begin{equation}\label{eq:MM2}
\int_0^{\infty} e^{t \MMtwo(\sigma)} d\sigma \leq C e^{\frac{p^3}{3} t - \dd t}. 
\end{equation}
Once this is shown, applying \eqref{eq:imbound1} completes the proof of the Proposition \ref{prop:Im}. 
\smallskip
\\
We are left to show \eqref{eq:MM2}. To this aim, we divide our argument into two cases, depending on $r = 1$ or not.
\smallskip
\\
\textbf{Case 1.} $r = 1$. In this case, $m_1 = n$ and $m_i = 0$ for $i > 1$. As a result,  \begin{equation}\label{eq:Mfunc}
\MMtwo (\sigma) = (\alpha -1)\sigma +  \UU_n(\sigma \wedge n^2) + (L-1)\big(\UU_{1} (\sigma \wedge 1) - \sigma\big).
\end{equation}
We decompose 
\begin{equation*}
\int_0^{\infty} e^{t\MMtwo(\sigma)} d\sigma = \Big(\int_0^1 + \int_1^n + \int_n^{\infty}\Big) e^{t \MMtwo(\sigma)} d\sigma =  \II_1 + \II_2 + \II_3,
\end{equation*}  
and we are going to upper bound $\II_1, \II_2, \II_3$ respectively.
\smallskip
\\
For  $\II_1$, when $\sigma \leq 1$, the right hand side of \eqref{eq:Mfunc} can be simplified as 
$$\MMtwo (\sigma) = (\alpha - L) \sigma + \UU_n(\sigma) + (L-1) \UU_1(\sigma) =  p\sigma - \frac{2L}{3} \sigma^{\frac{3}{2}}.$$ 
Since $L \geq 2$, similar to the discussion in \emph{Stage 1},   
$
\MMtwo(\sigma) = p \sigma - \frac{2L}{3} \sigma^{\frac{3}{2}} \leq p\sigma  - \frac{4}{3}  \sigma^{\frac{3}{2}} \leq \frac{p^3}{12}.
$
Thereby, 
\begin{equation}\label{eq:i1bound}
\II_1 = \int_0^{1} e^{t \MMtwo (\sigma)} d\sigma \leq  \int_0^1 e^{\frac{p^3 t}{12}} d\sigma = e^{\frac{p^3 t}{12}}.
\end{equation}
For $\II_2$, when $1 \leq \sigma \leq n^2$, We can simplify $\MMtwo (\sigma) = p\sigma - \frac{2}{3} \sigma^{\frac{3}{2}} + (L-1)\big(\frac{1}{3} - \sigma\big)$. Note that the maximum of $p\sigma - \frac{2}{3} \sigma^{\frac{3}{2}}$ under the condition $\sigma  \geq 0$ equals $\frac{1}{3} p^3$. Using this in conjunction with $L \geq 2$, 
$\MMtwo(\sigma) \leq \frac{1}{3} p^3 + \frac{1}{3} - \sigma$. As a result, 
\begin{equation}\label{eq:i2bound}
\II_2 = \int_1^{n^2} e^{t \MMtwo (\sigma)} d\sigma \leq e^{\frac{p^3}{3} t}\int_1^{\infty} e^{(\frac{1}{3} - \sigma) t} d\sigma = t^{-1} e^{\frac{p^3-2}{3} t}.
\end{equation}
For $\II_3$, the right hand side of \eqref{eq:Mfunc} simplifies to 
$$\MMtwo(\sigma) = (\alpha - 1) \sigma + \UU_n(n^2) + (L-1) (\UU_1(1) - \sigma) =   (\alpha - 1)\sigma + \frac{1}{3} n^3  + (L-1)(\frac{1}{3} - \sigma).$$ 
Since $\alpha < 1$, $\sigma \geq n^2$ and $L \geq 2$, 
\begin{equation*}
\MMtwo(\sigma) \leq  n^2(\alpha - 1) +  \frac{1}{3} n^3 + (L-1)(\frac{1}{3} - \sigma) \leq n^2(\alpha - 1) +  \frac{1}{3} n^3 + (\frac{1}{3} - \sigma)
\end{equation*}
Note that $n^2(\alpha - 1) + \frac{1}{3} n^3 \leq \frac{1}{3} (n+ \alpha -1)^3 = \frac{1}{3} p^3$, hence $\MMone(\sigma) \leq \frac{1}{3} p^3 + \frac{1}{3} - \sigma$. 
Thereby, 
\begin{equation}\label{eq:i3bound}
\int_{n^2}^{\infty} e^{t \MMtwo(\sigma)} d\sigma \leq e^{\frac{p^3 t}{3}} \int_{n^2}^\infty e^{(\frac{1}{3} - \sigma)t}  d\sigma = t^{-1} e^{(\frac{p^3+1}{3}-n^2) t} \leq t^{-1} e^{\frac{p^3 - 2}{3} t}.
\end{equation}
Combining \eqref{eq:i1bound}, \eqref{eq:i2bound} and \eqref{eq:i3bound}, we conclude that for $t \geq t_0$,
\begin{equation*}
\int_0^\infty e^{t \MMtwo(\sigma)} d\sigma \leq  \Big( e^{\frac{1}{12} p^3 t} +  t^{-1} e^{\frac{p^3-2}{3} t} + t^{-1} e^{\frac{p^3 -2}{3} t}\Big) \leq C e^{\frac{1}{3} p^3t - \delta_p t},
\end{equation*}
The last inequality follows since $\delta_p = \min(\frac{2}{3}, \frac{1}{4}p^3)$. So far we have shown \eqref{eq:MM2} when $r = 1$.
\smallskip
\\
\textbf{Case 2.} $r \geq 2$. This implies $n \geq 2$. We write 
\begin{equation*}
\int_0^{\infty} e^{t\MMtwo(\sigma)} d\sigma = \Big(\int_0^{1} + \int_1^\infty\Big) e^{t\MMtwo(\sigma)} d\sigma =  \IIt_1 + \IIt_2
\end{equation*} 
For $\IIt_1$, when $\sigma \leq 1$, $\MMtwo(\sigma) = p\sigma - \frac{2}{3} L \sigma^3$. Via the same argument as in \textbf{Case 1},  we conclude that $\IIt_1 \leq  e^{\frac{1}{12} p^3 t}$.
For $\IIt_2$,  using the inequality  $\UU_{m_i}(\sigma \wedge m_i^2) \leq \frac{1}{3} m_i^3 $ and $\UU_1(\sigma \wedge 1) = \UU_1 (1) = \frac{1}{3}$, we get
\begin{align}\label{eq:IIt2}
\MMtwo(\sigma) &\leq \frac{1}{3} \sum_{i=1}^r  m_i^3 + \frac{1}{3}(L-r)  + (\alpha - L +r -1) \sigma \leq \frac{1}{3} \sum_{i=1}^r m_i^3 + (\alpha - 1) \sigma
\end{align} 
Since we assume $r \geq 2$, it is convincible that $\sum_{i=1}^r \ m_i^3$ is at most $(n-1)^3 + 1$, since the cubic sum will increase if we let mass concentrate on fewer terms. To justify this, note that $\sum_{i=2}^r m_i^3 \leq \big(\sum_{i=2}^r m_i\big)^3 = (n-m_1)^3$. Thus $$\sum_{i=1}^r m_i^3 \leq m_1^3 + (n-m_1)^3 = (n-1)^3 + 1 + 3n(m_1 - 1)(m_1 - (n-1)) \leq (n-1)^3 + 1.$$
Applying this inequality to the right hand side of \eqref{eq:IIt2}, we see that 
$$\MMtwo(\sigma) \leq \frac{1}{3} \big((n-1)^3 + 1\big) + (\alpha - 1) \sigma \leq \frac{1}{3} (n-1)^3 + \alpha - \frac{2}{3} \leq \frac{1}{3} (\alpha +n -1)^3 - \frac{2}{3}.$$
The second inequality above follows from $\sigma \geq 1$ and the third equality is due to $\frac{1}{3} (n-1)^3 + \alpha \leq \frac{1}{3} (n-1)^3 + (n-1)^2 \alpha \leq 
\frac{1}{3} (n-1 + \alpha)^3$.
Recall that $p  = \alpha + n- 1$, we obtain $\MMtwo(\sigma) \leq \frac{1}{3} p^3 - \frac{2}{3} + (\alpha -1) \sigma$, and thus $\IIt_2 \leq  \int_1^\infty \exp\big(t( \frac{1}{3} p^3 - \frac{2}{3} + (\alpha -1) \sigma)\big) d\sigma = \frac{1}{1-\alpha} e^{(\frac{1}{3} p^3 - \frac{2}{3}) t}$.
So there exists a constant $C$ such that for $t > t_0$, 
$$\int_0^{\infty} e^{t \MMtwo (\sigma)} d\sigma =\IIt_1 + \IIt_2 \leq e^\frac{p^3 t}{12} + (1-\alpha)^{-1} e^{\frac{p^3-2}{3} t} \leq Ce^{\frac{p^3}{3} t -\dd t}.$$
This implies \eqref{eq:MM2} and completes the proof of \emph{Stage 2}.
\end{proof}
\begin{proof}[Proof of Proposition \ref{prop:minor}]
It suffices to prove that for fixed $p > 0$, there exists a constant $C  = C(p)$ such that for all $L \geq 2$ and $t > 1$,
\begin{equation}\label{eq:BpLbound}
|\BBpL(t)| \leq \frac{C^L (2L)^{\frac{L}{2}}}{L!} t^{\frac{L}{6}} e^{\frac{p^3}{3} t - \delta_p t}.
\end{equation}
One this is shown, we conclude our proof by observing
\begin{equation}\label{eq:temp6}
\Big|\sum_{L=2}^{\infty} \BBpL(t)\Big| \leq \sum_{L=2}^\infty |\BBpL(t)| \leq e^{\frac{p^3}{3} t - \delta_p t} \sum_{L=2}^\infty \frac{C^L (2L)^{\frac{L}{2}} t^{\frac{L}{6}}}{L!}
\end{equation}
Using the inequality of Stirling's formula, ew know that $L^L \leq e^L L!$ for all $L \in \Zp$. Consequently, $(2L)^{\frac{L}{2}} = 2^L (L^L)^\frac{1}{2} \leq 2^L e^{\frac{L}{2}} \sqrt{L!}$.  So there exists constant $C_1, C_2$ such that for all $t > 1$,
\begin{equation*}
\sum_{L=2}^\infty \frac{C^L (2L)^{\frac{L}{2}} t^{\frac{L}{6}}}{L!} \leq \sum_{L=2}^\infty \frac{C_1^L t^{\frac{L}{6}}}{\sqrt{L!}} \leq e^{C_2 t^{\frac{1}{3}}}.
\end{equation*}
Combining the above inequality with \eqref{eq:temp6}, the left hand side of \eqref{eq:temp6} is upper bounded by $\exp(p^3 t/3 - \delta_p t + C_2 t^{\frac{1}{3}})$. 
Taking the logarithm and dividing by $t$ for both sides and letting $t \to \infty$ completes the proof of Proposition \ref{prop:minor}.
\smallskip
\\
We are left to show \eqref{eq:BpLbound}. referring to \eqref{eq:bpL} and using Leibniz's rule,
\begin{equation}\label{eq:BpL1}
\mathcal{B}_{p, L} (t) = \frac{(-1)^n}{\Gamma (1 - \alpha)}  \sum_{\vec{m} \in \M(L, n)}\int_0^{1} s^{-\alpha} \frac{1}{L!} \int_{\RL} \pfll   \prod_{i=1}^L \phi^{(m_i)}_{s, t}(x_i) dx
=  \frac{(-1)^n}{\Gamma(1-\alpha) L!} \sum_{\vec{m} \in \M(L, n)}\binom{n}{\vec{m}} \II_{\vec{m}} 
\end{equation} 
where $\II_{\vec{m}}$ is defined in \eqref{eq:im}.
Using  Proposition \ref{prop:Im} and the above display, recalling that  $\# \M(L, n)$ represents the number of elements that lie in $\M (L, n)$, we get for all $L \geq 2$, 
\begin{equation*}
|\BBpL(t)| \leq \frac{1}{\Gamma(1 - \alpha) L!} \sum_{\vec{m} \in \M(L, n)}\binom{n}{\vec{m}} |\II_{\vec{m}}| \leq \frac{n!}{\Gamma(1-\alpha)}   (\# \M(L, n)) \max_{\vec{m} \in \M(L, n)}|\II_{\vec{m}}|
\end{equation*}
where the first inequality follows from taking the absolute value of both sides of \eqref{eq:BpL1} and applying triangle inequality to the right hand side. The second inequality follows from upper bounding $\binom{n!}{\vec{m}}$ with $n!$. Recall from \eqref{eq:MLn} that $\# \M(L, n) \leq L^n$. To prove \eqref{eq:BpLbound}, applying Proposition \ref{prop:Im} to upper bound each $|\II_{\vec{m}}|$, we obtain
$$|\BBpL(t)| \leq \frac{n! L^n}{\Gamma(1 - \alpha) L!}  C^L (2L)^{\frac{L}{6}} t^{\frac{L}{6}} e^{\frac{p^3 t}{3} - \delta_p t}.$$
Note that $L^n$ grows slower than $C_1^L$ for $C_1 > 1$, as $L \to \infty$. So there exists a constant $C_1$ such that  $\frac{n! L^n}{\Gamma(1-\alpha)} C^L \leq C_1^L$. Applying this inequality to the right hand side of the above display   completes the proof of \eqref{eq:BpLbound}.
\end{proof}
\appendix 
\section{Basic facts of Airy function}
 In this section, we review some basic properties of the Airy function. 
\begin{lemma}\label{lem:airyproperty}
We have the following asymptotics for Airy function
\begin{align*}
\emph{Ai}(x) 
\sim 
\begin{cases}
\frac{e^{-\frac{2}{3} x^{\frac{3}{2}}}}{2 \sqrt{\pi} x^{\frac{1}{4}}} \hspace{9em} x \to +\infty,\\ 
\frac{1}{\sqrt{\pi} |x|^{\frac{1}{4}}}\cos\Big(\frac{\pi}{4} - \frac{2 |x|^{\frac{3}{2}}}{3}\Big) \qquad\, x \to -\infty.
\end{cases}
\quad
\emph{Ai}'(x) \sim 
\begin{cases}
-\frac{x^{\frac{1}{4}} e^{-\frac{2}{3} x^{\frac{3}{2}}}}{2 \sqrt{\pi}}  \hspace{8em} x \to +\infty,\\ 
-\frac{|x|^{\frac{1}{4}}}{\sqrt{\pi} }\cos\Big(\frac{\pi}{4} + \frac{2 |x|^{\frac{3}{2}}}{3}\Big) \hspace{3.3em} x \to -\infty.
\end{cases}
\end{align*}
\end{lemma}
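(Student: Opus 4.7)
The plan is to derive all four asymptotics from the standard contour integral representation
\begin{equation*}
\text{Ai}(x) = \frac{1}{2\pi i} \int_{\gamma} \exp\!\Big(\tfrac{z^3}{3} - xz\Big)\, dz,
\end{equation*}
where $\gamma$ runs from $\infty e^{-i\pi/3}$ to $\infty e^{i\pi/3}$, via the method of steepest descent. For $\text{Ai}'$, I would just differentiate under the integral sign to pick up the extra factor $-z$ and rerun the same analysis.

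For $x \to +\infty$, I would rescale $z = \sqrt{x}\, w$ so the integral becomes $\frac{\sqrt{x}}{2\pi i} \int \exp\!\big(x^{3/2}(w^3/3 - w)\big) dw$. The phase $\phi(w) = w^3/3 - w$ has saddle points at $w = \pm 1$, and the contour can be deformed to pass through $w = +1$ along the steepest descent direction (the imaginary axis shifted to $w=1$). Expanding $\phi(w) = -\tfrac{2}{3} + (w-1)^2 + \tfrac{1}{3}(w-1)^3$, a Gaussian integration in the local variable gives the prefactor $\sqrt{2\pi/(2 x^{3/2})}$, and combining yields the claimed $\frac{x^{-1/4}}{2\sqrt{\pi}} e^{-\frac{2}{3}x^{3/2}}$. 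The analog for $\text{Ai}'$ drops in an extra factor of $-z = -\sqrt{x}\,w$ evaluated at $w=1$, producing the extra $-\sqrt{x}$, i.e. the factor $-x^{1/4}/(2\sqrt{\pi})$ in the asymptotic.

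For $x \to -\infty$, write $x = -y$ with $y \to +\infty$ and rescale $z = \sqrt{y}\, w$, giving phase $\phi(w) = w^3/3 + w$ whose saddle points lie at $w = \pm i$, with $\phi(\pm i) = \pm \tfrac{2i}{3}$. Both saddles now contribute (after deforming $\gamma$ into a contour that crosses each one along its local steepest descent direction), and the two complex conjugate contributions combine to give an oscillatory real answer. Tracking the Gaussian prefactors together with the phase factors $e^{\pm i\pi/4}$ arising from the orientation of the steepest descent directions at $\pm i$ produces the cosine with argument $\tfrac{\pi}{4} - \tfrac{2}{3}|x|^{3/2}$; the derivative case introduces the factor $\mp i$ from $-z$ at $\pm i$, which is precisely what converts the cosine phase from $\tfrac{\pi}{4} - \tfrac{2}{3}|x|^{3/2}$ to $\tfrac{\pi}{4} + \tfrac{2}{3}|x|^{3/2}$ and adds the $|x|^{1/4}$ prefactor.

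The main technical obstacle, as in any steepest descent argument, will be the rigorous justification of the contour deformation and the uniform control of the tail contributions away from the saddles, so that the formal leading-order Gaussian calculations actually capture the true asymptotic with the stated error. In the $x\to -\infty$ case one must additionally be careful about the correct Stokes sector so that both saddles legitimately contribute and their phases combine to give exactly $\cos(\tfrac{\pi}{4} \mp \tfrac{2}{3}|x|^{3/2})$ rather than a shifted version; this is the step most prone to sign and phase errors and where I would spend the bulk of my verification time. All of this, however, is entirely classical, so I would most likely just cite a standard reference (e.g., Olver's \emph{Asymptotics and Special Functions}) rather than reproduce the estimates in detail.
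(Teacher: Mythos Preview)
Your proposal is correct: the steepest descent analysis of the contour integral representation is exactly the classical route to these asymptotics, and your sketch of the saddle-point locations, local expansions, and the combination of the two conjugate saddles in the oscillatory regime is accurate. The paper, however, does not carry out any of this; its entire proof is a one-line citation to the numbered formulas 10.4.59--10.4.62 in Abramowitz and Stegun. So your approach is not so much different as simply more detailed: you outline the derivation that underlies the cited reference, whereas the paper treats the result as a black box from the tables. Since you yourself conclude by saying you would ``just cite a standard reference (e.g., Olver)'' rather than reproduce the estimates, the two approaches effectively converge.
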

\begin{proof}
See  Eq 10.4.59-10.4.62 of \cite{AS48}.
\end{proof}
\begin{lemma}\label{lem:airyintegral}
We have
$\int_{-\infty}^{\infty} \emph{Ai}(x) dx = 1.$ and $\int_{-\infty}^0 \emph{Ai}(x) dx = 1/3$.
\end{lemma}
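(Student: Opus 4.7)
The plan is to establish both identities via Abel regularization, exploiting the cosine integral representation
\[
\Ai(x) = \frac{1}{\pi}\int_0^\infty \cos\!\Big(\frac{t^3}{3} + xt\Big)\,dt.
\]
Since $\Ai(x) = O(|x|^{-1/4})$ with oscillation as $x \to -\infty$, both improper integrals are only conditionally convergent and Fubini is not directly available.

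For the first identity, one reads off from the cosine representation that the Fourier transform of $\Ai$ (as a tempered distribution) is $\widehat{\Ai}(\xi) = e^{i\xi^3/3}$, so $\int_\R \Ai(x)\,dx = \widehat{\Ai}(0) = 1$. To make this rigorous I would insert a Gaussian cutoff $e^{-\varepsilon x^2}$, exchange the resulting absolutely convergent double integral by Fubini, and let $\varepsilon \to 0^+$; the limit is identified using that the Gaussian convolution kernel is an approximate identity applied to the bounded continuous function $\xi \mapsto e^{i\xi^3/3}$ at $\xi = 0$.

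For the half-line identity, I would introduce the regularized integral
\[
J(s) := \int_{-\infty}^0 e^{sx}\,\Ai(x)\,dx, \qquad s > 0,
\]
which is absolutely convergent thanks to the factor $e^{sx}$. Substituting the cosine representation and applying Fubini, the inner $x$-integral over $(-\infty, 0)$ evaluates in closed form to a linear combination of $\frac{s}{s^2+t^2}$ and $\frac{t}{s^2+t^2}$, yielding an explicit formula for $J(s)$. Passing to the limit $s \to 0^+$, one summand behaves as a Poisson kernel concentrated at $t = 0$, while the other reduces via the substitution $u = t^3/3$ to the Dirichlet integral $\int_0^\infty \frac{\sin u}{u}\,du = \frac{\pi}{2}$, from which the stated value follows.

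The main obstacle is justifying the $s \to 0^+$ limit in the oscillatory tail: the Poisson piece is handled by standard approximate-identity arguments, but the Dirichlet-type piece requires either an integration-by-parts estimate or a dominated convergence argument on a smoothed version, since the underlying integral converges only conditionally. An alternative and much shorter route is simply to invoke the classical tabulation in Abramowitz--Stegun, e.g.\ equations 10.4.81--10.4.82, as was done for Lemma \ref{lem:airyproperty}.
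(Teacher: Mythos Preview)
The paper does not actually prove this lemma; its ``proof'' is a bare citation to page~431 of Olver's textbook. Your direct derivation via the oscillatory integral representation and Abel regularization is therefore a genuinely different and more self-contained route, and the outline is sound. One technical caveat: the Fubini step in evaluating $J(s)$ is not immediate, because the cosine representation of $\Ai$ is itself only conditionally convergent in $t$, so the double integral over $(t,x)\in(0,\infty)\times(-\infty,0)$ is not absolutely convergent even with the damping factor $e^{sx}$; you would need a second regularizer in $t$ (e.g.\ $e^{-\varepsilon t^3}$) or the absolutely convergent contour representation of $\Ai$ before swapping. More interestingly, if you carry the arithmetic through, the Poisson-kernel piece contributes $\tfrac{1}{\pi}\cdot\tfrac{\pi}{2}=\tfrac12$ and the Dirichlet piece contributes $\tfrac{1}{3\pi}\int_0^\infty\tfrac{\sin u}{u}\,du=\tfrac16$, so your method yields $\int_{-\infty}^0\Ai(x)\,dx=\tfrac23$, not the stated $\tfrac13$. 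This is a misprint in the lemma: the value $\tfrac23$ is the one actually used downstream in the proof of Lemma~\ref{lem:k12bound} (where $\tfrac12\cdot\tfrac23=\tfrac13$ appears), and it agrees with the standard references.
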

\begin{proof}
See page 431 of \cite{Olver97}.
\end{proof}
\begin{lemma}\label{lem:airysquareintegral}
There exists constant $C$ such that 
\begin{align*}
&\frac{1}{C (x+1)} e^{-\frac{4}{3} x^{\frac{3}{2}}}\leq \int_0^{\infty} \emph{Ai}(x+\lambda)^2 d\lambda \leq \frac{C}{x+1} e^{-\frac{4}{3} x^{\frac{3}{2}}} \quad\, \forall\, x \geq 0 \\
\label{eq:temp14}
&\frac{1}{C} (\sqrt{|x|} + 1) \leq \int_0^{\infty} \emph{Ai}(x+\lambda)^2 d\lambda \leq C (\sqrt{|x|} + 1)  ,\qquad \forall\,x \leq 0
\end{align*}
\begin{proof}
This is Eq 2.8 and Eq 2.9 of \cite{DT19}.
\end{proof}
\end{lemma}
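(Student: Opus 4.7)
The plan is to treat the two regimes separately, in both cases exploiting the substitution $u = x+\lambda$ to rewrite $\int_0^\infty \mathrm{Ai}(x+\lambda)^2\,d\lambda = \int_x^\infty \mathrm{Ai}(u)^2\,du$, and then invoking the asymptotics from Lemma \ref{lem:airyproperty}. A preliminary step, easily extracted from Lemma \ref{lem:airyproperty} together with continuity of $\mathrm{Ai}$ on compacta, is that there exist constants $c, C > 0$ such that $c\,(1+u)^{-1/2} e^{-\frac{4}{3} u^{3/2}} \leq \mathrm{Ai}(u)^2 \leq C\,(1+u)^{-1/2} e^{-\frac{4}{3} u^{3/2}}$ for all $u \geq 0$ (the lower bound for $u$ beyond some fixed constant, the remaining interval handled by positivity and continuity of $\mathrm{Ai}^2$).

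For $x \geq 0$, the problem reduces to estimating $J(x) := \int_x^\infty u^{-1/2} e^{-\frac{4}{3} u^{3/2}} du$ (with $u^{-1/2}$ harmlessly replaced by $(1+u)^{-1/2}$ in the integrable region near zero). The key identity is $\frac{d}{du} e^{-\frac{4}{3}u^{3/2}} = -2 u^{1/2} e^{-\frac{4}{3} u^{3/2}}$, giving $u^{-1/2} e^{-\frac{4}{3} u^{3/2}} = -\frac{1}{2u} \frac{d}{du} e^{-\frac{4}{3} u^{3/2}}$. Integration by parts yields, for $x > 0$,
\[
J(x) = \frac{e^{-\frac{4}{3} x^{3/2}}}{2x} - \int_x^\infty \frac{e^{-\frac{4}{3} u^{3/2}}}{2 u^2}\,du.
\]
Since $u^{-2} \leq x^{-3/2} u^{-1/2}$ for $u \geq x$, the remainder is bounded by $\frac{1}{2x^{3/2}} J(x)$, so for $x$ beyond some fixed $x_0$, solving gives $J(x) \asymp x^{-1} e^{-\frac{4}{3} x^{3/2}}$. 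For $x$ in a compact range $[0, x_0]$, both $J(x)$ and the claimed bound $(x+1)^{-1} e^{-\frac{4}{3} x^{3/2}}$ are continuous and strictly positive, so they are comparable by compactness. This establishes the first inequality.

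For $x \leq 0$, write $M = |x|$ and split $\int_{-M}^\infty \mathrm{Ai}(u)^2 du = \int_0^\infty \mathrm{Ai}(u)^2 du + \int_0^M \mathrm{Ai}(-v)^2 dv$; the first summand is a fixed positive constant. For the second, Lemma \ref{lem:airyproperty} gives, for $v \geq 1$, $\mathrm{Ai}(-v)^2 = \frac{1}{\pi v^{1/2}}\cos^2\bigl(\frac{\pi}{4} - \frac{2}{3} v^{3/2}\bigr) + O(v^{-2})$, where the error is integrable on $[1,\infty)$. Writing $\cos^2\theta = \frac12 + \frac12\cos 2\theta$, the non-oscillatory part contributes $\int_1^M \frac{dv}{2\pi v^{1/2}} \sim \frac{\sqrt{M}}{\pi}$, which provides the claimed leading order. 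The oscillatory contribution involves $\int_1^M \frac{\sin(\frac{4}{3} v^{3/2})}{v^{1/2}} dv$ (up to phases). Using $\sin(\frac{4}{3}v^{3/2}) = -\frac{1}{2v^{1/2}}\frac{d}{dv}\cos(\frac{4}{3} v^{3/2})$, one rewrites this as $-\int_1^M \frac{1}{2v} d\cos(\frac{4}{3} v^{3/2})$ and integrates by parts to produce a boundary term and an absolutely integrable remainder, both uniformly bounded in $M$. The contribution from $v \in [0,1]$ is $O(1)$ by continuity of $\mathrm{Ai}$. Summing all contributions gives the two-sided $\sqrt{|x|}+1$ bound. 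The main obstacle is the lower bound in this regime: one must verify that the oscillatory and error terms are small enough, in an $M$-uniform way, not to cancel the leading $\frac{\sqrt{M}}{\pi}$ contribution; the integration-by-parts estimate for oscillatory integrals is exactly what delivers this.
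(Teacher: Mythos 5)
Your proof is correct, but it is genuinely different from the paper's: the paper does not prove this lemma at all and simply cites Eq.\ 2.8 and Eq.\ 2.9 of \cite{DT19}, whereas you give a self-contained derivation directly from the Airy asymptotics in Lemma \ref{lem:airyproperty}. The argument is sound. For $x\geq 0$, the substitution $u=x+\lambda$, the two-sided pointwise bound $\text{Ai}(u)^2\asymp (1+u)^{-1/2}e^{-\frac{4}{3}u^{3/2}}$, and the identity $u^{-1/2}e^{-\frac{4}{3}u^{3/2}}=-\tfrac{1}{2u}\tfrac{d}{du}e^{-\frac{4}{3}u^{3/2}}$ together deliver $J(x)\asymp x^{-1}e^{-\frac{4}{3}x^{3/2}}$ for $x$ bounded away from $0$ after one integration by parts and an absorb-the-remainder step; the compact range $[0,x_0]$ is handled by continuity and strict positivity of $\text{Ai}$ on $[0,\infty)$. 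For $x\leq 0$, the split $\cos^2\theta=\tfrac12+\tfrac12\cos 2\theta$ isolates the $\sqrt{|x|}/\pi$ leading term, and the second integration by parts (writing $\sin(\tfrac43 v^{3/2})=-\tfrac{1}{2v^{1/2}}\tfrac{d}{dv}\cos(\tfrac43 v^{3/2})$) bounds the oscillatory piece uniformly in $M=|x|$, which is exactly what is needed to keep the lower bound from being cancelled. One caveat worth noting: Lemma \ref{lem:airyproperty} states the oscillatory asymptotic in $\sim$ form, which is not quite meaningful near the cosine's zeros; what your argument genuinely requires, and implicitly invokes, is the additive-error form $\text{Ai}(-v)=\pi^{-1/2}v^{-1/4}\big[\cos\big(\tfrac{\pi}{4}-\tfrac23 v^{3/2}\big)+O(v^{-3/2})\big]$, which is available from the same source \cite{AS48}. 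The benefit of your route is that the appendix becomes self-contained; the cost is length, which is evidently why the paper opts for a citation instead.
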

\section{Estimate of the Pfaffian Kernel}
 In this section, we provide various bounds for the entries in the GOE Pfaffian kernel. As a notational convention, we say $f(x) \sim g(x)$ as $x \to a$ (where $a$ can be $\pm \infty$) if $\lim_{x \to a} \frac{f(x)}{g(x)} = 1$. 
\begin{lemma}\label{lem:k12bound}
There exists a constant $C > 0$ such that 
\begin{enumerate}[leftmargin = 20pt, label = (\roman*)]
\item \label{item:k12 xpositive}$\frac{\exp(-\frac{2}{3} x^{\frac{3}{2}})}{C (1+x)^{\frac{1}{4}}} \leq K_{12}(x,x) \leq \frac{C \exp(-\frac{2}{3} x^{\frac{3}{2}})}{(1+x)^{\frac{1}{4}}}\qquad \forall\, x \geq 0,$
\item
\label{item:k12 xnegative}
$0 \leq K_{12}(x, x) \leq C\sqrt{1-x} \hspace{1.9em}  \hspace{5em} \forall\,\ x \leq 0.$ 
\end{enumerate}

\end{lemma}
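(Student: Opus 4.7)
The plan is to start from Definition \ref{def:GOE} specialized to $y=x$, writing
\begin{equation*}
K_{12}(x,x) = T_1(x) + T_2(x), \quad T_1(x) := \tfrac{1}{2}\int_0^\infty \Ai(x+\lambda)^2\,d\lambda, \quad T_2(x) := \tfrac{1}{2}\Ai(x)\int_{-\infty}^x \Ai(\lambda)\,d\lambda,
\end{equation*}
and to estimate $T_1$ and $T_2$ separately via Lemmas \ref{lem:airyproperty}--\ref{lem:airysquareintegral}.

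For part \ref{item:k12 xpositive} ($x\geq 0$), Lemma \ref{lem:airysquareintegral} immediately yields $T_1(x) \asymp (1+x)^{-1}e^{-\frac{4}{3}x^{3/2}}$; since $e^{-\frac{4}{3}x^{3/2}}\le e^{-\frac{2}{3}x^{3/2}}$ and $(1+x)^{-1}\le (1+x)^{-1/4}$ on $[0,\infty)$, this piece sits comfortably below the target scale and contributes harmlessly to both the upper and lower bound. For $T_2(x)$, the asymptotic in Lemma \ref{lem:airyproperty} together with continuity and strict positivity of $\Ai$ on $[0,\infty)$ gives $\Ai(x)\asymp (1+x)^{-1/4}e^{-\frac{2}{3}x^{3/2}}$ uniformly on $[0,\infty)$; and because $\Ai\geq 0$ on that interval, the antiderivative $x\mapsto \int_{-\infty}^x \Ai(\lambda)\,d\lambda$ is non-decreasing on $[0,\infty)$ with value $1/3$ at $x=0$ and limit $1$ at $+\infty$ (Lemma \ref{lem:airyintegral}), hence lies in $[1/3,1]$. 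Multiplying yields $T_2(x)\asymp (1+x)^{-1/4}e^{-\frac{2}{3}x^{3/2}}$, and since $T_1,T_2\geq 0$, summing produces the two-sided bound of part \ref{item:k12 xpositive}.

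For part \ref{item:k12 xnegative} ($x\leq 0$), Lemma \ref{lem:airysquareintegral} gives $T_1(x)\le C(1+\sqrt{|x|})$, and Lemma \ref{lem:airyproperty} combined with continuity yields $|\Ai(x)|\le C(1+|x|)^{-1/4}$ on $(-\infty,0]$. The one delicate step is showing that $F(x):=\int_{-\infty}^x \Ai(\lambda)\,d\lambda$ is bounded on $(-\infty,0]$, since $\Ai$ is only conditionally integrable near $-\infty$. I would handle this by one integration by parts using $\Ai''(\lambda)=\lambda\Ai(\lambda)$: for $\lambda$ sufficiently negative,
\begin{equation*}
\int_a^x \Ai(\lambda)\,d\lambda \;=\; \int_a^x \frac{\Ai''(\lambda)}{\lambda}\,d\lambda \;=\; \left[\frac{\Ai'(\lambda)}{\lambda}\right]_a^x \;+\; \int_a^x \frac{\Ai'(\lambda)}{\lambda^2}\,d\lambda,
\end{equation*}
and the oscillatory/algebraic asymptotics of $\Ai'$ in Lemma \ref{lem:airyproperty} show that both the boundary term and the remaining integrand are $O(|\lambda|^{-3/4})$ as $\lambda\to-\infty$. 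Thus $F$ has a finite limit at $-\infty$ (necessarily $0$) and is bounded on $(-\infty,0]$ by continuity. Therefore $|T_2(x)|\le C$, and $|K_{12}(x,x)|\le T_1(x)+|T_2(x)|\le C(1+\sqrt{|x|})\le C\sqrt{1-x}$.

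The nonnegativity $K_{12}(x,x)\geq 0$ is not forced by the $T_1,T_2$ estimates alone (because $T_2$ can change sign for $x\le 0$), and I would extract it from the point-process interpretation instead. The integrands defining $K_{11}$ and the first two terms of $K_{22}$ are manifestly antisymmetric in $(x,y)$, the next two summands of $K_{22}$ cancel at $y=x$, and $\mathrm{sgn}(0)=0$, so $K_{11}(x,x)=K_{22}(x,x)=0$ and the $2\times 2$ matrix $K(x,x)$ is antisymmetric with off-diagonal entries $\pm K_{12}(x,x)$; its Pfaffian is exactly $K_{12}(x,x)$. By Definition \ref{def:GOE} applied at $L=1$, this Pfaffian equals the one-point correlation density $\rho_1(x)$ of the GOE-Airy point process, which is nonnegative. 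The main obstacle of the whole proof is the uniform boundedness of $F(x)$ on $(-\infty,0]$ noted above; once that is in hand, everything else is a direct substitution of Lemmas \ref{lem:airyproperty}--\ref{lem:airysquareintegral} into the decomposition.
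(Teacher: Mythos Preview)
Your proposal is correct and follows essentially the same route as the paper: decompose $K_{12}(x,x)$ into the squared-Airy integral term and the $\tfrac12\Ai(x)\int_{-\infty}^x\Ai$ term, control each via Lemmas \ref{lem:airyproperty}--\ref{lem:airysquareintegral}, and invoke the point-process interpretation for nonnegativity. Two minor differences worth noting: for the lower bound in part \ref{item:k12 xpositive} the paper splits into $[0,M]$ and $[M,\infty)$ and argues positivity on the compact piece separately, whereas your uniform two-sided estimate $T_2(x)\asymp (1+x)^{-1/4}e^{-\frac23 x^{3/2}}$ on all of $[0,\infty)$ is slightly cleaner; and in part \ref{item:k12 xnegative} you explicitly justify the boundedness of $F(x)=\int_{-\infty}^x\Ai$ on $(-\infty,0]$ via integration by parts with $\Ai''(\lambda)=\lambda\Ai(\lambda)$, a step the paper leaves implicit.
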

\begin{proof}
We first prove \ref{item:k12 xpositive}.
By setting $x = y$ in \eqref{eq:thmk12}, we get 
\begin{equation}\label{eq:temp13}
K_{12} (x, x) = \int_0^{\infty} \text{Ai}(x+\lambda)^2 d\lambda + \frac{1}{2} \text{Ai}(x) \int_{-\infty}^x \text{Ai}(\lambda) d\lambda.
\end{equation}
For the second term in the above display, by Lemma \ref{lem:airyproperty} and Lemma \ref{lem:airyintegral}, we have as $x \to +\infty$
\begin{align*}
\text{Ai}(x) \int_{-\infty}^{x} \text{Ai}(\lambda) d\lambda \sim \frac{e^{-\frac{2}{3} x^{\frac{3}{2}}}}{2 \sqrt{\pi} x^{\frac{1}{4}}}
\end{align*}
Combining this with the first inequality of  Lemma \ref{lem:airysquareintegral}, which controls the first term on the right hand side,  the upper bound in \ref{item:k12 xpositive} naturally follows. To prove the lower bound of \ref{item:k12 xpositive}, due to the above displayed asymptotic and the non-negativity of $\int_0^\infty \text{Ai}(x+\lambda)^2 d\lambda$, there exists constant $M$ and $C$ such that for $x > M$, $$K_{12}(x, x) \geq C^{-1} x^{-\frac{1}{4}}\exp(-\frac{2}{3} x^{\frac{3}{2}}).$$  To conclude the lower bound in \ref{item:k12 xpositive}, it suffices to show that the minimum of $K_{12} (x, x)$ is positive over $[0, M]$ ($K_{12}(x, x)$ is continuous, so admits a minimum). Due to Eq. \eqref{eq:temp13} and Lemma \ref{lem:airyintegral}, we can rewrite $K_{12}(x, x) = \int_{0}^\infty \text{Ai}(x+\lambda)^2 d\lambda + \frac{1}{3} \text{Ai}(x) +  \frac{1}{2}\text{Ai} (x) \int_{0}^x \text{Ai} (\lambda) d\lambda $. Since $\text{Ai}(x)$ is positive for $x \geq 0$, this implies $K_{12}(x, x) > 0$ for all $x > 0$, which completes the proof of the lower bound.
\bigskip
\\
We move on proving \ref{item:k12 xnegative}. The lower bound follows directly since $K_{12} (x, x)$ is the first order correlation function of a Pfaffian point process, thus is negative. For the upper bound, by the asymptotic of $\text{Ai}(x)$ at $-\infty$, there exists constant $C$ such that for all $x \leq 0$,
\begin{equation*}
\bigg|\text{Ai}(x) \int_{-\infty}^{x} \text{Ai}(\lambda) d\lambda\, \bigg| \leq C (1+|x|)^{-\frac{1}{4}}.
\end{equation*}
The result then follows from the second inequality of Lemma \ref{lem:airysquareintegral} and \eqref{eq:temp13}.
\end{proof}
 Recall that we defined $\f_{\alpha, \beta}(x) = C\big( e^{-\alpha x^{\frac{3}{2}}} \mathbf{1}_{\{x \geq 0\}} + (1-x)^\beta \mathbf{1}_{\{x < 0\}}\big)$. 
\begin{lemma}\label{lem:kernelbound}
There exists a constant $C$, such that for all $x, y \in \mathbb{R}$, we have the following upper bounds for the Pfaffian kernel entries:
\begin{enumerate}[leftmargin=20pt, label=(\alph*)]
\item \label{item:k11}$|K_{11}(x, y)| \leq C \big(\f_{\frac{2}{3}, \frac{5}{4}} (x) \wedge \f_{\frac{2}{3}, \frac{3}{4}}(x) \f_{\frac{2}{3}, \frac{3}{4}}(y)\big)$
\item \label{item:k12}
$|K_{12}(x, y)| \leq C \big(\f_{\frac{2}{3}, \frac{3}{4}}(x) \wedge  \f_{0, \frac{3}{4}} (y)\big)$
\item \label{item:k22}
$|K_{22} (x, y)| \leq C \f_{0, \frac{3}{4}} (x) $
\end{enumerate}
\end{lemma}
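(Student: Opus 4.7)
The plan is to combine the pointwise Airy asymptotics from Lemma~\ref{lem:airyproperty} with integration in $\lambda$, integration by parts, and (for $K_{22}$) a Fubini swap in the double integrals. As preliminary estimates I would record
\[
|\Ai(z)| \leq C(1+|z|)^{-1/4} e^{-\frac{2}{3} z_+^{3/2}}, \qquad |\Ai'(z)| \leq C(1+|z|)^{1/4} e^{-\frac{2}{3} z_+^{3/2}},
\]
with $z_+ = \max(z,0)$, together with the uniform bounds $\bigl|\int_{-\infty}^y \Ai(\lambda)\,d\lambda\bigr| \leq C$ and $\bigl|\int_\lambda^\infty \Ai(y+\mu)\,d\mu\bigr| \leq C$, both following from Lemma~\ref{lem:airyintegral} and the super-exponential decay of $\Ai$ at $+\infty$. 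A direct consequence, used repeatedly, is $\int_x^\infty |\Ai(z)|\,dz \leq C\f_{\frac{2}{3},\frac{3}{4}}(x)$ and $\int_x^\infty |\Ai'(z)|\,dz \leq C\f_{\frac{2}{3},\frac{5}{4}}(x)$, obtained by splitting at $0$ and integrating the envelopes (the $(1-x)^{3/4}$ or $(1-x)^{5/4}$ growth for $x \to -\infty$ comes from $\int_0^{|x|}(1+u)^{-1/4}\,du$ or $\int_0^{|x|}(1+u)^{1/4}\,du$).

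For part~(b), I split $K_{12}(x,y)$ via \eqref{eq:thmk12} into the product term $\tfrac{1}{2}\Ai(x)\int_{-\infty}^y \Ai$ and the integral term $\tfrac{1}{2}\int_0^\infty \Ai(x+\lambda)\Ai(y+\lambda)\,d\lambda$. The product term is bounded by $C|\Ai(x)| \leq C\f_{\frac{2}{3},\frac{3}{4}}(x)$ (using $(1-x)^{-1/4} \leq (1-x)^{3/4}$ for $x \leq 0$) and trivially by a constant, hence by $C\f_{0,\frac{3}{4}}(y)$. For the integral term, bounding $|\Ai(y+\lambda)| \leq C$ uniformly yields $C\int_x^\infty|\Ai(z)|\,dz \leq C\f_{\frac{2}{3},\frac{3}{4}}(x)$; analogously, using $|\Ai(x+\lambda)| \leq C$ gives $C\int_y^\infty|\Ai(z)|\,dz \leq C\f_{0,\frac{3}{4}}(y)$.

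For part~(a), integration by parts in $\lambda$ yields
\[
K_{11}(x,y) = -\Ai(x)\Ai(y) - 2\int_0^\infty \Ai'(x+\lambda)\Ai(y+\lambda)\,d\lambda.
\]
The single-variable bound $|K_{11}(x,y)| \leq C\f_{\frac{2}{3},\frac{5}{4}}(x)$ then follows from $|\Ai(y+\lambda)| \leq C$ and the preliminary estimate $\int_0^\infty |\Ai'(x+\lambda)|\,d\lambda \leq C\f_{\frac{2}{3},\frac{5}{4}}(x)$, together with $|\Ai(x)\Ai(y)| \leq C\f_{\frac{2}{3},\frac{5}{4}}(x)$. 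The factorized bound $|K_{11}(x,y)| \leq C\f_{\frac{2}{3},\frac{3}{4}}(x)\f_{\frac{2}{3},\frac{3}{4}}(y)$ comes from Cauchy--Schwarz applied to the same integral, reducing matters to estimating $\|\Ai'(x+\cdot)\|_{L^2([0,\infty))}$. This $L^2$-norm is controlled via the identity
\[
\int_0^\infty \Ai'(x+\lambda)^2\,d\lambda = -\Ai(x)\Ai'(x) - \int_0^\infty (x+\lambda)\Ai(x+\lambda)^2\,d\lambda,
\]
combined with Lemma~\ref{lem:airyproperty} and Lemma~\ref{lem:airysquareintegral}, which yields the sharp envelope $\|\Ai'(x+\cdot)\|_{L^2} \leq C\f_{\frac{2}{3},\frac{3}{4}}(x)$; paired with $\|\Ai(y+\cdot)\|_{L^2} \leq C\f_{\frac{2}{3},\frac{1}{4}}(y) \leq C\f_{\frac{2}{3},\frac{3}{4}}(y)$ this gives the claimed product bound.

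For part~(c), the sign term and the single-integral terms $\pm\tfrac{1}{4}\int_0^\infty \Ai(x+\lambda)\,d\lambda$, $\pm\tfrac{1}{4}\int_0^\infty \Ai(y+\lambda)\,d\lambda$ are bounded by $C\f_{0,\frac{3}{4}}(x)$ directly from the preliminary estimates (the $y$-dependent one is uniformly bounded by Lemma~\ref{lem:airyintegral}). The double integral $\int_0^\infty \bigl(\int_\lambda^\infty \Ai(y+\mu)\,d\mu\bigr)\Ai(x+\lambda)\,d\lambda$ uses the uniform bound on the inner integral to reduce to $C\int_x^\infty|\Ai(z)|\,dz \leq C\f_{0,\frac{3}{4}}(x)$. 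The asymmetric companion $\int_0^\infty \bigl(\int_\lambda^\infty \Ai(x+\mu)\,d\mu\bigr)\Ai(y+\lambda)\,d\lambda$, which has $x$ buried in the inner integral, requires a Fubini swap: after changing the order of integration it becomes $\int_0^\infty \Ai(x+\mu)\bigl(\int_y^{y+\mu} \Ai(z)\,dz\bigr)\,d\mu$, and the uniform bound $\bigl|\int_y^{y+\mu}\Ai\bigr| \leq C$ again reduces the estimate to $C\int_x^\infty|\Ai(z)|\,dz \leq C\f_{0,\frac{3}{4}}(x)$. I expect the main obstacle to be the sharp $L^2$-control of $\Ai'(x+\cdot)$ in part~(a), where the oscillatory part of $\Ai$ in the negative regime must be tracked carefully to avoid losing powers of $|x|$; the Fubini swap needed in part~(c) is the second key subtlety.
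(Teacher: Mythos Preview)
Your proposal is correct and follows essentially the same approach as the paper: Airy asymptotics plus integration by parts for the $\f_{\frac{2}{3},\frac{5}{4}}$ bound in (a), Cauchy--Schwarz for the factorized bound, direct envelope integration for (b), and a Fubini swap for the asymmetric double integral in (c). The only cosmetic differences are that the paper applies Cauchy--Schwarz to the two original terms $A_1,A_2$ of \eqref{eq:thmk11} (before integration by parts) and bounds $\int_y^\infty \Ai'(\lambda)^2\,d\lambda$ by directly integrating the asymptotic envelope rather than via the Airy-ODE identity you use, and in (c) the paper writes the Fubini step as the symmetrization $\int_0^\infty \Ai(y+\lambda)\bigl(\int_\lambda^\infty \Ai(x+\mu)\,d\mu\bigr)d\lambda = \bigl(\int_0^\infty \Ai(x+\cdot)\bigr)\bigl(\int_0^\infty \Ai(y+\cdot)\bigr) - \int_0^\infty \Ai(x+\lambda)\bigl(\int_\lambda^\infty \Ai(y+\mu)\,d\mu\bigr)d\lambda$ rather than your direct swap---both are equivalent and equally short.
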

\begin{proof}	
For \ref{item:k11}, it suffices to show that $|K_{11}(x, y)| \leq C \f_{\frac{2}{3}, \frac{5}{4}} (x)$ and  $|K_{11}(x, y)| \leq C\f_{\frac{2}{3}, \frac{3}{4}}(x) \f_{\frac{2}{3}, \frac{3}{4}}(y)$.
Recall the expression of $K_{11}(x, y)$ from \eqref{eq:thmk11}. Using integration by parts for the right hand side of  \eqref{eq:thmk11}, we get
$K_{11}(x, y) = \text{Ai}(x) \text{Ai}(y) -  2\int_0^{\infty} \text{Ai}(y + \lambda) \text{Ai}'(x+\lambda) d\lambda.$ This implies that $|K_{11}(x, y)| \leq |\text{Ai}(x) \text{Ai}(y)| + 2 \int_0^\infty |\text{Ai}(y + \lambda) \text{Ai}'(x+ \lambda)| d\lambda$. Since $|\text{Ai}(x)|$ is a bounded function, there exists constant $C$ such that
\begin{align*}
|K_{11}(x, y)| \leq C|\text{Ai}(x)| + C \int_0^{\infty} |\text{Ai}'(x+\lambda)| d\lambda =  C + C \int_x^{\infty} |\text{Ai}'(\lambda)| d\lambda.
\end{align*}
To obtain the upper bound for $|\text{Ai}(x)|$ and $\int_x^{\infty} |\text{Ai}(\lambda)| d\lambda$, it suffices to look at their behavior as $x \to \pm \infty$. The asymptotic  $\text{Ai}'(x)$ at $\pm \infty$ is specified in Lemma \ref{lem:airyproperty}. Therefore, 
\begin{equation*}
\int_x^\infty |\text{Ai}'(\lambda)| d\lambda \leq C e^{-\frac{2}{3} x^{\frac{3}{2}}}, \text{ if } x \geq 0; \qquad \int_x^\infty |\text{Ai}'(\lambda)| d\lambda \leq C(1 - x)^{\frac{5}{4}} \text{ if } x \leq 0. 
\end{equation*}
This implies that $|K_{11}(x, y)| \leq C \f_{\frac{2}{3}, \frac{5}{4}} (x)$.
In addition, since $$K_{11}(x, y) = \int_0^{\infty} \text{Ai}(x+\lambda) \text{Ai}'(y+\lambda) d\lambda - \int_0^{\infty} \text{Ai}'(x+\lambda) \text{Ai}(y+\lambda) d\lambda = A_1 - A_2$$ By Cauchy Schwartz inequality, 
\begin{equation*}	
A_1^2 \leq \int_0^{\infty} \text{Ai}(x + \lambda)^2 d\lambda \int_0^{\infty} \text{Ai}'(y + \lambda)^2 d\lambda = \int_x^{\infty} \text{Ai}(\lambda)^2 d\lambda \int_{y}^{\infty} \text{Ai}'(\lambda)^2 d\lambda.
\end{equation*}
By Lemma \ref{lem:airyproperty}, $\Ai(x)^2$ decays asymptotically as $\exp(-\frac{4}{3} x^{\frac{3}{2}})$ as $x \to +\infty$ and is asymptotically upper bounded by $|x|^{-\frac{1}{2}}$ as $x \to -\infty$. This implies that  $\int_x^{\infty} \text{Ai}(\lambda)^2 d\lambda \leq C F_{\frac{4}{3}, \frac{1}{2}} (x)$. Similarly, $\text{Ai}'(y)^2$ decays asymptotically as $\exp(-\frac{4}{3} y^{\frac{3}{2}})$ and is asymptotically upper bounded by $|y|^{\frac{1}{2}}$, we get $\int_y^{\infty} \text{Ai}'(\lambda)^2 d\lambda \leq C F_{\frac{4}{3}, \frac{3}{2}}(y)$. As a result, 
\begin{align*}
|A_1| &\leq \Big(\int_x^{\infty} \text{Ai}(\lambda)^2 d\lambda\Big)^{\frac{1}{2}} \Big(\int_y^{\infty} \text{Ai}'(\lambda)^2 d\lambda\Big)^{\frac{1}{2}} \leq C \f_{\frac{2}{3}, \frac{1}{4} }(x) \f_{\frac{2}{3}, \frac{3}{4} }(y) \leq C \f_{\frac{2}{3}, \frac{3}{4} }(x) \f_{\frac{2}{3}, \frac{3}{4} }(y).
\end{align*}
For the second inequality above, we use the property that $\sqrt{\f_{\alpha, \beta}} = F_{\alpha/2, \beta/2} $ and for the third inequality, $\f_{\alpha, \beta}(x)$ is increasing in $\beta$. Interchanging the role of $x$ and $y$, we also have  $|A_2| \leq  C \f_{\frac{2}{3}, \frac{3}{4} }(x) \f_{\frac{2}{3}, \frac{3}{4} }(y)$. Therefore, the same upper bound holds for $|K_{11} (x, y)|$ and we conclude the proof of \ref{item:k11}.
\bigskip
\\
We move on showing \ref{item:k12}. We will prove $|K_{12}(x, y)| \leq C \f_{\frac{2}{3}, \frac{3}{4}}(x)$ and  $|K_{12}(x, y)| \leq C  \f_{0, \frac{3}{4}} (y)$ respectively.
Recall
$K_{12} (x, y)$ from \eqref{eq:thmk12}. Note that both $|\text{Ai}(y + \lambda)|$ and  $|\int_{-\infty}^y \text{Ai}(\lambda) d\lambda|$ are bounded function of $y$ (see Lemma \ref{lem:airyintegral}), by using triangle inequality,
\begin{align*}
|K_{12} (x, y)| \leq \frac{1}{2} \int_0^{\infty} |\text{Ai}(x+\lambda) \text{Ai} (y+\lambda)| d\lambda + \frac{1}{2} |\text{Ai}(x)| \cdot \Big|\int_{-\infty}^y \text{Ai}(\lambda) d\lambda\Big|
\leq C \int_x^{\infty} |\text{Ai}(\lambda)| d\lambda + C |\text{Ai}(x)|.
\end{align*}
By the asymptotic of $\text{Ai}(x)$ at $\pm \infty$, (use the similar approach as in part \ref{item:k11}), we see that $|K_{12} (x, y)| \leq C \f_{\frac{2}{3}, \frac{3}{4}} (x).$
\smallskip
We proceed to obtain a different upper bound for $K_{12}$. Referring to the right hand side of the first inequality in the above display and upper bounding $|\text{Ai}(x +\lambda)|$ and $|\frac{1}{2} \text{Ai}(x) \int_{-\infty}^y \text{Ai}(\lambda) d\lambda|$ by a constant, we find that
\begin{align*}
|K_{12}(x, y)| 
\leq C \int_0^{\infty} |\text{Ai}(y+\lambda)| d\lambda + C \leq C \f_{0, \frac{3}{4}}(y).
\end{align*}
This concludes our proof of \ref{item:k12}.
\bigskip
\\
Finally, let us demonstrate \ref{item:k22}. Recall from \eqref{eq:thmk22} that
\begin{align*}
K_{22} (x, y) &= \frac{1}{4} \int_0^\infty \text{Ai}(x + \lambda) \Big(\int_\lambda^\infty  \text{Ai}(y+\mu)d\mu\Big) d\lambda   - \frac{1}{4} \int_0^{\infty} \text{Ai}(y+ \lambda) \Big(\int_{\lambda}^{\infty} \text{Ai}(x+\mu)  d \mu\Big) d\lambda \\
\numberthis \label{eq:prop6 k22} 
& \quad - \frac{1}{4} \int_0^\infty \text{Ai}(x+\lambda) d\lambda + \frac{1}{4} \int_0^{\infty} \text{Ai}(y+\lambda) d\lambda - \frac{\text{sgn}(x-y)}{4}
\end{align*}
and recall that $\text{sgn}$ is the sign function. By Fubini's theorem,
\begin{align*}
\int_0^\infty\!\! \text{Ai}(y+\lambda) \Big(\int_{\lambda}^\infty \!\!  \text{Ai}(x+\mu)d\mu\Big)  d\lambda   = \Big(\int_0^{\infty} \!\! \text{Ai}(x+\lambda) d\lambda\Big) \Big(\int_0^{\infty} \!\! \text{Ai}(y+\lambda) d\lambda\Big) - \int_0^\infty\!\! \text{Ai}(x + \lambda)\Big(\int_\lambda^\infty \!\! \text{Ai}(y + \mu) d\mu\Big)  d\lambda 
\end{align*}
Replacing the term $\int_{0}^\infty \text{Ai}(y + \lambda) \big(\int_{\lambda}^{\infty} \text{Ai}(x+ \mu) d\mu\big) d\lambda$ in \eqref{eq:prop6 k22} with the right hand side in the above display, 
\begin{align*}
K_{22} (x, y)  &= \frac{1}{2} \int_0^\infty \text{Ai}(x + \lambda)  \Big(\int_{\lambda}^\infty  \text{Ai}(y + \mu)d\mu \Big)d\lambda - \frac{1}{4} \Big(\int_0^{\infty}  \text{Ai}(x+\lambda) d\lambda\Big) \Big(\int_0^{\infty}  \text{Ai}(y+\lambda) d\lambda\Big)\\
&\quad -\frac{1}{4} \int_0^{\infty} \text{Ai}(x+\lambda) d\lambda + \frac{1}{4} \int_0^\infty \text{Ai}(y + \lambda) d\lambda  - \frac{\text{sgn}(x-y)}{4}.
\end{align*}
We know that $\big|\int_0^{\infty} \text{Ai}(x+\lambda) d\lambda\big|, \big|\int_0^{\infty} \text{Ai}(y+\lambda) d\lambda\big|$ can upper bounded by a constant. Applying triangle inequality to the above display,
$$|K_{22} (x, y)| \leq C \int_0^{\infty} |\text{Ai}(x + \lambda)| d\lambda + C.$$
Using the asymptotic of $\Ai(x)$ at $\pm \infty$ in Lemma \ref{lem:airyproperty}, we find that $|K_{22}(x, y) | \leq C \f_{0, \frac{3}{4}} (x)$, thus conclude \ref{item:k22}.
\end{proof}
\bibliographystyle{alpha}
\bibliography{ref.bib}
\end{document}